\newtheorem {theorem}{Theorem}
\newtheorem {lemma}[theorem]{Lemma}
\newtheorem {proposition}[theorem]{Proposition}
\newtheorem {corollary}[theorem]{Corollary}
\newtheorem {definition}[theorem]{Definition}
\theoremstyle{remark}
\newtheorem {remark}[theorem]{Remark}
\newtheorem {example}[theorem]{Example}
\numberwithin{equation}{section}
\numberwithin{theorem}{section}
\theoremstyle{definition}
\newlist{pcases}{enumerate}{1}
\setlist[pcases]{
  label=\bf{Case~\arabic*:}\protect\thiscase.~,
  ref=\arabic*,
  align=left,
  labelsep=0pt,
  leftmargin=0pt,
  labelwidth=0pt,
  parsep=0pt
}
\newcommand{\case}[1][]{%
  \if\relax\detokenize{#1}\relax
    \def\thiscase{}%
  \else
    \def\thiscase{~#1}%
  \fi
  \item
}
\newcommand{\ZZ}{\mathbb{Z}}
\newcommand{\Z}{\mathbb{Z}}
\newcommand{\R}{\mathbb{R}}
\newcommand{\C}{\mathbb{C}}
\newcommand{\Q}{\mathbb{Q}}
\newcommand{\RP}{\mathbb{RP}}
\newcommand{\re}{\operatorname{Re}}
\newcommand{\coker}{\operatorname{coker}}
\DeclareMathOperator{\Hom}{Hom}
\DeclareMathOperator{\tr}{tr}
\DeclareMathOperator{\Aut}{Aut}
\newcommand{\Id}{\mathrm{Id}}
\newcommand{\dcover}{\Sigma_2}
\newcommand{\llangle}{\langle \langle}
\newcommand{\rrangle}{\rangle \rangle}
\newcommand{\norm}[1]{\| #1 \|}
\newcommand{\bgnorm}[1]{\{ #1 \}}
\tikzset{every picture/.style=thick}
\tikzset{baseline=-\the\dimexpr\fontdimen22\textfont2\relax}
\tikzset{link/.style = { white, double = black, line width = 1.75pt, double distance = 1.25pt, looseness=1.75 }}
\tikzset{blacklabel/.style={draw, fill=black!20, font=\tiny, circle, inner sep = 0.075cm}}
\title{A menagerie of $SU(2)$-cyclic 3-manifolds}
\date{}
\author{Steven Sivek}
\address{Department of Mathematics \\ Imperial College London}
\email{s.sivek@imperial.ac.uk}
\author{Raphael Zentner}
\address{Universit\"{a}t Regensburg}
\email{raphael.zentner@mathematik.uni-regensburg.de}
\begin{document}

\begin{abstract}
We classify $SU(2)$-cyclic and $SU(2)$-abelian 3-manifolds, for which every representation of the fundamental group into $SU(2)$ has cyclic or abelian image respectively, among geometric 3-manifolds which are not hyperbolic.  As an application, we give examples of hyperbolic 3-manifolds which do not admit degree-1 maps to any Seifert fibered manifold other than $S^3$ or a lens space.  We also produce infinitely many one-cusped hyperbolic manifolds with at least four $SU(2)$-cyclic Dehn fillings, one more than the number of cyclic fillings allowed by the cyclic surgery theorem.
\end{abstract}

\maketitle

\section{Introduction}

This paper is devoted to studying a class of 3-manifolds which are as simple as possible from the perspective of instanton gauge theory.

\begin{definition} \label{def:su2-cyclic}
A 3-manifold $Y$ is \emph{$SU(2)$-cyclic} if every representation $\rho: \pi_1(Y) \to SU(2)$ has cyclic image,  and it is \emph{$SU(2)$-abelian} if every $\rho$ has abelian image.
\end{definition}

Certainly $SU(2)$-cyclic 3-manifolds are $SU(2)$-abelian, and it is not hard to see that the two notions coincide for rational homology spheres.  Note that $T^3$ is $SU(2)$-abelian but not $SU(2)$-cyclic, however, because there are representations
\[ \pi_1(T^3) \cong \Z^3 \to SU(2) \]
which send the generators of the three $\Z$ summands to elements $\left(\begin{smallmatrix} e^{i\theta_j} & 0 \\ 0 & e^{-i\theta_j}\end{smallmatrix}\right)$ for some constants $\theta_1,\theta_2,\theta_3$ which are rationally independent in $\R/2\pi\Z$.

Questions about $SU(2)$-cyclic 3-manifolds go back at least to Kirby's problem list \cite{kirby-list}, in which Problem 3.105(A) asks whether any such homology spheres exist other than $S^3$.  Kronheimer and Mrowka's proof of the Property P conjecture \cite{km-p} and their closely related work \cite{km-su2} established that many surgeries on nontrivial knots in $S^3$ are not $SU(2)$-cyclic.  Lin \cite{lin} proved an inequality relating the slopes of two $SU(2)$-cyclic surgeries on a knot in $S^3$, Baldwin and the first author \cite{bs-stein} gave an obstruction to being $SU(2)$-cyclic in terms of Stein fillings, and the second author showed that the splicing of any two non-trivial knots in $S^3$ is not $SU(2)$-cyclic in \cite{zentner}. 

In this paper, we search systematically for examples of $SU(2)$-abelian $3$-manifolds, and in particular those with geometric structures.  Our first result is the following.

\begin{theorem} \label{thm:su2-abelian-sfs}
Let $Y$ be a closed, orientable, Seifert fibered 3-manifold.  Then $Y$ is $SU(2)$-abelian if and only if one of the following holds:
\begin{itemize}
\item $Y$ is $S^3$, a lens space, $S^1\times S^2$, or $T^3$, hence $\pi_1(Y)$ is abelian.
\item $Y$ is $\RP^3\#\RP^3$.
\item $Y$ has base orbifold $S^2(2,4,4)$.
\item $Y$ has base orbifold $S^2(3,3,3)$, and $|H_1(Y;\Z)|$ is either even or infinite.
\item $Y$ is a circle bundle over $T^2$ with nonzero, even Euler number.
\end{itemize}
\end{theorem}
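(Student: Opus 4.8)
The plan is to translate the condition into the nonexistence of irreducible representations and then sort through the possible base orbifolds. The first point is purely group-theoretic: every abelian subgroup of $SU(2)$ is contained in a maximal torus, since the only elements of order dividing $2$ are $\pm 1$, and the centralizer of any noncentral element is the maximal torus through it. Hence a representation $\rho\colon \pi_1(Y)\to SU(2)$ has abelian image if and only if it is reducible, i.e.\ conjugate into the diagonal torus. Thus $Y$ is $SU(2)$-abelian precisely when $\pi_1(Y)$ admits \emph{no} irreducible $SU(2)$-representation, and the whole problem becomes one of deciding when such a representation exists.

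To detect irreducible representations I would use the Seifert structure. Writing $h$ for the class of the regular fiber (central when the base orbifold $B$ is orientable), there is an extension
\[ 1 \to \langle h\rangle \to \pi_1(Y) \to \pi_1^{\mathrm{orb}}(B) \to 1 . \]
If $\rho$ is irreducible then its image is nonabelian, so its centralizer is $\{\pm 1\}$; as $\rho(h)$ is central in the image we get $\rho(h)=\pm 1$. When $\rho(h)=1$ the representation factors through $\pi_1^{\mathrm{orb}}(B)$, and when $\rho(h)=-1$ it factors through the $\Z/2$-central extension of $\pi_1^{\mathrm{orb}}(B)$ prescribed by the Seifert data (equivalently, $\rho$ descends to an $SO(3)$-representation of $B$ that fails to lift across the fiber). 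So $Y$ is $SU(2)$-abelian iff neither of these two ``levels'' carries an irreducible $SU(2)$-representation, a question about the $2$-orbifold $B$ together with the mod-$2$ reductions of its Seifert invariants.

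I would then run through the orbifolds $B$. If the underlying surface has positive genus, or $B$ is suitably nonorientable, then $\pi_1^{\mathrm{orb}}(B)$ surjects onto a free or surface group and irreducible representations are plentiful, so $Y$ is not $SU(2)$-abelian; the single borderline case is $B=T^2$ with no cone points, where $\pi_1(Y)=\langle x,y,h \mid [x,h]=[y,h]=1,\ [x,y]=h^{e}\rangle$ for the Euler number $e$. There any irreducible $\rho$ satisfies $[\rho(x),\rho(y)]=\rho(h)^{e}=(-1)^{e}$, which has a quaternionic solution exactly when $e$ is odd; this isolates the circle bundles over $T^2$ with nonzero even Euler number (with $T^3$ at $e=0$). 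For $B=S^2$ with at most two cone points $\pi_1(Y)$ is cyclic, giving $S^3$, a lens space, or $S^1\times S^2$, and the reducible case $\RP^3\#\RP^3$ is handled directly since its infinite dihedral $\pi_1$ sends every involution to $\pm 1$. The decisive case is $B=S^2(a_1,a_2,a_3)$, where $\pi_1^{\mathrm{orb}}(B)$ is a triangle group: an irreducible triple $\rho(x_1),\rho(x_2),\rho(x_3)$ with central product exists iff one can choose admissible rotation angles $\theta_i$, constrained by $a_i\theta_i\equiv b_i\pi \pmod{2\pi}$ from $x_i^{a_i}=h^{b_i}$ together with the choice $\rho(h)=\pm1$, satisfying the strict spherical triangle inequalities $|\theta_i-\theta_j|<\theta_k<\theta_i+\theta_j$ and $\theta_1+\theta_2+\theta_3<2\pi$.

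Finally I would carry out the triangle analysis. For hyperbolic triples ($\tfrac1{a_1}+\tfrac1{a_2}+\tfrac1{a_3}<1$) there is enough room, over the two choices of $\rho(h)$ and the available lifts, to meet the strict inequalities, so these $Y$ are never $SU(2)$-abelian; for the spherical triples $(2,2,n),(2,3,3),(2,3,4),(2,3,5)$ the group $\pi_1(Y)$ is binary polyhedral and has an irreducible $2$-dimensional representation, so again $Y$ is not $SU(2)$-abelian. This leaves the three Euclidean triples, which I would settle by computing the admissible angle sets: $(2,4,4)$ admits no strict solution for either value of $\rho(h)$ and is always $SU(2)$-abelian; $(2,3,6)$ always admits one and is never $SU(2)$-abelian; and $(3,3,3)$ admits a strict solution exactly when the $\rho(h)=-1$ lift is realizable, a condition I would match to the parity of $|H_1(Y;\Z)|$. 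I expect the main obstacle to be precisely this bookkeeping: determining uniformly in the Seifert invariants which rotation numbers become admissible once $\rho(h)=-1$ is forced, confirming that the resulting inequalities are strict rather than degenerate in the borderline Euclidean and small hyperbolic cases, and translating realizability of the $\rho(h)=-1$ lift for $(3,3,3)$ into the stated homological parity.
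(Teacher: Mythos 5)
Your overall skeleton --- reduce irreducibility to $\rho(h)=\pm1$, pass to the orbifold group or its $\Z/2$-central extension, and decide existence of irreducible triples via strict triangle inequalities on rotation angles --- is the same as the paper's (Lemma~\ref{lem:triangle-h-image} and Lemma~\ref{lem:triangle}). But there is a genuine gap: your case analysis of base orbifolds skips $S^2$ with four or more cone points entirely (you pass from ``$S^2$ with at most two cone points'' straight to ``the decisive case $S^2(a_1,a_2,a_3)$''). This case does not follow from the triangle case, and it is where a substantial part of the paper's proof lives (Theorem~\ref{thm:triangle-reps} for $n\geq 4$ and Theorem~\ref{thm:large-seifert-fibered}). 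The difficulty is that for bases $S^2(2,2,\dots,2,p,q)$ --- which include the Euclidean orbifold $S^2(2,2,2,2)$ as well as hyperbolic examples --- the orbifold fundamental group admits \emph{no} non-abelian $SU(2)$-representation (Lemma~\ref{lem:triangle-start-2}), so your $\rho(h)=1$ level is empty and one must produce representations of the $\Z/2$-central extension, which depends on the Seifert invariants $\beta_i$. The paper does this by induction on the number of fibers (setting $\rho(c_1)=j$, $\rho(c_2)=k$ and splicing in a representation of a smaller manifold), with hand-crafted base cases such as $(2,2,4,4)$ and explicit representations of $\Delta(2,3,3,3)$ and $\Delta(3,3,3,3)$; nothing in your proposal supplies a mechanism for this, yet the theorem implicitly asserts that none of these manifolds is $SU(2)$-abelian.

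A second problem is that the ``bookkeeping'' you defer at the end is not peripheral: it is the core of the argument, and in one place your summary prejudges it. You assert that for hyperbolic triples there is ``enough room'' to satisfy the strict inequalities; but for hyperbolic triples of the form $(2,p,q)$ the orbifold group again has only abelian representations (Lemma~\ref{lem:triangle-start-2}), so everything rests on the constrained-angle analysis at the $\rho(h)=-1$ level (Proposition~\ref{prop:some-alpha-is-2}), and that analysis --- choosing $\theta_j$ as close to $\pi/2$ as the parity of $\beta_j$ allows and checking strictness --- is precisely where $(2,4,4)$ emerges as the unique exception; claiming ``enough room'' without carrying it out begs the very question being decided. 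Two smaller gaps of the same kind: $T^2$ with cone points is not covered by your ``surjects onto a free or surface group'' argument (killing the cone points leaves the abelian group $\Z^2$); the paper instead builds representations directly, using the fact that every element of $SU(2)$ is a commutator (Proposition~\ref{prop:sfs-over-t2}). And $\RP^2$ with cone points is absent from your list --- $\pi_1(\RP^2)=\Z/2\Z$ is abelian, so no surjection argument applies there either, and the paper needs Proposition~\ref{prop:rp2-n-large} together with a double-cover argument to show that the only $SU(2)$-abelian examples over $\RP^2$ are lens spaces and $\RP^3\#\RP^3$.
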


\begin{remark}
The only manifolds $Y$ in Theorem~\ref{thm:su2-abelian-sfs} for which $H_1(Y;\Z)$ is cyclic are $S^3$, lens spaces, and $S^1\times S^2$.  In particular, an $SU(2)$-cyclic surgery on a nontrivial knot in $S^3$ has cyclic $H_1$, so if it is Seifert fibered then it must be a lens space.
\end{remark}

\begin{remark}
In the case where the base is $S^2(3,3,3)$ and $|H_1(Y;\Z)|$ is even, we further show (Proposition~\ref{prop:non-cyclically-finite}) that $\pi_1(Y)$ is not \emph{cyclically finite}: $Y$ has a normal cover of degree 3 which is a circle bundle over $T^2$, hence not a rational homology sphere.
\end{remark}

Seifert fibered spaces comprise six of the eight geometric structures on 3-manifolds, and a seventh is not so hard to understand either.

\begin{theorem}[Theorem~\ref{thm:count-sol-bundles}]
There are exactly three closed, orientable $\mathrm{Sol}$-manifolds which are $SU(2)$-abelian, and all of them are $T^2$-bundles over $S^1$.
\end{theorem}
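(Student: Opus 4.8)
The plan is to treat the two families of closed orientable $\mathrm{Sol}$-manifolds separately: the torus bundles over $S^1$ (mapping tori of Anosov diffeomorphisms) and the torus semi-bundles (unions of two orientable twisted $I$-bundles over the Klein bottle), which together exhaust them. For a torus bundle $Y_A$ with monodromy $A \in SL_2(\Z)$, $|\tr A| > 2$, write $\pi_1(Y_A) = \langle a, b, t \mid [a,b],\, tat^{-1} = \phi_A(a),\, tbt^{-1}=\phi_A(b)\rangle$ with $\phi_A$ the automorphism of $\Z^2 = \langle a,b\rangle$ given by $A$. Since every abelian subgroup of $SU(2)$ lies in a maximal torus, any $\rho:\pi_1(Y_A) \to SU(2)$ sends $\Z^2$ into a maximal torus $T$; using that the centralizer of a noncentral element is its unique maximal torus, I would show that either $\rho(\Z^2) \subseteq \{\pm1\}$ (forcing abelian image) or $\rho(t)\in N(T)$, in which case $\rho(t)$ either lies in $T$ (abelian image) or acts on $T$ by inversion.

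The key reduction is then that $Y_A$ fails to be $SU(2)$-abelian exactly when $\rho(t)$ inverts $T$ and $\rho(\Z^2)$ is noncentral. Imposing the relations in the inverting case forces $\rho|_{\Z^2}$ to kill $\Img(A+I)$, so $\rho|_{\Z^2}$ is a character of $\coker(A+I)$, and such a $\rho$ has noncentral (hence nonabelian) image iff this character has order $>2$. Thus $Y_A$ is $SU(2)$-abelian iff $\coker(A+I)$ is an elementary abelian $2$-group. As $\det(A+I) = \tr A + 2$ and $\coker(A+I)$ is a quotient of $\Z^2$, its order $|\tr A + 2|$ must lie in $\{1,2,4\}$; with $|\tr A| \ge 3$ this leaves only $\tr A \in \{-3,-4,-6\}$ (positive traces give order $\ge 5$), with the extra requirement that $\coker(A+I)\cong(\Z/2)^2$ rather than $\Z/4$ when $\tr A = -6$.

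The bookkeeping step is to enumerate conjugacy classes of Anosov $A$ (up to $GL_2(\Z)$-conjugacy and inversion, the equivalence governing unoriented homeomorphism) at these traces. Using the cyclic-word/continued-fraction description of hyperbolic conjugacy classes, I expect exactly one class for $\tr A = -3$ and one for $\tr A = -4$ (automatically of order $1$ and $2$), and two classes for $\tr A = -6$, of which precisely one satisfies $\coker(A+I) \cong (\Z/2)^2$ (equivalently $A \equiv I \bmod 2$). These three manifolds are pairwise distinct, since $H_1 = \Z \oplus \coker(A-I)$ has torsion of order $|\tr A - 2| \in \{5,6,8\}$, giving exactly three $SU(2)$-abelian torus bundles.

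Finally, for a torus semi-bundle $Y = K_1 \cup_{T^2} K_2$ I would construct a nonabelian representation directly. On each Klein-bottle group $\langle x_i, y_i \mid x_i y_i x_i^{-1} = y_i^{-1}\rangle$ one may send $x_i$ into $N(T)\setminus T$ (so that $\rho(x_i)^2 = -1$ automatically) and $y_i$ into $T$; the boundary torus $\langle x_i^2, y_i\rangle$ then maps with $x_i^2 \mapsto -1$ and $y_i \mapsto e^{i\theta_i}$. Matching across the gluing $G \in GL_2(\Z)$ gives a small system of congruences mod $\pi$ in $\theta_1,\theta_2$ carrying a sign twist from the $-1$'s, and because the $\mathrm{Sol}$ gluing is hyperbolic (the fiber slopes are exchanged nontrivially) I expect this system always to admit a solution with some $\theta_i \notin \pi\Z$, yielding a nonabelian image in $N(T)$. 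I anticipate the two genuine obstacles to be (i) the $\tr A = -6$ bookkeeping, where one must separate $\Z/4$ from $(\Z/2)^2$ among same-trace classes, and (ii) this semi-bundle construction, where degenerate gluing slopes must be handled by reassigning which generators are sent into $N(T)\setminus T$.
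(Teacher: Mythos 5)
Your skeleton is the same as the paper's: the same dichotomy from Scott between torus bundles with hyperbolic monodromy and unions of two twisted $I$-bundles over the Klein bottle, and for the torus bundles your reduction is correct and equivalent to the paper's Lemma~\ref{lem:torus-bundle-su2} and Proposition~\ref{prop:sol-torus-bundle}: your criterion that $\coker(A+\Id)$ be an elementary abelian $2$-group is the same as the paper's criterion that $2(A+\Id)$ be $(\tr A+2)$ times an integer matrix, and both yield $\tr A\in\{-3,-4\}$, or $\tr A=-6$ with $A\equiv\Id\pmod 2$. The two steps you flag as obstacles, however, are genuine gaps, and one of them rests on a claim that is false as stated.

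For the semi-bundles, you assert that the matching congruences ``always admit a solution with some $\theta_i\notin\pi\Z$.'' They need not. Writing the gluing as $x_2^2=(x_1^2)^m y_1^n$ and $y_2=(x_1^2)^p y_1^q$, your first matching condition reads $(-1)^m e^{in\theta_1}=-1$; when $0<|n|\leq 2$ --- and nothing in the $\mathrm{Sol}$ condition rules this out --- every solution has $\theta_1\in\pi\Z$ (for $|n|=2$ use that $m$ is odd, since $\gcd(m,n)=1$), and then $\theta_2\in\pi\Z$ as well, so both torus generators land in $\{\pm1\}$. A non-abelian representation still exists, but not by your mechanism, and not by ``reassigning which generators are sent into $N(T)\setminus T$'' (the Klein bottle relation forces $x_i$ into $N(T)\setminus T$ exactly when $\rho(y_i)\neq\pm1$, so there is nothing to reassign). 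The correct repair is to notice that the gluing constrains $\rho$ only on the common torus $\langle x_1^2,y_1\rangle$, while $\rho(x_1)$ and $\rho(x_2)$ are each constrained only to square to $-1$ and to invert $\rho(y_1),\rho(y_2)\in\{\pm1\}$, which is automatic; so one may take $\rho(x_1)=i$ and $\rho(x_2)=j$, which do not commute. This is precisely the paper's Proposition~\ref{prop:sol-twisted}, which shows that \emph{every} such union, regardless of the gluing, surjects onto the quaternion group $Q$ of order $8$, so no semi-bundle is $SU(2)$-abelian.

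The enumeration of monodromies is likewise left as an expectation rather than a proof, and it is where the paper does its hardest bookkeeping. Your expected counts are correct, but establishing them is exactly the content of Proposition~\ref{prop:count-quadratic-forms}: the paper uses the Chowla--Cowles--Cowles bijection between trace-$\tau$ conjugacy classes in $SL(2,\Z)$ and classes of binary quadratic forms of discriminant $\tau^2-4$, together with the class numbers $\bar{h}(5)=\bar{h}(8)=1$ and $\bar{h}(12)=2$ (the two trace $-4$ classes, $A_{-4}$ and its transpose, being identified in $GL(2,\Z)$ though not in $SL(2,\Z)$). In particular, the delicate point you isolate at trace $-6$ --- separating the classes with $\coker(A+\Id)\cong(\Z/2)^2$ from those with $\Z/4$ --- is resolved there by observing that $A\equiv\Id\pmod 2$ if and only if the associated form has all even coefficients, i.e.\ is twice a form of discriminant $8$, whence the relevant count is $\bar{h}(8)=1$. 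Your continued-fraction route could surely be carried out (it is essentially the reduction theory of these same indefinite forms), and your $H_1$-torsion computation (orders $5$, $6$, $8$) correctly distinguishes the three manifolds --- a point the paper leaves implicit --- but as written this step is a placeholder rather than an argument.
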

In Theorem~\ref{thm:count-sol-bundles} we also explicitly determine the monodromies of these bundles up to conjugacy.

We know very little about the remaining case, namely hyperbolic manifolds, but we can prove that some closed hyperbolic 3-manifolds are $SU(2)$-cyclic by the following, which was claimed without proof in \cite{zentner-simple} and builds on work of Cornwell \cite{cornwell}.

\begin{theorem}[Theorem~\ref{thm: bridge number 3}]
If $K \subset S^3$ is an $SU(2)$-simple knot (see Definition~\ref{def:su2-simple}) of bridge number at most 3, then its branched double cover $\dcover(K)$ is $SU(2)$-cyclic. In particular, this proves that the hyperbolic manifolds $\dcover(8_{18})$ and $\dcover(10_{109})$ are both $SU(2)$-cyclic. 
\end{theorem}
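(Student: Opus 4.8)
The plan is to show directly that $\pi_1(\dcover(K))$ admits no irreducible $SU(2)$ representation. Since $\dcover(K)$ is a rational homology sphere with $|H_1(\dcover(K);\Z)| = |\Delta_K(-1)|$ odd, the two notions $SU(2)$-cyclic and $SU(2)$-abelian coincide here, and every abelian representation automatically has cyclic image; so the absence of irreducible representations is exactly what we want. When $K$ has bridge number $1$ or $2$ this is immediate, because $\dcover(K)$ is then $S^3$ or a lens space, which are manifestly $SU(2)$-cyclic. Thus the real content is the case of bridge number exactly $3$, and I would spend the rest of the argument ruling out irreducible representations there.

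The main tool is the standard dictionary between representations of $\pi_1(\dcover(K))$ and trace-free representations of the knot group, organized by the deck transformation $\tau$. Let $X = \dcover(K)\ssm\widetilde{K}$ be the complement of the branch locus, so that $\pi_1(X)$ is the index-two subgroup $\ker\big(\pi_1(S^3\ssm K)\to\Z/2\big)$ and $\pi_1(\dcover(K)) = \pi_1(X)/\langle\langle \mu^2\rangle\rangle$, where $\mu$ is a meridian. Given an irreducible $\sigma\colon \pi_1(\dcover(K))\to SU(2)$, I would pass to the adjoint representation $\bar\sigma\colon \pi_1(\dcover(K))\to SO(3)$ and split into two cases according to whether $\bar\sigma$ is isomorphic to its conjugate $\bar\sigma^\tau = \bar\sigma\circ\tau_*$.

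If $\bar\sigma\cong\bar\sigma^\tau$, then the conjugating element realizes the action of $\mu$ and lets me extend $\bar\sigma$ to an $SO(3)$ representation of the orbifold group $\pi_1^{\mathrm{orb}}(S^3,K)$ with cone angle $\pi$, sending $\mu$ to an involution. Since $H^2(S^3\ssm K;\Z/2)=0$ this lifts to a trace-free $SU(2)$ representation $\rho$ of $\pi_1(S^3\ssm K)$, which is irreducible because $\sigma$ is. By hypothesis $K$ is $SU(2)$-simple, so $\rho$ must be binary dihedral; but a binary dihedral trace-free representation sends the even subgroup $\pi_1(X)$ into the maximal torus $U(1)$, so $\bar\sigma$ has image in $SO(2)$ and $\sigma$ is abelian, contradicting irreducibility. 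Hence no irreducible $\sigma$ is $\tau$-invariant.

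The remaining, and genuinely harder, case is $\bar\sigma\not\cong\bar\sigma^\tau$, where $\sigma$ does not descend from a trace-free representation of $K$ and the $SU(2)$-simple hypothesis gives no direct leverage. Here I would form the $\tau$-invariant sum $\sigma\oplus\sigma^\tau$ and the induced representation $\mathrm{Ind}_{\pi_1(X)}^{\pi_1(S^3\ssm K)}\sigma$, obtaining a representation of the knot group into $(SU(2)\times SU(2))\rtimes\Z/2$ in which the meridian acts through the swap; this is exactly the setting in which Cornwell's analysis of the character variety via the cord algebra \cite{cornwell} applies. The point where the bridge bound enters is that for a $3$-bridge knot the relevant representation variety is cut out by only two conjugation relations among three meridional generators, and Cornwell's results should force such a non-$\tau$-invariant (hence ``genuinely four-dimensional'') representation to degenerate --- either it is not actually irreducible, or $\bar\sigma$ and $\bar\sigma^\tau$ coincide after all. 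Establishing this degeneration is the crux of the whole argument and I expect it to be the main obstacle; it is precisely the ingredient one cannot expect at higher bridge number. Granting it, both cases are excluded, so $\pi_1(\dcover(K))$ has only abelian representations and $\dcover(K)$ is $SU(2)$-cyclic. Finally, for the two named examples I would record that $8_{18}$ and $10_{109}$ have bridge number $3$ and are $SU(2)$-simple, and that their branched double covers are hyperbolic, so that the theorem produces two hyperbolic $SU(2)$-cyclic manifolds.
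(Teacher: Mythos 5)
Your overall architecture is right, and your first case is essentially the paper's argument: when $\bar\sigma \cong \bar\sigma^\tau$, extending to the quotient group $\pi_1(S^3\ssm N(K))/\llangle\mu^2\rrangle$ (the paper's Lemma~\ref{le:equivariant}), invoking $SU(2)$-simplicity to land in the (binary) dihedral group, and using the odd order of $H_1(\dcover(K))$ to force the restriction to $\pi_1(\dcover(K))$ into $SO(2)$ is exactly Proposition~\ref{prop: SU(2)-cyclic}. (One small step you elide: to extend $\bar\sigma$ over the semidirect product you need the conjugating element $A \in SO(3)$ to satisfy $A^2 = \Id$; this follows from Schur's lemma applied to the irreducible $\bar\sigma$, since $A^2$ centralizes its image. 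The paper does this at the $SL(2,\C)$ level, getting $\hat{A}^2 = \pm 1$.)

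The genuine gap is your second case, and you acknowledge it yourself: the entire theorem rests on ruling out irreducible $\sigma$ with $\bar\sigma\not\cong\bar\sigma^\tau$, and your proposal only says that Cornwell's results ``should force such a representation to degenerate,'' then proceeds by ``granting it.'' The missing idea is the precise form of Cornwell's theorem, which is a statement about \emph{characters}, not representations: for a knot of bridge number at most $3$, every $SL(2,\C)$-character of $\pi_1(\dcover(K))$ is invariant under the involution $\tau$ \cite[Theorem~1.1]{cornwell}. With this in hand, your second case is not a hard case to be degenerated away --- it is vacuous. Indeed, if $\sigma$ is irreducible then $\sigma$ and $\sigma\circ\tau$ have equal characters, so by \cite[Proposition~1.5.2]{culler-shalen-splittings} they are conjugate in $SL(2,\C)$; by Klassen's argument the conjugating element can be taken in $SU(2)$; and Schur's lemma (using $\tau^2 = \mathrm{id}$) shows its square is $\pm 1$, so its image in $SO(3)$ is an involution realizing $\bar\sigma \cong \bar\sigma^\tau$ equivariantly. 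Every irreducible representation is thereby pushed into your first case, and no induced-representation or $(SU(2)\times SU(2))\rtimes\Z/2\Z$ machinery is needed. Without identifying this character-level statement (or proving the degeneration you postulate, which would amount to reproving Cornwell's theorem), your argument does not close.
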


Together with our classification result in Theorem~\ref{thm:su2-abelian-sfs}, this gives an obstruction to the existence of degree-1 maps (also called 1-dominations) from some hyperbolic 3-manifolds to any Seifert fibered 3-manifold other than $S^3$ or a lens space.

\begin{theorem}[Theorem~\ref{thm:obstruction-1-domination}]
Any $SU(2)$-cyclic hyperbolic rational homology 3-sphere $Y$, where $H_1(Y;\Z)$ has odd order, admits no map of degree 1 to any Seifert fibered 3-manifold with 3 or more singular fibers.
Examples of such hyperbolic manifolds include the branched double covers of the knots $8_{18}$ and $10_{109}$. 
\end{theorem}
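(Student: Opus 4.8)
The plan is to argue by contradiction: suppose there is a degree-1 map $f\colon Y \to M$ onto a closed orientable Seifert fibered 3-manifold $M$ with at least three singular fibers, and derive a contradiction with the classification in Theorem~\ref{thm:su2-abelian-sfs}. The two standard facts about degree-1 maps that drive everything are that $f_*\colon \pi_1(Y)\to\pi_1(M)$ is surjective and that $f_*\colon H_1(Y;\Z)\to H_1(M;\Z)$ is surjective. First I would use $\pi_1$-surjectivity to transfer the $SU(2)$-cyclic hypothesis: given any $\rho\colon\pi_1(M)\to SU(2)$, the composite $\rho\circ f_*$ has $\Img(\rho\circ f_*)=\Img(\rho)$ since $f_*$ is onto, and this image is cyclic because $Y$ is $SU(2)$-cyclic. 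Hence every representation of $\pi_1(M)$ has cyclic image, so $M$ is $SU(2)$-cyclic and in particular $SU(2)$-abelian. Next I would use $H_1$-surjectivity: since $H_1(Y;\Z)$ is finite of odd order, its quotient $H_1(M;\Z)$ is also finite of odd order, so $M$ is a rational homology sphere with $|H_1(M;\Z)|$ odd.

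Now $M$ is a closed orientable $SU(2)$-abelian Seifert fibered rational homology sphere, so it appears on the list of Theorem~\ref{thm:su2-abelian-sfs}, and I would eliminate every entry. The manifolds $S^1\times S^2$, $T^3$, and the circle bundles over $T^2$ have infinite first homology, so they are not rational homology spheres. The manifold $\RP^3\#\RP^3$ has $H_1\cong(\Z/2)^2$ of even order, and the base orbifold $S^2(3,3,3)$ forces $|H_1(M;\Z)|$ to be even or infinite by hypothesis; both contradict oddness. For the base orbifold $S^2(2,4,4)$ I would compute $|H_1(M;\Z)|$ directly: abelianizing the standard Seifert presentation gives a presentation matrix whose determinant is a sum of terms, each a product of at least two of the cone orders $2,4,4$ and hence divisible by $8$, so $|H_1(M;\Z)|$ is even whenever it is finite. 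This disposes of every $SU(2)$-abelian Seifert fibered space with three or more singular fibers except those with cyclic homology.

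The remaining possibilities are $S^3$ and lens spaces, and the final step is to observe that these cannot carry a Seifert fibration with three or more singular fibers. Quotienting $\pi_1(M)$ by the central fiber class returns the orbifold fundamental group of the base, and with at least three cone points of orders $\ge 2$ this is a triangle (polygonal) group, which is never cyclic. Thus $\pi_1(M)$ surjects onto a non-cyclic group, so it cannot be cyclic, and $M$ is neither $S^3$ nor a lens space. This contradiction completes the proof. For the stated examples, $\dcover(8_{18})$ and $\dcover(10_{109})$ are hyperbolic and are $SU(2)$-cyclic by Theorem~\ref{thm: bridge number 3}, while the order of $H_1$ of the double branched cover of a knot equals the (always odd) knot determinant, so these manifolds satisfy all the hypotheses.

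The main obstacle I anticipate is precisely this last step: ruling out the cyclic-homology cases $S^3$ and lens spaces, whose odd $H_1$ survives the parity argument. Here one must invoke the non-cyclicity of the base orbifold group, or equivalently the classical fact that any Seifert fibration of $S^3$ or a lens space has at most two exceptional fibers; some care is needed to ensure the three singular fibers are genuine (orders $\ge 2$) so that the base orbifold really does have three cone points. By contrast, the only explicit computation, the parity of $|H_1|$ in the $S^2(2,4,4)$ case, is routine.
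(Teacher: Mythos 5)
Your proposal is correct and follows essentially the same route as the paper: a degree-1 map induces surjections on $\pi_1$ and $H_1$, so the target inherits $SU(2)$-cyclicity and odd finite first homology, and this contradicts the classification in Theorem~\ref{thm:su2-abelian-sfs}. Your case-by-case elimination --- the divisibility-by-$8$ computation for base $S^2(2,4,4)$ via Lemma~\ref{lem:homology-sfs}, and the triangle-group argument showing $S^3$ and lens spaces admit no Seifert fibration with three or more singular fibers --- simply makes explicit what the paper compresses into the single sentence that the classification ``implies'' every such $SU(2)$-cyclic rational homology sphere has even $|H_1|$.
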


To put this last result into context, we note that no Seifert fibered 3-manifold can 1-dominate a hyperbolic manifold.  This follows from basic properties of the simplicial volume $\norm{M}$ of a 3-manifold $M$ \cite{Gromov-volume}.  Indeed, if $Y$ 1-dominates $Y'$ then we have an inequality $\norm{Y} \geq \norm{Y'}$.  But if $Y$ is Seifert fibered then it has zero simplicial volume, whereas if $Y'$ is hyperbolic then $\norm{Y'}$ is proportional to $\operatorname{vol}(Y')$ and hence positive.  In fact, simplicial volume also is additive under decompositions along tori, so no graph manifold can 1-dominate a hyperbolic 3-manifold either.

In the opposite direction, every Seifert fibered 3-manifold is 1-dominated by some hyperbolic manifold.  This is a corollary of recent work of Liu and Sun \cite{Liu-Sun}, who showed that any hyperbolic 3-manifold virtually 1-dominates any other 3-manifold.

Similarly, Brooks and Goldman \cite{Brooks-Goldman} defined a notion of Seifert volume $\bgnorm{M}$, which also satisfies $\bgnorm{Y} \geq \bgnorm{Y'}$ if $Y$ 1-dominates $Y'$.  They showed that $\bgnorm{Y}=0$ for many hyperbolic $Y$, and that $\bgnorm{Y'} > 0$ for $Y'$ with $\widetilde{SL(2,\R)}$-geometry (such as any Brieskorn sphere other than than the Poincar\'e homology sphere).  As a consequence, there are hyperbolic manifolds which do not 1-dominate any Seifert fibered manifold having either $\widetilde{SL(2,\R)}$-geometry or at least four singular fibers, the latter being obstructed by the presence of incompressible tori.  Our examples are stronger in the sense that they give an obstruction to 1-domination of \emph{all} Seifert fibered 3-manifolds with three or more singular fibers. 

\begin{remark}
Interestingly, the manifolds $\dcover(8_{18})$ and $\dcover(10_{109})$ also arise as Dehn fillings of the hyperbolic census manifolds $m036$ and $m100$ respectively, each of which has another $SU(2)$-cyclic filling at distance three from these.  This is the maximum known distance between two $SU(2)$-cyclic Dehn fillings of a one-cusped hyperbolic manifold.  See Examples~\ref{ex:m036} and \ref{ex:m100} for details.
\end{remark}

In previous work \cite{sivek-zentner}, we investigated the question of whether knots in $S^3$ other than torus knots can have infinitely many $SU(2)$-cyclic surgeries.  We still do not know the answer, or indeed whether any such knot can have more than three nontrivial $SU(2)$-cyclic surgeries, though in Example~\ref{ex:22n0-surgeries} we present two hyperbolic knots in $S^3$ with three $SU(2)$-cyclic surgeries each: these are the pretzel knot $P(-2,3,7)$ and a twisted torus knot labeled $k4_4$ in the Callahan--Dean--Weeks census \cite{callahan-dean-weeks}.  More generally, we can prove the following.
\begin{theorem}[Theorem~\ref{thm:four-fillings} and Theorem~\ref{thm:M_g-hyperbolic}]
There are infinitely many compact, oriented, hyperbolic 3-manifolds with torus boundary that have at least four $SU(2)$-cyclic Dehn fillings.
\end{theorem}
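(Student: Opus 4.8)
The plan is to exhibit an explicit one-parameter family $M_g$ of one-cusped $3$-manifolds, to check that each member carries four $SU(2)$-cyclic Dehn fillings, and then to argue separately that infinitely many of the $M_g$ are hyperbolic and pairwise distinct; these two tasks are what Theorem~\ref{thm:four-fillings} and Theorem~\ref{thm:M_g-hyperbolic} are meant to accomplish. The most efficient way to produce several $SU(2)$-cyclic fillings at once is through the Montesinos trick. I would take $M_g$ to be the branched double cover $\dcover(B^3,T_g)$ of a two-strand tangle $T_g\subset B^3$ obtained from a fixed tangle by inserting $g$ full twists in a fixed twist region, so that $M_g$ has a single torus boundary component. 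Each rational tangle replacement on $T_g$ closes it up to a knot $K\subset S^3$, and the corresponding Dehn filling of $M_g$ is exactly $\dcover(K)$; the manifolds $\dcover(8_{18})$ and $\dcover(10_{109})$ that appear as fillings of the census manifolds $m036$ and $m100$ should be the seed cases of this family, generalized so that the census examples sit inside it.

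For the four-fillings statement I would single out four rational tangle slopes $r_1,\dots,r_4$ such that, for every $g$, the closure $K_{g,j}$ produced by the $r_j$-replacement is either a knot whose branched double cover is visibly $S^3$ or a lens space (hence $SU(2)$-cyclic, being cyclic-$\pi_1$) or else an $SU(2)$-simple knot (Definition~\ref{def:su2-simple}) of bridge number at most $3$. In the latter case Theorem~\ref{thm: bridge number 3} gives at once that $\dcover(K_{g,j})=M_g(r_j)$ is $SU(2)$-cyclic; where a filling happens instead to be a small Seifert fibered space I would certify the $SU(2)$-abelian (hence $SU(2)$-cyclic, for these rational homology spheres) property directly from its base orbifold via Theorem~\ref{thm:su2-abelian-sfs}. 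The key logical point is that at least one of the four slopes must yield a filling that is $SU(2)$-cyclic \emph{without} being cyclic-$\pi_1$ — otherwise the cyclic surgery theorem would forbid a fourth such slope — and this is precisely what the bridge-number-three fillings supply, since $\dcover(K)$ for an $SU(2)$-simple $K$ of bridge number at most $3$ need not be a lens space (indeed $\dcover(8_{18})$ is hyperbolic). The content of this step is then the bookkeeping that all four slopes stay $SU(2)$-cyclic as $g$ varies (adding a full twist in the chosen region must not disturb the $SU(2)$-simple structure or raise the bridge number of the four relevant closures) and that the four slopes are genuinely distinct, so that we really exceed the three fillings permitted by the cyclic surgery theorem.

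The main obstacle is Theorem~\ref{thm:M_g-hyperbolic}: showing that infinitely many $M_g$ are hyperbolic. Rather than construct hyperbolic structures by hand, I would realize the whole family as high Dehn fillings of a single fixed parent. The inserted twists are realized by surgery on a fixed (possibly disconnected) curve $C$ in the branched double cover, so that $M_g=W(\text{slopes}(g))$, where $W$ is the fixed multicusped manifold obtained by drilling out $C$ and leaving the $M_g$-cusp unfilled, and the filling slopes on the $C$-cusps tend to infinity as $g\to\infty$. If $W$ is hyperbolic, then Thurston's hyperbolic Dehn surgery theorem guarantees that $M_g$ is hyperbolic for all but finitely many $g$, and the manifolds $M_g$ take infinitely many distinct values, as one sees from their hyperbolic volumes (which converge monotonically to $\operatorname{vol}(W)$ while remaining strictly smaller) or from $H_1(M_g;\Z)$. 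Crucially, filling the $C$-cusps only changes the number of twists in the band and so preserves the four tangle closures as $SU(2)$-simple knots of bridge number at most $3$, which is why the four $SU(2)$-cyclic fillings persist across the family. I therefore expect the delicate point to be establishing the hyperbolicity of $W$ itself, together with the verification that the four distinguished fillings descend from honest slopes on $W$; this is most cleanly settled by producing an explicit ideal triangulation of $W$ and rigorously certifying a solution of its gluing equations, for instance by interval-arithmetic methods.
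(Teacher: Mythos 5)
There is a genuine gap, and it sits exactly where you locate the ``bookkeeping'': the persistence of the fourth $SU(2)$-cyclic filling as $g$ varies. Your mechanism for that filling is Theorem~\ref{thm: bridge number 3}, which requires the relevant tangle closure to be an $SU(2)$-simple knot of bridge number at most $3$ for \emph{every} $g$. But $SU(2)$-simplicity is only known for sporadic knots ($8_{18}$ and $10_{109}$, via Cornwell's case-by-case computations \cite{cornwell}), and there is no argument available --- in this paper or elsewhere --- that inserting full twists in a fixed region preserves $SU(2)$-simplicity, nor that it keeps the bridge number at $3$; the paper itself emphasizes that the bridge-number criterion applies only to sporadic examples. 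So the step you describe as bookkeeping is in fact the entire difficulty, and your proposed mechanism cannot supply an infinite family. The paper sidesteps this: the fourth filling of $M_g$ is not a hyperbolic branched double cover at all, but the \emph{graph manifold} $Y(T_{2,3},T_{2,2g+1})$, a splice of two torus knot exteriors, which is $SU(2)$-cyclic for every $g$ by a uniform theorem of Motegi, the second author, and Ni--Zhang \cite{motegi, zentner-simple, ni-zhang}; the other three fillings are lens spaces obtained by rational tangle replacements that turn the alternating knot $L_g = L(T_{2,3},T_{2,2g+1})$ into $2$-bridge links, with the Montesinos trick packaging all four as fillings of a single knot complement $M_g \subset Y_g$. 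Relatedly, $\dcover(8_{18})$ and $\dcover(10_{109})$ are not seed cases of the infinite family: they arise as fillings of the census manifolds $m036$ and $m100$, which are separate sporadic examples (with a Seifert fibered rather than toroidal non-lens-space filling), and the actual small members of the family $M_g$ are instead related to $m016$ and $m118$.

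Your hyperbolicity strategy --- drill the twist circle to get a fixed parent $W$, invoke Thurston's hyperbolic Dehn surgery theorem, and distinguish the $M_g$ by volume --- is a standard and in-principle viable alternative to what the paper does, and it would indeed yield ``all but finitely many $g$,'' which suffices for the statement. However, it defers the essential input (hyperbolicity of $W$) to an unperformed certified computation, and one must also check that the twist-region circle is not isotopic into the cusp or otherwise degenerate. The paper instead proves hyperbolicity of every $M_g$, $g \geq 1$, purely topologically via Haken hyperbolization: irreducibility and failure to be Seifert fibered are extracted from the three lens space fillings; atoroidality (Proposition~\ref{prop:M_g-atoroidal}) uses the $\Delta=2$ case of the cyclic surgery theorem machinery \cite{cgls}, Gordon's classification of fillings of cable spaces \cite{gordon}, Scharlemann's theorem on reducible fillings \cite{scharlemann}, the uniqueness of the incompressible torus in $Y_g$ (Proposition~\ref{prop:splicing-facts}), and an elementary arithmetic lemma showing $L(11g+3,11)$ and $L(13g+8,13)$ cannot both be surgeries on one torus knot; and essential disks and annuli are then excluded by a formal argument. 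If you want to pursue your route, the honest statement of what remains is: (i) a uniform, $g$-independent certification of $SU(2)$-cyclicity for the fourth filling --- which is precisely what the graph-manifold choice provides --- and (ii) a rigorous hyperbolicity certificate for the parent manifold.
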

The cyclic surgery theorem \cite{cgls} says that each of these can only have three cyclic Dehn fillings, and all of the manifolds in Example~\ref{ex:22n0-surgeries} and in Theorem~\ref{thm:four-fillings} achieve this upper bound; the fourth $SU(2)$-cyclic filling is a graph manifold.  Some further examples with four $SU(2)$-cyclic fillings, only two of which are cyclic, are presented in Examples~\ref{ex:m036} and \ref{ex:four-sporadic}.

\subsection*{Organization}

The proof of Theorem~\ref{thm:su2-abelian-sfs} consists of several parts.  Section~\ref{sec:sfs-over-s2} is devoted to the proof of Theorem~\ref{thm:sfs-over-s2}, which handles the cases where the base orbifold is $S^2$ with some number of singular fibers; this includes the cases where $\pi_1(Y)$ is cyclic and also those with base orbifold $S^2(2,4,4)$ or $S^2(3,3,3)$.  The main result of Section~\ref{sec:sfs-over-other-bases}, Theorem~\ref{thm:sfs-over-other-bases}, addresses other base orbifolds, which have to be either $\RP^2$ or $T^2$ and which yield $\RP^3\#\RP^3$ and the circle bundles over $T^2$ (including $T^3$) respectively.

Sections~\ref{sec:sol} and \ref{sec:hyperbolic} address the cases of $\mathrm{Sol}$ and hyperbolic geometry, respectively.  In the former we obtain a complete classification; in the latter we prove Theorem~\ref{thm: bridge number 3}, giving a criterion for proving that the branched double cover of a knot is $SU(2)$-cyclic, but we can only apply it to sporadic examples.  We apply these examples to questions about 1-domination in Theorem~\ref{thm:obstruction-1-domination}.  Then in Section~\ref{sec:cyclic-covers} we show that $SU(2)$-cyclicity is not preserved under finite cyclic covers.  Finally, in Section~\ref{sec:four-fillings} we present examples of 3-manifolds with torus boundary and many $SU(2)$-cyclic fillings.

Throughout this paper, we will identify $SU(2)$ with the group of unit quaternions via the map $\left(\begin{smallmatrix} \alpha & \beta \\ -\bar\beta & \bar\alpha \end{smallmatrix}\right) \mapsto \alpha + \beta j$, and $\R^3$ with the space of purely imaginary quaternions with basis $i,j,k$.  Any element of $SU(2)$ can be written in the form $\cos(\theta) + v\sin(\theta)$, where $v\in \R^3$ is a unit vector, and the product of any $v,w \in \R^3$ is $vw = -\langle v,w\rangle + (v\times w)$, with $\re(vw) = -\langle v,w\rangle$.

\subsection*{Acknowledgments}

We thank Chris Cornwell for some helpful discussions and for sharing his Mathematica code from \cite{cornwell} with us. We also thank Yi Liu and Hongbin Sun for helpful discussions, and the referees for a careful reading and for suggesting Lemma~\ref{lem:triangle}. The second author is also grateful for support by the SFB ‘Higher invariants’ (funded by the Deutsche Forschungsgemeinschaft (DFG)) at the University of Regensburg, and for support by a Heisenberg fellowship of the DFG.

\section{Seifert fibered manifolds over $S^2$} \label{sec:sfs-over-s2}

Our goal in this section is to classify the $SU(2)$-cyclic Seifert fibered manifolds with base orbifold $S^2$ and any number of singular fibers.  Following \cite{jankins-neumann}, we write
\[ Y = S^2((\alpha_1,\beta_1),(\alpha_2,\beta_2),\dots,(\alpha_n,\beta_n)) \]
with $\gcd(\alpha_i,\beta_i)=1$ and $\alpha_i \geq 1$ for all $i$.  Then by \cite[Theorem~6.1]{jankins-neumann}, we have
\begin{equation} \label{eq:pi1-sfs}
\pi_1(Y) = \langle c_1,c_2,\dots,c_n,h \mid [h,c_i]=c_i^{\alpha_i}h^{\beta_i}=1\,\,\forall i, c_1c_2\dots c_n=1 \rangle.
\end{equation}
The order of the $(\alpha_i,\beta_i)$ does not matter, so throughout this section, we will assume that $\alpha_1 \leq \alpha_2 \leq \dots \leq \alpha_n$.  If $0 \leq n \leq 2$ then $Y$ is a lens space, $S^3$, or $S^1\times S^2$, which we already know to be cyclic and hence $SU(2)$-cyclic, so we can always take $n \geq 3$.  Moreover, by \cite[Theorem~1.5]{jankins-neumann}, we can also replace
\[ (1,k), (\alpha,\beta) \leadsto (1,0), (\alpha,\beta+k\alpha) \leadsto (\alpha,\beta+k\alpha) \]
without changing $Y$, so throughout this section we will also assume that $\alpha_1 \geq 2$.  (This substitution reduces $n$ by $1$, but if it results in a fibration with $n < 3$ then $\pi_1(Y)$ is again cyclic and so we can continue to assume $n \geq 3$.)

The main result of this section is the following.
\begin{theorem} \label{thm:sfs-over-s2}
Let $Y$ be a Seifert fiber space with base $S^2$ and any number of singular fibers.  Then $Y$ is $SU(2)$-abelian if and only if one of the following is true:
\begin{itemize}
\item $Y$ is a lens space, $S^3$, or $S^1\times S^2$;
\item $Y$ has base orbifold $S^2(2,4,4)$;
\item $Y$ has base orbifold $S^2(3,3,3)$ and $|H_1(Y;\Z)|$ is even or infinite.
\end{itemize}
\end{theorem}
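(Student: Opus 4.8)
The plan is to decide exactly when $\pi_1(Y)$ admits an \emph{irreducible} representation into $SU(2)$, since $Y$ fails to be $SU(2)$-abelian precisely when such a representation exists: a subgroup of $SU(2)$ is non-abelian if and only if the corresponding representation is irreducible (an abelian subgroup lies in a maximal torus and hence is reducible, and conversely a reducible unitary representation is diagonalizable). So throughout I work with the presentation \eqref{eq:pi1-sfs} and ask for irreducible $\rho\colon\pi_1(Y)\to SU(2)$.

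The first reduction uses that $h$ is central in $\pi_1(Y)$, so $\rho(h)$ is central in the image. If $\rho(h)\neq\pm1$ its centralizer is a maximal torus, forcing the whole image to be abelian; thus any irreducible $\rho$ has $\rho(h)=\epsilon$ with $\epsilon\in\{+1,-1\}$. The relation $c_i^{\alpha_i}h^{\beta_i}=1$ then becomes $\rho(c_i)^{\alpha_i}=\epsilon^{\beta_i}$. Writing $\rho(c_i)$ with rotation angle $\theta_i\in[0,\pi]$, this says that $\alpha_i\theta_i$ is an even or odd multiple of $\pi$ according to whether $\epsilon^{\beta_i}=+1$ or $-1$, so each $\theta_i$ ranges over an explicit finite arithmetic progression, and $\rho(c_i)$ is non-central exactly when $\theta_i\in(0,\pi)$. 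The surviving relation $c_1\cdots c_n=1$, together with irreducibility, is then governed by the strict triangle inequality of Lemma~\ref{lem:triangle} when $n=3$, and by fusing pairs of generators and iterating that inequality when $n\geq4$. This dictionary converts the whole problem into: for which admissible angle data $(\theta_1,\dots,\theta_n)$ do the \emph{strict} polygon inequalities hold?

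For the direction producing irreducible representations I would take $\epsilon=-1$, which is advantageous because any $\alpha_i=2$ forces $\beta_i$ odd (as $\gcd(\alpha_i,\beta_i)=1$) and hence the convenient non-central angle $\theta_i=\pi/2$. When $n\geq4$ I expect the polygon inequalities to have ample slack; the extreme case is $(2,\dots,2)$, where all $\theta_i=\pi/2$ and one exhibits an explicit representation with image the quaternion group $Q_8$, so every such $Y$ is non-abelian. When $n=3$ I would run through the triples $(\alpha_1,\alpha_2,\alpha_3)$, selecting for each $i$ an admissible $\theta_i\in(0,\pi)$ (whose parity is dictated by $\beta_i$) and verifying the strict triangle inequalities; I would check that for \emph{every} admissible choice of the $\beta_i$, the spherical bases, the Euclidean base $(2,3,6)$, and all hyperbolic bases admit such a choice, so none of these are $SU(2)$-abelian.

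The crux, and the main obstacle, is isolating the two exceptional families and proving non-existence there. For base orbifold $(2,4,4)$ all three $\beta_i$ are forced odd, so with $\epsilon=-1$ the only non-central angles are $\theta_1=\pi/2$ and $\theta_2,\theta_3\in\{\pi/4,3\pi/4\}$ (while $\epsilon=+1$ makes $c_1$ central, collapsing to two commuting generators); one checks that every resulting angle triple lands on the boundary of the triangle inequality, so only reducible representations occur. For base orbifold $(3,3,3)$ the admissible non-central angle is $2\pi/3$ when $\beta_i$ is even and $\pi/3$ when $\beta_i$ is odd, and the strict inequalities hold precisely when an odd number of the $\beta_i$ are odd, i.e.\ when $\beta_1+\beta_2+\beta_3$ is odd. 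The final step is the homological translation: computing $|H_1(Y;\Z)|=9|\beta_1+\beta_2+\beta_3|$ (read as infinite when the sum vanishes) shows that $\beta_1+\beta_2+\beta_3$ is even exactly when $|H_1(Y;\Z)|$ is even or infinite, matching the statement. The delicate points are the parity bookkeeping linking $\epsilon^{\beta_i}$ to $|H_1(Y;\Z)|$, and confirming that $(2,4,4)$ and $(3,3,3)$ are the only triples for which no admissible angle choice escapes the boundary of the triangle inequality.
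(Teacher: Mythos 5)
Your proposal is correct in outline and, at its core, coincides with the paper's proof: the paper likewise reduces to $\rho(h)=\pm1$, applies the same criterion (Lemma~\ref{lem:triangle}), isolates $(2,4,4)$ and $(3,3,3)$ by exactly the forced-angle analysis you describe (Propositions~\ref{prop:some-alpha-is-2} and~\ref{prop:333-cyclic}), and finishes the $(3,3,3)$ case with the same computation $|H_1(Y;\Z)|=9|\beta_1+\beta_2+\beta_3|$ (Lemma~\ref{lem:homology-sfs}). The differences are organizational. The paper first studies representations of the orbifold group $\Delta(\alpha_1,\dots,\alpha_n)$ (your case $\epsilon=+1$), shows they are all abelian exactly for the types $(2,\dots,2,p,q)$ and $(3,3,3)$ (Theorem~\ref{thm:triangle-reps}), and only then invokes $\rho(h)=-1$ via Lemma~\ref{lem:triangle-h-image}; you run both values of $\epsilon$ in one uniform framework, which is equivalent and arguably cleaner. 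For $n\geq4$ the paper argues by induction on the number of singular fibers (Theorem~\ref{thm:large-seifert-fibered}), extending a non-abelian representation of a space with $n-1$ fibers using the quaternions $j,k$, with $(2,2,4,4)$ done by hand; your alternative of fusing generators and iterating the three-term inequality also works and avoids the induction.

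Two spots in your sketch would need to be filled in to make it a proof. First, the claim that for $n\geq4$ the polygon inequalities have ``ample slack'' is an expectation, not an argument; the clean statement that makes it work is that two forced angle-$\pi/2$ generators (from $\alpha_i=2$, $\beta_i$ odd) have a product realizing \emph{any} angle in $[0,\pi]$, after which Lemma~\ref{lem:triangle} applies to the fused triple --- this handles precisely the shapes $(2,\dots,2,p,q)$ that cannot be treated through the orbifold quotient, with your $Q_8$ example (or the paper's explicit $(2,2,4,4)$ representation) as the degenerate case. Second, for $(3,3,3)$ you rule out non-abelian representations only for $\epsilon=-1$; you must also dispose of $\epsilon=+1$, where all $\theta_i\in\{0,2\pi/3\}$ and every non-central choice lands exactly on the boundary $\theta_1+\theta_2+\theta_3=2\pi$ (this is the paper's statement that $\Delta(3,3,3)$ has no non-abelian $SU(2)$-representations). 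Both gaps are filled by computations already implicit in your framework, so the plan as a whole is sound.
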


\begin{proof}
Let $n$ be the number of singular fibers.  In the case where $n \leq 2$ there is nothing to show.  The case $n=3$ is Theorem~\ref{thm:small-seifert-fibered}, while Theorem~\ref{thm:large-seifert-fibered} asserts that there are no further examples when $n \geq 4$.
\end{proof}

As a preliminary step, we note that the fundamental group $\pi_1(Y)$ surjects onto the orbifold fundamental group
\begin{align} \label{eq:pi1-triangle}
\begin{split}
\Delta(\alpha_1,\dots,\alpha_n) &= \pi_1(Y)/\llangle h\rrangle \\
&= \langle c_1,\dots,c_n \mid c_i^{\alpha_i}=1\ \forall i, c_1\dots c_n=1 \rangle.
\end{split}
\end{align}
In many cases, we will build non-abelian representations of $\pi_1(Y)$ by composing this surjection with a non-abelian representation of $\Delta(\alpha_1,\dots,\alpha_n)$.  The following lemma will be useful in the remaining cases.

\begin{lemma} \label{lem:triangle-h-image}
If there are no non-abelian representations
\[ \Delta(\alpha_1,\dots,\alpha_n) \to SU(2), \]
then every non-abelian representation $\rho: \pi_1(Y) \to SU(2)$ satisfies $\rho(h)=-1$.
\end{lemma}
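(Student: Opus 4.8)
The plan is to exploit the fact that $h$ is a central element of $\pi_1(Y)$ --- it commutes with every generator $c_i$ by the relations $[h,c_i]=1$ --- together with the elementary structure of $SU(2)$. Let $\rho\colon \pi_1(Y)\to SU(2)$ be any non-abelian representation. Since $h$ is central, $\rho(h)$ commutes with $\rho(g)$ for every $g\in\pi_1(Y)$, so $\rho(h)$ lies in the center of the image subgroup $\rho(\pi_1(Y))$.

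First I would show $\rho(h)=\pm 1$. Recall that, under the identification of $SU(2)$ with the unit quaternions, any element that is not $\pm 1$ has the form $\cos\theta + v\sin\theta$ with $v$ a unit imaginary quaternion and $\sin\theta\neq 0$, and its centralizer in $SU(2)$ is the maximal torus $\{\cos\phi + v\sin\phi : \phi\in\R\}$, which is abelian. Thus if $\rho(h)\neq\pm 1$, then every element commuting with $\rho(h)$ --- in particular all of $\rho(\pi_1(Y))$ --- would lie in this abelian torus, contradicting the assumption that $\rho$ is non-abelian. Hence $\rho(h)\in\{\pm 1\}$.

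It remains to rule out $\rho(h)=1$, and here I would invoke the hypothesis. If $\rho(h)=1$, then since $h$ is central its normal closure $\llangle h\rrangle$ equals the cyclic subgroup it generates, and $\rho$ kills it; therefore $\rho$ factors through the quotient $\pi_1(Y)/\llangle h\rrangle \cong \Delta(\alpha_1,\dots,\alpha_n)$, yielding a representation $\bar\rho\colon \Delta(\alpha_1,\dots,\alpha_n)\to SU(2)$ with the same image as $\rho$. Since $\rho$ is non-abelian, so is $\bar\rho$, contradicting the assumption that no such non-abelian representation of the orbifold group exists. Therefore $\rho(h)=-1$, as claimed.

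I expect no serious obstacle here: the entire argument rests on the two standard facts that the center of $SU(2)$ is $\{\pm 1\}$ and that the centralizer of any other element is an abelian maximal torus. The only point requiring a moment's care is the reduction in the last paragraph, namely checking that $\rho(h)=1$ genuinely forces $\rho$ to descend to the orbifold group --- which is immediate from the centrality of $h$, since killing $h$ kills its entire normal closure.
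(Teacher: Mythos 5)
Your proof is correct and follows essentially the same route as the paper's: use centrality of $h$ plus the non-abelian hypothesis to force $\rho(h)\in\{\pm 1\}$, then rule out $\rho(h)=1$ by factoring through $\pi_1(Y)/\llangle h\rrangle \cong \Delta(\alpha_1,\dots,\alpha_n)$ and invoking the assumption. The only difference is that you spell out the centralizer-is-a-maximal-torus argument that the paper leaves implicit, which is a reasonable bit of added detail.
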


\begin{proof}
Since $h$ is central, it must commute with the image of $\rho$, hence $\rho(h)$ lies in the center $\{\pm 1\}$ of $SU(2)$.  If $\rho(h)=1$ then $\rho$ induces a representation of $\pi_1(Y)/\llangle h\rrangle$ with the same non-abelian image, and by assumption this does not exist, so we must have $\rho(h)=-1$ instead.
\end{proof}

\subsection{Representations of the orbifold fundamental group}

In this subsection, we fix a base orbifold $B = S^2(\alpha_1,\dots,\alpha_n)$, with $n \geq 3$ and $2 \leq \alpha_1 \leq \dots \leq \alpha_n$, and determine when its fundamental group has a non-abelian $SU(2)$-representation.  We begin with what will turn out to be nearly all cases in which no such representation exists.

\begin{lemma} \label{lem:triangle-start-2}
If $(\alpha_1,\dots,\alpha_n) = (2,\dots,2,p,q)$ for some integers $q \geq p \geq 2$, then every representation
\[ \Delta(\alpha_1,\dots,\alpha_n) \to SU(2) \]
has abelian image.
\end{lemma}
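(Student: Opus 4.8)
The plan is to exploit the fact that $SU(2)$ contains no element of order exactly two apart from the central element $-1$. Writing any $g \in SU(2)$ in the form $g = \cos\theta + v\sin\theta$ with $v \in \R^3$ a unit vector, as in the introduction, one has $g^2 = \cos(2\theta) + v\sin(2\theta)$, so $g^2 = 1$ forces $\sin(2\theta) = 0$ and $\cos(2\theta) = 1$; hence $\theta \in \pi\Z$ and $g = \pm 1$. Thus the only solutions of $g^2 = 1$ in $SU(2)$ are the two central elements $\pm 1$.

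I would first apply this to the generators labelled by $2$. By hypothesis $\alpha_1 = \dots = \alpha_{n-2} = 2$, so each of $\rho(c_1), \dots, \rho(c_{n-2})$ squares to $1$ and therefore equals $\pm 1$. In particular these all lie in the center of $SU(2)$, so their product $\rho(c_1)\cdots\rho(c_{n-2})$ is a single central element $\varepsilon \in \{\pm 1\}$.

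Next I would substitute this into the relation $c_1 \cdots c_n = 1$. Applying $\rho$ and using the centrality of $\varepsilon$ gives $\varepsilon\,\rho(c_{n-1})\rho(c_n) = 1$, so $\rho(c_{n-1})\rho(c_n) = \varepsilon^{-1} = \pm 1$ and hence $\rho(c_n) = \varepsilon^{-1}\rho(c_{n-1})^{-1}$. Being the product of a central element with $\rho(c_{n-1})^{-1}$, this commutes with $\rho(c_{n-1})$.

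Finally, the image of $\rho$ is generated by $\rho(c_1), \dots, \rho(c_n)$: the first $n-2$ of these are central, while the last two commute with each other by the previous step. Since central elements commute with everything, all the generators pairwise commute and the image is abelian, as desired. I expect no genuine obstacle here, as the whole argument reduces to the elementary observation that order-two elements of $SU(2)$ are central. This is precisely the feature distinguishing $SU(2)$ from $SO(3)$, and it is what prevents the dihedral-type orbifold groups $\Delta(2,\dots,2,p,q)$ from admitting any non-abelian $SU(2)$-representation.
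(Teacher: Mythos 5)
Your proof is correct and follows essentially the same route as the paper's: reduce the generators of order $2$ to $\pm 1$, then observe that the relation $c_1\cdots c_n = 1$ forces $\rho(c_n)$ to be $\pm\rho(c_{n-1})^{-1}$, so all generators commute. The only difference is that you spell out the elementary fact that $g^2=1$ implies $g=\pm1$ in $SU(2)$, which the paper leaves implicit.
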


\begin{proof}
The generators $c_1,\dots,c_{n-2}$ in the presentation~\eqref{eq:pi1-triangle} are sent to elements of square $1$, so we have $\rho(c_i)=\pm1$ for all $i \leq n-2$, and then the relation $\rho(c_1\dots c_n)=1$ becomes $\rho(c_{n-1})\rho(c_n) = \pm 1$.  But then $\rho(c_n) = \pm \rho(c_{n-1})^{-1}$, so $\rho(c_{n-1})$ and $\rho(c_n)$ commute with each other and thus the image of $\rho$ must be abelian as claimed.
\end{proof}

We will now understand the case $n=3$ and then proceed to larger $n$ by induction.  We are grateful to one of the referees for suggesting the following lemma as a way to simplify the arguments throughout the rest of the section.

\begin{lemma} \label{lem:triangle}
Consider the free group $F$ on two generators, with presentation
\[ F = \langle x_1,x_2,x_3 \mid x_1x_2x_3 = 1\rangle. \]
Given three real numbers $\theta_1,\theta_2,\theta_3 \in [0,\pi]$, there are imaginary unit quaternions $v_1,v_2,v_3$ and a representation $\rho: F \to SU(2)$ satisfying
\[ \rho(x_i) = e^{v_i\theta_i} = \cos(\theta_i) + \sin(\theta_i) v_i, \qquad 1 \leq i \leq 3 \]
if and only if all of the inequalities
\begin{align} \label{eq:theta-triangle}
\theta_1+\theta_2 &\geq \theta_3, &
\theta_2+\theta_3 &\geq \theta_1, &
\theta_3+\theta_1 &\geq \theta_2
\end{align}
and 
\begin{equation} \label{eq:theta-sum}
\theta_1+\theta_2+\theta_3 \leq 2\pi
\end{equation}
are satisfied.  If $\rho$ exists then 
it has non-abelian image if and only if all four of these inequalities are strict.
\end{lemma}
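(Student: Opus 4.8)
The plan is to reduce the statement to a single computation of the real part of a quaternion product, followed by a monotonicity argument for $\cos$. Since $x_3 = (x_1x_2)^{-1}$ in $F$, giving $\rho$ is the same as choosing $\rho(x_1),\rho(x_2) \in SU(2)$ and setting $\rho(x_3) = (\rho(x_1)\rho(x_2))^{-1}$; the only constraint is that each $\rho(x_i)$ have real part $\cos\theta_i$, equivalently that $\rho(x_3)^{-1} = \rho(x_1)\rho(x_2)$ have real part $\cos\theta_3$. Conjugating $\rho$ preserves all real parts, so I may assume $v_1 = i$, writing $\rho(x_1) = \cos\theta_1 + \sin\theta_1\,i$ and $\rho(x_2) = \cos\theta_2 + \sin\theta_2\,w$ for an arbitrary imaginary unit quaternion $w$.

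First I would compute, using $\re(iw) = -\langle i,w\rangle$, that
\[ \re(\rho(x_1)\rho(x_2)) = \cos\theta_1\cos\theta_2 - \sin\theta_1\sin\theta_2\,\langle i,w\rangle. \]
As $w$ ranges over all imaginary unit quaternions, $\langle i,w\rangle$ takes every value in $[-1,1]$, so this real part ranges over the closed interval bounded by $\cos(\theta_1+\theta_2)$ and $\cos(\theta_1-\theta_2)$; since $\theta_1,\theta_2 \in [0,\pi]$ the identity $\cos(\theta_1+\theta_2) - \cos(\theta_1-\theta_2) = -2\sin\theta_1\sin\theta_2 \leq 0$ shows the interval is $[\cos(\theta_1+\theta_2),\,\cos(\theta_1-\theta_2)]$. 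Hence $\rho$ exists if and only if $\cos\theta_3$ lies in this interval.

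The next step is to translate the two bounds into \eqref{eq:theta-triangle} and \eqref{eq:theta-sum} using that $\cos$ is strictly decreasing on $[0,\pi]$. The upper bound $\cos\theta_3 \leq \cos(\theta_1-\theta_2) = \cos|\theta_1-\theta_2|$ is equivalent to $\theta_3 \geq |\theta_1-\theta_2|$, that is, to $\theta_2+\theta_3 \geq \theta_1$ and $\theta_3+\theta_1 \geq \theta_2$. The main subtlety is the lower bound $\cos\theta_3 \geq \cos(\theta_1+\theta_2)$: because $\theta_1+\theta_2$ may exceed $\pi$, I would rewrite $\cos(\theta_1+\theta_2) = \cos\bigl(\min(\theta_1+\theta_2,\,2\pi-\theta_1-\theta_2)\bigr)$ with the argument now in $[0,\pi]$, so that the bound becomes the pair $\theta_3 \leq \theta_1+\theta_2$ and $\theta_1+\theta_2+\theta_3 \leq 2\pi$. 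These are exactly the remaining triangle inequality and \eqref{eq:theta-sum}, finishing the existence half.

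For the last sentence I would analyze when the image, generated by $\rho(x_1)$ and $\rho(x_2)$, is abelian: this happens precisely when their axes agree, i.e. $w = \pm i$, or one of them is central, i.e. some $\theta_i \in \{0,\pi\}$. Tracking the formula above, $\langle i,w\rangle = \pm 1$ forces $\re(\rho(x_1)\rho(x_2)) \in \{\cos(\theta_1+\theta_2),\,\cos(\theta_1-\theta_2)\}$, so $\cos\theta_3$ sits at an endpoint of the interval, which is exactly where one of the four inequalities becomes an equality; and a short check shows that $\theta_i \in \{0,\pi\}$ likewise forces an equality. Conversely, if all four inequalities are strict then $\cos\theta_3$ lies in the open interval, so $\langle i,w\rangle \in (-1,1)$ and $w$ may be chosen not parallel to $i$, while all $\theta_i \in (0,\pi)$ so no $\rho(x_i)$ is central; the image is then non-abelian. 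I expect the bookkeeping matching strictness of each individual inequality with the degenerate configurations $w = \pm i$ and $\theta_i \in \{0,\pi\}$ to be the main, though entirely elementary, obstacle.
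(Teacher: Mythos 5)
Your proposal is correct and takes essentially the same approach as the paper's proof: the same computation $\re(\rho(x_1)\rho(x_2)) = \cos\theta_1\cos\theta_2 - \sin\theta_1\sin\theta_2\,\langle v_1,v_2\rangle$, the same resulting interval $[\cos(\theta_1+\theta_2),\cos(\theta_1-\theta_2)]$ translated into the four inequalities via monotonicity of $\cos$ on $[0,\pi]$ (using $|\theta_1-\theta_2|$ and $\min(\theta_1+\theta_2,\,2\pi-\theta_1-\theta_2)$), and the same identification of endpoint equality with commutativity of $\rho(x_1)$ and $\rho(x_2)$ for the abelian criterion. The only cosmetic difference is your initial conjugation to fix $v_1 = i$, which the paper omits by keeping $v_1,v_2$ arbitrary.
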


\begin{proof}
Given arbitrary unit quaternions $v_1$ and $v_2$ we let $z=e^{v_1\theta_1}e^{v_2\theta_2}$, and we compute that
\begin{equation*} 
\re(z) = \cos(\theta_1)\cos(\theta_2) - \sin(\theta_1)\sin(\theta_2)\langle v_1,v_2\rangle.
\end{equation*}
Since $\sin(\theta_1)$ and $\sin(\theta_2)$ are both nonnegative and $\big| \langle v_1,v_2\rangle \big| \leq \pm 1$ with equality iff $v_2=\pm v_1$, it follows that
\[ \cos(\theta_1+\theta_2) \leq \re(z) \leq \cos(\theta_1-\theta_2). \]
By varying $\langle v_1,v_2\rangle$ we can take $\re(z)$ to be any value in this range, and it achieves either of the endpoints if and only if
\begin{multline*}
\sin(\theta_1)\sin(\theta_2)\langle v_1,v_2\rangle = \pm \sin(\theta_1)\sin(\theta_2) \\
\begin{array}{l}
\Longleftrightarrow\text{ any of }\sin(\theta_1)=0,\ \sin(\theta_2)=0,\text{ or }\langle v_1,v_2\rangle=\pm1\text { are true} \\[0.5ex]
\Longleftrightarrow\ e^{v_1\theta_1} \text{ commutes with } e^{v_2\theta_2}.
\end{array}
\end{multline*}
Now the existence of $\rho$ is equivalent to finding $v_1,v_2,v_3$ such that $z = e^{(-v_3)\theta_3}$, and given $v_1$ and $v_2$ we can find $v_3$ if and only if $\re(z) = \cos(\theta_3)$, so $\rho$ exists if and only if
\begin{equation} \label{eq:re-ijk}
\cos(\theta_1+\theta_2) \leq \cos(\theta_3) \leq \cos(\theta_1-\theta_2),
\end{equation}
and then $\rho$ is abelian if and only if at least one of these inequalities is an equality.

We rewrite \eqref{eq:re-ijk} as
\[ \cos(\alpha) \leq \cos(\theta_3) \leq \cos(\beta), \]
where $\alpha,\beta \in [0,\pi]$ are defined by
\[ \alpha = \begin{cases} \theta_1+\theta_2, & \theta_1+\theta_2 \leq \pi \\ 2\pi-\theta_1-\theta_2, & \theta_1+\theta_2 > \pi \end{cases}
\qquad\text{and}\qquad
\beta = \begin{cases} \theta_1-\theta_2, & \theta_1 \geq \theta_2 \\ \theta_2-\theta_1, & \theta_1<\theta_2. \end{cases} \]
Then \eqref{eq:re-ijk} is satisfied if and only if $\beta \leq \theta_3 \leq \alpha$, since $\cos(\theta)$ is strictly monotone decreasing on $[0,\pi]$.  Since $\beta = \max(\theta_1-\theta_2,\theta_2-\theta_1)$, we have
\begin{align*}
\beta \leq \theta_3 \ &\Longleftrightarrow\ \theta_1-\theta_2 \leq \theta_3 \text{ and } \theta_2-\theta_1 \leq \theta_3 \\
&\Longleftrightarrow\ \theta_2+\theta_3 \geq \theta_1 \text{ and } \theta_3+\theta_1 \geq \theta_2,
\end{align*}
and likewise for $\alpha = \min(\theta_1+\theta_2, 2\pi-\theta_1-\theta_2)$ we have
\begin{align*}
\theta_3 \leq \alpha \ &\Longleftrightarrow\ \theta_3 \leq \theta_1+\theta_2 \text{ and } \theta_3 \leq 2\pi-\theta_1-\theta_2 \\
&\Longleftrightarrow\ \theta_1+\theta_2 \geq \theta_3 \text{ and } \theta_1+\theta_2+\theta_3 \leq 2\pi.
\end{align*}
So the desired $v_1$, $v_2$, and $v_3$ exist if and only if the inequalities \eqref{eq:theta-triangle} and \eqref{eq:theta-sum} are satisfied.  Moreover, we have either $\alpha=\theta_3$ or $\beta=\theta_3$, which is equivalent to $\rho$ having abelian image, if and only if one of the four inequalities is in fact an equality.
\end{proof}

\begin{proposition} \label{prop:triangle-3}
Suppose that $n=3$ and $\alpha_1 \geq 3$. Then there is a representation $\rho: \Delta(\alpha_1,\alpha_2,\alpha_3) \to SU(2)$ with non-abelian image if and only if $(\alpha_1,\alpha_2,\alpha_3) \neq (3,3,3)$.
\end{proposition}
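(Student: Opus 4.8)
The plan is to combine Lemma~\ref{lem:triangle} with the observation that a representation of $\Delta(\alpha_1,\alpha_2,\alpha_3)$ is precisely a representation of the free group $F = \langle x_1,x_2,x_3 \mid x_1x_2x_3=1\rangle$ (via $x_i\mapsto c_i$) for which $\rho(c_i)^{\alpha_i}=1$. Writing $\rho(c_i)=e^{v_i\theta_i}$ with $\theta_i\in[0,\pi]$, the relation $\rho(c_i)^{\alpha_i}=1$ forces $\alpha_i\theta_i\in 2\pi\Z$. Since a non-abelian representation cannot send any $c_i$ to a central element $\pm1$ (otherwise $c_1c_2c_3=1$ forces the other two images to commute), we need each $\theta_i\in(0,\pi)$, i.e. $\theta_i=\tfrac{2\pi k_i}{\alpha_i}$ with $0<k_i<\alpha_i/2$. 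By Lemma~\ref{lem:triangle} the task is then to choose such integers $k_i$ so that the strict forms of \eqref{eq:theta-triangle} and \eqref{eq:theta-sum} all hold. It is convenient to record the angles as $t_i:=\tfrac{k_i}{\alpha_i}\in(0,\tfrac12)$, so that the requirements become the strict triangle inequalities among the $t_i$ together with $t_1+t_2+t_3<1$.

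For the ``only if'' direction I would dispose of $(3,3,3)$ directly: the only multiple of $\tfrac13$ lying in $(0,\tfrac12)$ is $\tfrac13$, so every non-central choice has $\theta_i=\tfrac{2\pi}{3}$ and hence $\theta_1+\theta_2+\theta_3=2\pi$. This makes \eqref{eq:theta-sum} an equality, so by Lemma~\ref{lem:triangle} any such representation is abelian; thus $\Delta(3,3,3)$ admits no non-abelian $SU(2)$-representation.

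For the ``if'' direction, assume $\alpha_1\le\alpha_2\le\alpha_3$ and $(\alpha_1,\alpha_2,\alpha_3)\neq(3,3,3)$. My first attempt is the uniform choice $k_i=1$, i.e. $t_i=\tfrac1{\alpha_i}$, which satisfies $t_1\ge t_2\ge t_3$. Then the inequalities $t_1+t_2>t_3$ and $t_3+t_1>t_2$ are automatic, while $t_1+t_2+t_3=\sum\tfrac1{\alpha_i}<1$ holds for every admissible triple other than $(3,3,3)$, by a short check using $\alpha_1\ge3$. The only inequality that can fail is $\tfrac1{\alpha_2}+\tfrac1{\alpha_3}>\tfrac1{\alpha_1}$, so whenever this holds we are done with $k_i=1$.

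It remains to treat the case $\tfrac1{\alpha_2}+\tfrac1{\alpha_3}\le\tfrac1{\alpha_1}$, which forces $\alpha_1<\alpha_2$ and $\alpha_3\ge4$. Here I keep $t_1=\tfrac1{\alpha_1}$ and $t_2=\tfrac1{\alpha_2}$ and search for $t_3=\tfrac{k_3}{\alpha_3}$ in the open interval $(L,U)$ carved out by the three remaining strict constraints, where $L=\tfrac1{\alpha_1}-\tfrac1{\alpha_2}$ (from $t_2+t_3>t_1$) and $U=\min\!\big(\tfrac1{\alpha_1}+\tfrac1{\alpha_2},\,1-\tfrac1{\alpha_1}-\tfrac1{\alpha_2}\big)$ (from $t_1+t_2>t_3$ and $t_1+t_2+t_3<1$; note $U\le\tfrac12$ automatically, so $t_3<\tfrac12$ is free). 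Since the admissible values of $t_3$ are spaced $\tfrac1{\alpha_3}$ apart, the crux, and the step I expect to be the main obstacle, is to show $U-L>\tfrac1{\alpha_3}$ so that the interval contains a multiple of $\tfrac1{\alpha_3}$ in its interior. I anticipate this reduces to an elementary estimate in the two subcases for $U$: if $U=\tfrac1{\alpha_1}+\tfrac1{\alpha_2}$ then $U-L=\tfrac2{\alpha_2}\ge\tfrac2{\alpha_3}>\tfrac1{\alpha_3}$, and if $U=1-\tfrac1{\alpha_1}-\tfrac1{\alpha_2}$ then $U-L=1-\tfrac2{\alpha_1}\ge\tfrac13>\tfrac14\ge\tfrac1{\alpha_3}$, using $\alpha_1\ge3$ and $\alpha_3\ge4$. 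Any $t_3$ so produced lies in $(0,\tfrac12)$ and makes all four inequalities strict, so Lemma~\ref{lem:triangle} provides the desired non-abelian representation.
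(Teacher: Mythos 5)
Your proposal is correct and takes essentially the same approach as the paper: both arguments rest on Lemma~\ref{lem:triangle}, dispose of $(3,3,3)$ by observing that every non-central angle is forced to equal $\tfrac{2\pi}{3}$ so that \eqref{eq:theta-sum} becomes an equality, and otherwise set $\theta_1=\tfrac{2\pi}{\alpha_1}$, $\theta_2=\tfrac{2\pi}{\alpha_2}$ and seek $\theta_3$ among the multiples of $\tfrac{2\pi}{\alpha_3}$. Your pigeonhole estimate $U-L>\tfrac{1}{\alpha_3}$ is the same arithmetic as the paper's choice of the smallest multiple of $\tfrac{2\pi}{\alpha_3}$ exceeding $\theta_1-\theta_2$, which it bounds above by $\theta_1-\theta_2+\tfrac{2\pi}{\alpha_3}$ and then checks against the same upper constraints using $\alpha_1\geq 3$ and $\alpha_3\geq 4$.
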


\begin{proof}
We recall that $\alpha_1 \leq \alpha_2 \leq \alpha_3$ by convention, so the condition $(\alpha_1,\alpha_2,\alpha_3) \neq (3,3,3)$ is equivalent to $\alpha_3 \geq 4$.  The presentation~\eqref{eq:pi1-triangle} says that
\[ \Delta(\alpha_1,\alpha_2,\alpha_3) = \langle c_1,c_2,c_3 \mid c_1^{\alpha_1} = c_2^{\alpha_2} = c_3^{\alpha_3} = 1,\ c_1c_2c_3 = 1 \rangle, \]
so we attempt to set $\rho(c_i) = \cos(\theta_i) + v_i \sin(\theta_i)$ for imaginary unit quaternions $v_i$ and angles $\theta_i \in [0,\pi]$ such that $\alpha_i\theta_i\in 2\pi\Z$, and then we will apply Lemma~\ref{lem:triangle} to determine whether we have succeeded.

Assuming first that $\alpha_3 \geq 4$, we take $\theta_1 = \frac{2\pi}{\alpha_1}$ and $\theta_2 = \frac{2\pi}{\alpha_2}$, and we set
\[ \theta_3 = \min\left\{ \frac{2\pi m}{\alpha_3} \,\middle\vert\, m\in\Z,\ \frac{2\pi m}{\alpha_3} > \frac{2\pi}{\alpha_1}-\frac{2\pi}{\alpha_2} \right\}. \]
Then $\theta_1 \geq \theta_2$ and $\theta_3 > \theta_1 - \theta_2 \geq 0$ by definition, so
\[ \theta_3+\theta_1 > \theta_1 \geq \theta_2 \quad\text{and}\quad \theta_2+\theta_3 > \theta_1. \]
We also have an inequality $\theta_3 \leq \theta_1 - \theta_2 + \frac{2\pi}{\alpha_3}$, from which we deduce
\[ \theta_2 = \frac{2\pi}{\alpha_2} \geq \frac{2\pi}{\alpha_3} \ \Longrightarrow\ \theta_3 \leq \theta_1 - \left(\theta_2 - \frac{2\pi}{\alpha_3}\right) \leq \theta_1 < \theta_1 + \theta_2 \]
(note that $\theta_3 \leq \theta_1$ guarantees $\theta_3 \in [0,\pi]$) and also
\[ \theta_1+\theta_2+\theta_3 \leq 2\theta_1 + \frac{2\pi}{\alpha_3} \leq 2\pi\left(\frac{2}{\alpha_1} + \frac{1}{\alpha_3}\right) < 2\pi, \]
the last inequality using the assumptions $\alpha_1 \geq 3$ and $\alpha_3 \geq 4$.  We have verified each of the strict inequalities of \eqref{eq:theta-triangle} and \eqref{eq:theta-sum}, so we conclude by Lemma~\ref{lem:triangle} that $\rho$ exists and has non-abelian image.

Now we wish to show that there are no non-abelian $\rho$ when $(\alpha_1,\alpha_2,\alpha_3) \neq (3,3,3)$.  If we have a representation $\rho$ with $\rho(c_i) = e^{v_i\theta_i}$ for some angles $\theta_i \in [0,\pi]$, and $\rho$ has non-abelian image, then Lemma~\ref{lem:triangle} says that $\theta_1+\theta_2+\theta_3 < 2\pi$.  On the other hand, the conditions $\rho(c_i^3) = 1$ translate to $\theta_i \in \frac{2\pi}{3}\Z$ for each $i$, and if $\rho$ has non-abelian image then all of the $\theta_i$ are positive, so we must have $\theta_i \geq \frac{2\pi}{3}$ for each $i$.  But then
\[ \theta_1 + \theta_2 + \theta_3 \geq 2\pi, \]
which is a contradiction, so there are no non-abelian representations after all.
\end{proof}

\begin{theorem} \label{thm:triangle-reps}
Suppose that $n \geq 3$ and that $2 \leq \alpha_1 \leq \alpha_2 \leq \dots \leq \alpha_n$.  Then there is a non-abelian representation
\[ \Delta(\alpha_1,\dots,\alpha_n) \to SU(2) \]
if and only if $(\alpha_1,\dots,\alpha_n)$ does not have the form $(2,2,\dots,2,p,q)$ or $(3,3,3)$.
\end{theorem}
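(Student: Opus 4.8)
The plan is to establish the two implications separately, after noting the reformulation that $(\alpha_1,\dots,\alpha_n)$ is \emph{not} of the form $(2,\dots,2,p,q)$ exactly when at least three of the $\alpha_i$ are at least $3$; since the $\alpha_i$ are nondecreasing this says precisely that $\alpha_{n-2}\geq 3$. The ``only if'' direction then requires no new work: if the tuple is $(2,\dots,2,p,q)$ then every representation is abelian by Lemma~\ref{lem:triangle-start-2}, and if it is $(3,3,3)$ then the same holds by Proposition~\ref{prop:triangle-3}, since there $\alpha_1=3\geq 3$. So the content is the ``if'' direction: assuming $\alpha_{n-2}\geq 3$ and $(\alpha_1,\dots,\alpha_n)\neq(3,3,3)$, I must exhibit a non-abelian representation.

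I would argue by induction on $n$, exploiting the surjection $\Delta(\alpha_1,\dots,\alpha_n)\to\Delta(\alpha_1,\dots,\widehat{\alpha_j},\dots,\alpha_n)$ that kills a chosen generator $c_j$ and deletes it from the relation $c_1\cdots c_n=1$: any non-abelian representation of the quotient pulls back to one of the whole group, so it suffices to reduce to a smaller allowed tuple. The base case $n=3$ is covered by Proposition~\ref{prop:triangle-3} together with Lemma~\ref{lem:triangle-start-2}. For $n\geq 4$ the plan is to delete one generator so that the condition $\alpha_{n-2}\geq 3$ persists for the $(n-1)$-tuple: if at least four of the $\alpha_i$ are $\geq 3$ one may delete any generator, and if exactly three are $\geq 3$ then (as $n\geq 4$) some $\alpha_i$ equals $2$ and one deletes such a generator. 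A direct check shows that the only allowed tuples for which no such reduction to a smaller allowed tuple is possible are $(3,3,3,3)$ and $(2,3,3,3)$, so the induction bottoms out at these two cases (and at allowed triples).

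It remains to build non-abelian representations of $\Delta(3,3,3,3)$ and $\Delta(2,3,3,3)$ by hand, and here Lemma~\ref{lem:triangle} does essentially all the work once the four-fold product is split into two three-fold products. For $\Delta(3,3,3,3)$ I would set each $\rho(c_i)=e^{v_i\theta}$ with $\theta=\tfrac{2\pi}{3}$, fix any $\psi\in(0,\tfrac{2\pi}{3})$, and use Lemma~\ref{lem:triangle} applied to the angle triple $(\theta,\theta,\psi)$ --- all four inequalities of which are strict --- to realize $\rho(c_1)\rho(c_2)=e^{u\psi}$ as a \emph{non-abelian} pair; a second application to $(\theta,\theta,\psi)$ produces $\rho(c_3),\rho(c_4)$ of order $3$ whose product has angle $\psi$, which after conjugating the pair by a fixed element (harmless to the orders) equals $(e^{u\psi})^{-1}$, so that $c_1c_2c_3c_4=1$. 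For $\Delta(2,3,3,3)$, the order-$2$ generator must go to $-1$, the only order-$2$ element of $SU(2)$, reducing us to finding order-$3$ elements $A,B,C$ with $ABC=-1$ and $\langle A,B\rangle$ non-abelian. Applying Lemma~\ref{lem:triangle} to the triple $(\tfrac{2\pi}{3},\tfrac{2\pi}{3},\tfrac{\pi}{3})$, again all strict, gives $A,B$ of order $3$ and $D=e^{v\pi/3}$ with $ABD=1$ and $A,B$ non-commuting; then $C:=-D$ has order $3$ and $ABC=-(ABD)=-1$, as needed.

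The combinatorics of the induction and the ``only if'' direction are routine given the earlier results; the real work is the two base constructions, and within them the one genuinely delicate point is arranging in the $(3,3,3,3)$ case that the \emph{second} pair multiplies to the prescribed element $(\rho(c_1)\rho(c_2))^{-1}$ rather than merely to something in its conjugacy class. This is exactly what the freedom in the choice of the imaginary quaternions $v_i$ provides, and it is the step I would write out most carefully; the $(2,3,3,3)$ construction is cleaner because Lemma~\ref{lem:triangle} already returns elements with product exactly $1$, and the substitution $C=-D$ turns this into the product $-1$ that the central image of the order-$2$ generator demands.
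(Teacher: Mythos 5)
Your proposal is correct, and its skeleton is the same as the paper's: the ``only if'' direction is exactly Lemma~\ref{lem:triangle-start-2} plus Proposition~\ref{prop:triangle-3}, and the ``if'' direction runs through generator-killing surjections onto smaller groups $\Delta$, bottoming out at allowed triples together with precisely the two exceptional cases $\Delta(3,3,3,3)$ and $\Delta(2,3,3,3)$ (the paper collapses your one-generator-at-a-time induction into a single surjection onto the last three, or last four, entries --- an immaterial difference --- and your identification of the terminal tuples is correct). Where you genuinely differ is in the two base cases: the paper exhibits explicit representations, namely $(c_1,c_2,c_3,c_4)\mapsto(e^{2\pi i/3},e^{2\pi j/3},e^{-2\pi j/3},e^{-2\pi i/3})$ for $\Delta(3,3,3,3)$ and explicit imaginary quaternions $v_2,v_3,v_4$ for $\Delta(2,3,3,3)$, and checks the relations by direct computation, whereas you derive existence abstractly from Lemma~\ref{lem:triangle}. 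Your $\Delta(2,3,3,3)$ argument --- one application of the lemma to $\left(\tfrac{2\pi}{3},\tfrac{2\pi}{3},\tfrac{\pi}{3}\right)$ followed by the substitution $C=-D$ --- is clean and avoids the paper's somewhat tedious explicit verification. However, the ``genuinely delicate point'' you flag in the $\Delta(3,3,3,3)$ case is self-inflicted: the paper's representation is simply $(A,B,B^{-1},A^{-1})$ for a non-commuting order-$3$ pair $A,B$, so once a single application of Lemma~\ref{lem:triangle} (or just the explicit choice $A=e^{2\pi i/3}$, $B=e^{2\pi j/3}$) produces such a pair, the product telescopes and no second application of the lemma, and no conjugation, is needed; your two-application argument is correct but does extra work. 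One last nitpick: the relation $c_1^2=1$ only forces $\rho(c_1)\in\{\pm1\}$, so ``must go to $-1$'' overstates what you have shown; the choice $\rho(c_1)=-1$ is all your construction needs (and is indeed forced for non-abelian image, since $\rho(c_1)=1$ would make $\rho$ factor through $\Delta(3,3,3)$), but necessity is neither required nor justified as written.
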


\begin{proof}
The case $n=3$ follows from Lemma~\ref{lem:triangle-start-2} and Proposition~\ref{prop:triangle-3}, so we assume from now on that $n \geq 4$.  By Lemma~\ref{lem:triangle-start-2} we can also assume that $\alpha_{n-2} \geq 3$.  If $(\alpha_{n-2},\alpha_{n-1},\alpha_n) \neq (3,3,3)$, then we simply compose the surjection
\begin{align*}
\Delta(\alpha_1,\dots,\alpha_n) &\to \Delta(\alpha_{n-2},\alpha_{n-1},\alpha_n) \\
c_1,c_2,\dots,c_{n-3} &\mapsto 1
\end{align*}
with a non-abelian representation of the latter.  This completes the proof except in the case $(\alpha_{n-2},\alpha_{n-1},\alpha_n)=(3,3,3)$.

Now supposing that $(\alpha_{n-2},\alpha_{n-1},\alpha_n)=(3,3,3)$, we have a surjection
\[ \Delta(\alpha_1,\dots,\alpha_n) \to \Delta(\alpha_{n-3},\alpha_{n-2},\alpha_{n-1},\alpha_n) \]
and $\alpha_{n-3}$ is either $2$ or $3$, so it suffices to find non-abelian $SU(2)$ representations of both $\Delta(2,3,3,3)$ and $\Delta(3,3,3,3)$.  For $\Delta(3,3,3,3)$, we take
\begin{align*}
\rho: \langle c_1,\dots,c_4 \mid c_i^3=1\ \forall i, c_1c_2c_3c_4=1\rangle &\to SU(2) \\
(c_1,c_2,c_3,c_4) &\mapsto (e^{2\pi i/3}, e^{2\pi j/3}, e^{-2\pi j/3}, e^{-2\pi i/3})
\end{align*}
and verify that $\rho(c_i)^3=1$ for all $i$ and $\rho(c_1c_2c_3c_4)=1$.  For $\Delta(2,3,3,3)$, we take
\[ c_1 \mapsto -1, \qquad c_i \mapsto \cos\left(\tfrac{2\pi}{3}\right) + \sin\left(\tfrac{2\pi}{3}\right) v_i \quad (i=2,3,4) \]
where the $v_i$ are purely imaginary unit quaternions
\begin{align*}
v_2 &= i, & v_3 &= \frac{-i+2j+2k}{3}, & v_4 &= \frac{-i + (-1-\sqrt{3})j + (-1+\sqrt{3})k}{3},
\end{align*}
and it is again straightforward to check that $\rho(c_1)^2 = 1$ and $\rho(c_i)^3=1$ for $i=2,3,4$, and (slightly more tediously) that $\rho(c_1c_2c_3c_4)=1$.
\end{proof}

\subsection{Three singular fibers} \label{sec:small-sfs}

In this subsection, we classify $SU(2)$-cyclic Seifert fiber spaces with base $S^2$ and exactly three singular fibers.  Again, we write
\begin{equation} \label{eq:Y-three-singular-fibers}
Y = S^2((\alpha_1,\beta_1),(\alpha_2,\beta_2),(\alpha_3,\beta_3))
\end{equation}
with $2 \leq \alpha_1 \leq \alpha_2 \leq \alpha_3$ and $\gcd(\alpha_i,\beta_i)=1$ for all $i$, and by \eqref{eq:pi1-sfs} we have a presentation
\[ \pi_1(Y) = \langle c_1,c_2,c_3,h \mid [h,c_i]=c_i^{\alpha_i}h^{\beta_i}=1\,\,\forall i, c_1c_2c_3=1 \rangle. \]

\begin{theorem} \label{thm:small-seifert-fibered}
Let $Y$ be a Seifert fiber space of the form \eqref{eq:Y-three-singular-fibers}.  Then $Y$ is $SU(2)$-abelian if and only if either
\begin{itemize}
\item $(\alpha_1,\alpha_2,\alpha_3) = (2,4,4)$, or
\item $(\alpha_1,\alpha_2,\alpha_3) = (3,3,3)$ and $|H_1(Y;\Z)|$ is either even or infinite.
\end{itemize}
\end{theorem}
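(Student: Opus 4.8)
The plan is to split on the unordered tuple $(\alpha_1,\alpha_2,\alpha_3)$ and, in the two hard families, to reduce the question to a finite search over angles governed by Lemma~\ref{lem:triangle}. By Theorem~\ref{thm:triangle-reps}, if $(\alpha_1,\alpha_2,\alpha_3)$ is not of the form $(2,p,q)$ or $(3,3,3)$ then the orbifold group $\Delta(\alpha_1,\alpha_2,\alpha_3)$ already carries a non-abelian $SU(2)$-representation; composing with the surjection $\pi_1(Y)\twoheadrightarrow\Delta(\alpha_1,\alpha_2,\alpha_3)$ of \eqref{eq:pi1-triangle} produces a non-abelian representation of $\pi_1(Y)$, so $Y$ is not $SU(2)$-abelian. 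None of these tuples appears in the claimed list, so it remains to treat the families $(2,p,q)$ and $(3,3,3)$.

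For both families $\Delta$ has no non-abelian representation, so by Lemma~\ref{lem:triangle-h-image} every non-abelian $\rho\colon\pi_1(Y)\to SU(2)$ must satisfy $\rho(h)=-1$. I would exploit this to convert the problem into one about angles: writing $\rho(c_i)=e^{v_i\theta_i}$ with $\theta_i\in[0,\pi]$, the relation $c_i^{\alpha_i}h^{\beta_i}=1$ forces $\rho(c_i)^{\alpha_i}=(-1)^{\beta_i}$, hence $\theta_i=\tfrac{\ell_i\pi}{\alpha_i}$ for an integer $\ell_i$ with $\ell_i\equiv\beta_i\pmod 2$ and $0\le\ell_i\le\alpha_i$. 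Applying Lemma~\ref{lem:triangle} to the relation $c_1c_2c_3=1$ (and noting that $\rho(h)=-1$ is central, so the image is non-abelian exactly when $\langle\rho(c_i)\rangle$ is), a non-abelian representation with these angles exists precisely when the four inequalities \eqref{eq:theta-triangle}--\eqref{eq:theta-sum} hold strictly, which in turn forces each $\theta_i\in(0,\pi)$, i.e.\ $1\le\ell_i\le\alpha_i-1$. Thus $Y$ is $SU(2)$-abelian if and only if no admissible triple satisfies all four strict inequalities.

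In the $(3,3,3)$ case each $\theta_i$ is pinned down by the parity of $\beta_i$, namely $\theta_i=\tfrac{\pi}{3}$ when $\beta_i$ is odd and $\theta_i=\tfrac{2\pi}{3}$ when $\beta_i$ is even. Measuring in units of $\tfrac{\pi}{3}$, the strict inequalities exclude the pattern with exactly two odd $\beta_i$ (the triangle inequality fails, $2\not<1+1$) and the pattern with no odd $\beta_i$ (the sum equals $2\pi$), while the patterns with one or three odd $\beta_i$ do admit a non-abelian representation. Hence $Y$ is $SU(2)$-abelian exactly when the number of odd $\beta_i$ is even, i.e.\ when $\beta_1+\beta_2+\beta_3$ is even. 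The final step is to match this with the homology condition: abelianizing \eqref{eq:pi1-sfs} gives $|H_1(Y;\Z)|=9|\beta_1+\beta_2+\beta_3|$, with $H_1(Y;\Z)$ infinite precisely when the sum vanishes, so ``$|H_1(Y;\Z)|$ even or infinite'' is equivalent to ``$\beta_1+\beta_2+\beta_3$ even,'' completing this case.

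The $(2,p,q)$ case is where the real work lies. Here $\gcd(2,\beta_1)=1$ forces $\beta_1$ odd, so $\theta_1=\tfrac{\pi}{2}$, and the strict inequalities collapse to $|\theta_2-\theta_3|<\tfrac{\pi}{2}$ together with $\tfrac{\pi}{2}<\theta_2+\theta_3<\tfrac{3\pi}{2}$. A clean sufficient condition for a non-abelian representation is that both $\theta_2$ and $\theta_3$ can be placed in the open interval $(\tfrac{\pi}{4},\tfrac{3\pi}{4})$, since any two such angles satisfy all four inequalities automatically. I would show by an elementary counting argument that an admissible angle lands in $(\tfrac{\pi}{4},\tfrac{3\pi}{4})$ whenever $\alpha_i\ge 3$ and $\alpha_i\ne 4$: the interval of allowed integers $\ell_i\in(\tfrac{\alpha_i}{4},\tfrac{3\alpha_i}{4})$ then has length at least two (or, for $\alpha_i=3$, already contains one integer of each parity), so it meets the class $\ell_i\equiv\beta_i\pmod 2$. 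The genuinely exceptional index is $\alpha_i=4$, where $\gcd(4,\beta_i)=1$ forces $\beta_i$ odd and hence $\theta_i\in\{\tfrac{\pi}{4},\tfrac{3\pi}{4}\}$, i.e.\ the two endpoints. This is exactly what singles out $(2,4,4)$: there $\theta_2,\theta_3\in\{\tfrac{\pi}{4},\tfrac{3\pi}{4}\}$, and checking the four combinations shows each makes some inequality an equality, so $Y$ is $SU(2)$-abelian. The remaining tuples I would dispatch by direct construction: $p=2$ (then $\theta_2=\tfrac{\pi}{2}$ frees $\theta_3$ to be any admissible angle in $(0,\pi)$), $p=4<q$ (take $\theta_2=\tfrac{\pi}{4}$ and $\theta_3\in(\tfrac{\pi}{4},\tfrac{3\pi}{4})$, which exists since $q\ge 5$), and the single small tuple $(2,3,4)$ (for which an explicit admissible triple such as $(\tfrac{\pi}{2},\tfrac{\pi}{3},\tfrac{\pi}{4})$ or $(\tfrac{\pi}{2},\tfrac{2\pi}{3},\tfrac{\pi}{4})$ works), while all tuples with $p,q\in\{3\}\cup\{5,6,7,\dots\}$ are covered by the interval argument. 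The main obstacle is precisely this last family: carefully tracking the parity constraint $\ell_i\equiv\beta_i\pmod 2$ imposed by the Seifert data, and verifying that $(2,4,4)$ is the unique tuple for which \emph{every} admissible angle triple fails to make all four inequalities strict.
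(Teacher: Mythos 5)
Your proposal is correct and follows essentially the same route as the paper: compose with the orbifold quotient via Theorem~\ref{thm:triangle-reps} for generic tuples, force $\rho(h)=-1$ via Lemma~\ref{lem:triangle-h-image}, translate the relations into admissible angles $\theta_i=\ell_i\pi/\alpha_i$ with $\ell_i\equiv\beta_i\pmod 2$, and decide everything by the strict inequalities of Lemma~\ref{lem:triangle}, with the identical parity count in the $(3,3,3)$ case (Proposition~\ref{prop:333-cyclic}). The only difference is organizational: in the $(2,p,q)$ family the paper's Proposition~\ref{prop:some-alpha-is-2} makes one uniform choice of angle nearest $\frac{\pi}{2}$ and bounds it in $[\frac{\pi}{4},\frac{2\pi}{3}]$, whereas you use the open interval $(\frac{\pi}{4},\frac{3\pi}{4})$ plus a handful of explicit subcases, but both arguments isolate $(2,4,4)$ in the same way.
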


\begin{proof}
If $\alpha_1 \geq 3$ but $(\alpha_1,\alpha_2,\alpha_3) \neq (3,3,3)$, then Theorem~\ref{thm:triangle-reps} gives us a representation
\[ \pi_1(Y) \to \pi_1(Y)/\llangle h\rrangle \to SU(2) \]
with non-abelian image.  If instead $\alpha_1 = 2$, then in Proposition~\ref{prop:some-alpha-is-2} we construct a non-abelian $\rho: \pi_1(Y) \to SU(2)$ in all cases except for $(\alpha_1,\alpha_2,\alpha_3)=(2,4,4)$, where we prove that no such $\rho$ exists.  Finally, in Proposition~\ref{prop:333-cyclic} we determine exactly which $Y$ admit non-abelian $SU(2)$ representations in the case $(\alpha_1,\alpha_2,\alpha_3)=(3,3,3)$.
\end{proof}

\begin{proposition} \label{prop:some-alpha-is-2}
If $\alpha_1 = 2$, then there is a representation $\rho: \pi_1(Y) \to SU(2)$ with non-abelian image if and only if $(\alpha_1,\alpha_2,\alpha_3) \neq (2,4,4)$.
\end{proposition}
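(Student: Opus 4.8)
The plan is to first reduce to the case $\rho(h) = -1$. Since $\alpha_1 = 2$, the orbifold group $\Delta(2,\alpha_2,\alpha_3)$ admits only abelian representations by Lemma~\ref{lem:triangle-start-2}, so Lemma~\ref{lem:triangle-h-image} forces any non-abelian $\rho : \pi_1(Y) \to SU(2)$ to send $h \mapsto -1$. Conversely, a representation with $\rho(h) = -1$ is determined by the images $\rho(c_i)$, subject to $\rho(c_i)^{\alpha_i} = (-1)^{\beta_i}$ and $\rho(c_1)\rho(c_2)\rho(c_3) = 1$ (the centrality relations $[h,c_i]=1$ being automatic). Writing $\rho(c_i) = e^{v_i\theta_i}$ with $\theta_i \in [0,\pi]$, the power relation forces $\sin(\alpha_i\theta_i) = 0$ and $\cos(\alpha_i\theta_i) = (-1)^{\beta_i}$, i.e.\ $\theta_i = \tfrac{m_i\pi}{\alpha_i}$ for an integer $m_i \equiv \beta_i \pmod 2$ with $0 \le m_i \le \alpha_i$; call such an angle \emph{admissible}. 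Because $\gcd(2,\beta_1)=1$ forces $\beta_1$ odd, this pins down $\theta_1 = \tfrac{\pi}{2}$, and the parity condition $m_i \equiv \beta_i$, together with $\gcd(\alpha_i,\beta_i)=1$, is exactly what restricts the admissible $\theta_2,\theta_3$.

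Next I would feed these angles into Lemma~\ref{lem:triangle}, which (via the relation $c_1c_2c_3 = 1$) says that suitable $v_1,v_2,v_3$ producing a non-abelian $\rho$ exist precisely when the inequalities \eqref{eq:theta-triangle} and \eqref{eq:theta-sum} all hold strictly; crucially, the power relations constrain only the $\theta_i$, so the two conditions decouple. Substituting $\theta_1 = \tfrac{\pi}{2}$ collapses the four inequalities to $|\theta_2 - \theta_3| < \tfrac{\pi}{2}$ together with $\tfrac{\pi}{2} < \theta_2 + \theta_3 < \tfrac{3\pi}{2}$, an open diamond centered at $(\tfrac{\pi}{2},\tfrac{\pi}{2})$ meeting $[0,\pi]^2$ at its edge-midpoints. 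Everything now reduces to a lattice-point problem: for $(\alpha_1,\alpha_2,\alpha_3) \neq (2,4,4)$ I must exhibit admissible $\theta_2,\theta_3$ inside this region, and for $(2,4,4)$ show none exist. A clean sufficient condition is $\theta_2,\theta_3 \in (\tfrac{\pi}{4},\tfrac{3\pi}{4})$, and the key point is that for each $\alpha \geq 3$ with $\alpha \neq 4$ there is an admissible $m$ with $\tfrac{m\pi}{\alpha}$ in this open interval: for $\alpha \geq 5$ because $(\alpha/4,3\alpha/4)$ has length exceeding $2$ and hence contains consecutive integers of both parities, and for $\alpha = 3$ by inspection, since $\tfrac{\pi}{3}$ and $\tfrac{2\pi}{3}$ are both available. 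The denominator $\alpha = 4$ is precisely the failure point: there $\beta$ is odd, so the only admissible angles are $\tfrac{\pi}{4}$ and $\tfrac{3\pi}{4}$, both on the boundary.

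I would then split into cases. When $\alpha_2 = 2$ we have $\theta_1 = \theta_2 = \tfrac{\pi}{2}$, and the constraints reduce to $0 < \theta_3 < \pi$, which any admissible interior $\theta_3$ satisfies (and such a $\theta_3$ always exists); these cases are immediate. When $\alpha_2,\alpha_3 \ge 3$ with neither equal to $4$, both angles can be placed in $(\tfrac{\pi}{4},\tfrac{3\pi}{4})$ and we are done. The residual cases are $(2,3,4)$ and $(2,4,\alpha_3)$ with $\alpha_3 \geq 5$, for which a short explicit admissible pair satisfies the strict inequalities (for instance pairing $\tfrac{\pi}{3}$ with $\tfrac{\pi}{4}$, or using the boundary value $\theta_2 = \tfrac{\pi}{4}$ with an interior $\theta_3$), and the exceptional $(2,4,4)$, where $\theta_2,\theta_3$ are forced into $\{\tfrac{\pi}{4},\tfrac{3\pi}{4}\}$ and one checks that each of the four combinations makes one of the inequalities an equality, so by Lemma~\ref{lem:triangle} the only representations are abelian. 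I expect the main obstacle to be exactly this bookkeeping around the denominator $4$: showing that the parity-plus-coprimality constraints genuinely obstruct $(2,4,4)$ while still pinning down a strictly-feasible admissible choice in every neighboring triple, rather than any single difficult estimate.
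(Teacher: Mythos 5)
Your proposal is correct and follows essentially the same route as the paper: reduce to $\rho(h)=-1$ via Lemma~\ref{lem:triangle-h-image}, translate the relations $c_i^{\alpha_i}h^{\beta_i}=1$ into parity-constrained angles with $\theta_1=\pi/2$, apply Lemma~\ref{lem:triangle}, and observe that $(2,4,4)$ fails precisely because $\theta_2,\theta_3$ are forced into $\{\pi/4,3\pi/4\}$, where every combination makes one inequality an equality. The only difference is bookkeeping: the paper picks the admissible angle nearest $\pi/2$ for each $\alpha_j$ and bounds it by $\pi/4 \leq \theta_j \leq 2\pi/3$, whereas you reformulate the strict inequalities as an open diamond and split into cases around the denominator $4$ — both are sound.
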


\begin{proof}
Combining Theorem~\ref{thm:triangle-reps} with Lemma~\ref{lem:triangle-h-image} tells us that $\rho(h)=-1$, and moreover that $\rho(h)^{\beta_1} = -1$ since $\beta_1$ is coprime to $\alpha_1=2$.  A non-abelian representation $\rho: \pi_1(Y) \to SU(2)$ is thus equivalent to a non-abelian representation
\[ \rho': \langle c_1,c_2,c_3 \mid c_1c_2c_3 = 1 \rangle \]
with $\rho'(c_1)^2 = -1$, $\rho'(c_2)^{\alpha_2} = (-1)^{\beta_2}$, and $\rho'(c_3)^{\alpha_3} = (-1)^{\beta_3}$.  In particular, if we write $\rho'(c_i)=e^{v_i\theta_i}$ for some imaginary unit quaternions $v_i$ and angles $\theta_i \in [0,\pi]$, then these conditions amount to
\begin{align*}
\theta_1 &= \frac{\pi}{2}, &
\theta_2 &= \frac{(\beta_2 + 2m)\pi}{\alpha_2}, &
\theta_3 &= \frac{(\beta_3 + 2n)\pi}{\alpha_3}
\end{align*}
for some integers $m$ and $n$, and the $\theta_i$ must strictly satisfy the inequalities \eqref{eq:theta-triangle} and \eqref{eq:theta-sum} of Lemma~\ref{lem:triangle}.  We will accomplish this where possible by taking the $\theta_j$ as close to $\frac{\pi}{2}$ as possible.


For each of $j=2$ and $j=3$, we take
\[ \theta_j = \frac{(\lceil \alpha_j/2 \rceil -1)\pi}{\alpha_j} \text{ or } \frac{\lceil \alpha_j/2 \rceil \pi}{\alpha_j}, \]
depending on whether or not $\lceil \alpha_j/2 \rceil - 1 \equiv \beta_j\pmod{2}$.  We note that if $\alpha_j=2$, then $\beta_j$ must be odd and so $\theta_j=\frac{\pi}{2}$.  Otherwise, if $\alpha_j \geq 3$ then the larger of these values gives us
\[ \theta_j \leq \frac{((\alpha_j+1)/2)\pi}{\alpha_j} = \left(\frac{1}{2} + \frac{1}{2\alpha_j}\right)\pi \leq \frac{2\pi}{3}, \]
and the smaller value gives us a lower bound
\[ \theta_j \geq \begin{cases} \left(\frac{1}{2} - \frac{1}{2\alpha_j}\right) \pi, & \alpha_j\text{ odd} \\ \left(\frac{1}{2}-\frac{1}{\alpha_j}\right) \pi, & \alpha_j\text{ even}, \end{cases} \]
so combined we have $\frac{\pi}{4} \leq \theta_j \leq \frac{2\pi}{3}$ for any $\alpha_j \geq 2$, with equality on the left only if $\alpha_j=4$.  It follows that
\[ \theta_1+\theta_2+\theta_3 \leq \frac{\pi}{2}+\frac{2\pi}{3}+\frac{2\pi}{3} < 2\pi, \]
so the inequality \eqref{eq:theta-sum} holds strictly.  We also verify \eqref{eq:theta-triangle} by checking that
\begin{align*}
\theta_1+\theta_2 \geq \frac{\pi}{2} + \frac{\pi}{4} &= \frac{3\pi}{4} > \theta_3, \\
\theta_3+\theta_1 \geq \frac{\pi}{4} + \frac{\pi}{2} &= \frac{3\pi}{4} > \theta_2, \\
\theta_2+\theta_3 \geq \frac{\pi}{4} + \frac{\pi}{4} &= \frac{\pi}{2} = \theta_1,
\end{align*}
and all three inequalities are strict, except possibly the last one in the case $\theta_2=\theta_3=\frac{\pi}{4}$.  Thus the desired $\rho'$ exists and is non-abelian by Lemma~\ref{lem:triangle}, and hence likewise for $\rho$, except possibly when $\alpha_2=\alpha_3=4$.

Finally, in the case $(\alpha_1,\alpha_2,\alpha_3)=(2,4,4)$, we note that both $\beta_2$ and $\beta_3$ must be odd and so we are forced to take
\[ \theta_1 = \frac{\pi}{2}, \qquad \theta_2,\theta_3 \in \left\{\frac{\pi}{4},\frac{3\pi}{4}\right\}. \]
No matter how we choose $\theta_2$ and $\theta_3$, one of the inequalities in \eqref{eq:theta-triangle} and \eqref{eq:theta-sum} will in fact be an equality, so in this case Lemma~\ref{lem:triangle} says that any $\rho'$ must have abelian image, and hence so must any $\rho$.
\end{proof}

The sole remaining case with $n=3$ is $(\alpha_1,\alpha_2,\alpha_3) = (3,3,3)$, where the characterization of $SU(2)$-abelian $Y$ will make use of the following lemma.

\begin{lemma} \label{lem:homology-sfs}
The first homology of $Y$ has order
\[ |H_1(Y;\Z)| = |\alpha_1\alpha_2\beta_3 + \alpha_1\beta_2\alpha_3 + \beta_1\alpha_2\alpha_3|, \]
where we say that $|H_1(Y;\Z)|=0$ if and only if $H_1(Y;\Z)$ is infinite.
\end{lemma}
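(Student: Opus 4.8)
The plan is to compute $H_1(Y;\Z)$ directly as the abelianization of the group $\pi_1(Y)$ presented in \eqref{eq:pi1-sfs}, and then to read off its order as the absolute value of a determinant.

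First I would abelianize the presentation. Writing the four generators additively as $C_1,C_2,C_3,H$, the commutator relations $[h,c_i]=1$ become vacuous, each relation $c_i^{\alpha_i}h^{\beta_i}=1$ becomes $\alpha_i C_i + \beta_i H = 0$, and the relation $c_1c_2c_3=1$ becomes $C_1+C_2+C_3=0$. Hence $H_1(Y;\Z)$ is the cokernel of the integer matrix
\[ M = \begin{pmatrix} \alpha_1 & 0 & 0 & \beta_1 \\ 0 & \alpha_2 & 0 & \beta_2 \\ 0 & 0 & \alpha_3 & \beta_3 \\ 1 & 1 & 1 & 0 \end{pmatrix}, \]
whose columns correspond to $C_1,C_2,C_3,H$ and whose four rows record the surviving relations.

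Next I would invoke the standard structure theory: for a finitely generated abelian group presented by a \emph{square} integer matrix $M$, the Smith normal form shows the group is finite precisely when $\det M \neq 0$, in which case its order is $|\det M|$, and infinite exactly when $\det M = 0$. Here the number of generators equals the number of relations, so $M$ is square and this applies verbatim; this is the only step that requires any care, and it is the reason the clean statement (with $|H_1(Y;\Z)|=0$ signalling infinitude) is even available.

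It then remains only to compute $\det M$. Expanding along the last row (the one encoding $C_1+C_2+C_3=0$) and evaluating the three resulting $3\times 3$ minors gives
\[ \det M = -\bigl(\alpha_1\alpha_2\beta_3 + \alpha_1\beta_2\alpha_3 + \beta_1\alpha_2\alpha_3\bigr), \]
so that $|H_1(Y;\Z)| = |\det M| = |\alpha_1\alpha_2\beta_3 + \alpha_1\beta_2\alpha_3 + \beta_1\alpha_2\alpha_3|$ as claimed, with this quantity vanishing exactly when $H_1(Y;\Z)$ is infinite. There is no genuine obstacle in this argument; it is a routine determinant computation once the reduction to the square presentation matrix is in place.
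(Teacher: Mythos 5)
Your proposal is correct and follows exactly the paper's own argument: abelianize the presentation \eqref{eq:pi1-sfs} to express $H_1(Y;\Z)$ as the cokernel of the same $4\times 4$ matrix, and read off the order (or infinitude) from the determinant $-\alpha_1\alpha_2\beta_3 - \alpha_1\beta_2\alpha_3 - \beta_1\alpha_2\alpha_3$. The only difference is that you spell out the Smith normal form justification, which the paper leaves implicit.
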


\begin{proof}
We abelianize the presentation \eqref{eq:pi1-sfs} of $\pi_1(Y)$ to compute that
\[ H_1(Y;\Z) = \coker \begin{pmatrix} \alpha_1 & 0 & 0 & \beta_1 \\ 0 & \alpha_2 & 0 & \beta_2 \\ 0 & 0 & \alpha_3 & \beta_3 \\ 1 & 1 & 1 & 0 \end{pmatrix}, \]
and the order of $H_1(Y;\Z)$ is given by the absolute value of the determinant $-\alpha_1\alpha_2\beta_3 - \alpha_1\beta_2\alpha_3 - \beta_1\alpha_2\alpha_3$ of this matrix.
\end{proof}

\begin{proposition} \label{prop:333-cyclic}
If $(\alpha_1,\alpha_2,\alpha_3) = (3,3,3)$, then $Y$ is $SU(2)$-abelian if and only if $|H_1(Y;\Z)|$ is even.
\end{proposition}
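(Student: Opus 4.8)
The plan is to use Lemma~\ref{lem:triangle-h-image} to pin down $\rho(h)$, then reduce the existence of a non-abelian representation to a finite check of the inequalities in Lemma~\ref{lem:triangle}, and finally match the resulting parity condition against the homology formula of Lemma~\ref{lem:homology-sfs}. First I would invoke Proposition~\ref{prop:triangle-3}, which says that $\Delta(3,3,3)$ admits no non-abelian $SU(2)$-representation; Lemma~\ref{lem:triangle-h-image} then forces $\rho(h)=-1$ for every non-abelian $\rho:\pi_1(Y)\to SU(2)$. Under this constraint the relation $c_i^3 h^{\beta_i}=1$ becomes $\rho(c_i)^3=(-1)^{\beta_i}$. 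Writing $\rho(c_i)=e^{v_i\theta_i}$ with $\theta_i\in[0,\pi]$ and cubing, I would observe that $\theta_i\in\{0,\tfrac{2\pi}{3}\}$ when $\beta_i$ is even and $\theta_i\in\{\tfrac{\pi}{3},\pi\}$ when $\beta_i$ is odd.

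Next I would argue that these angles are in fact forced. If any $\rho(c_i)$ is central, i.e.\ $\theta_i\in\{0,\pi\}$, then the relation $c_1c_2c_3=1$ makes the other two images equal up to sign to each other's inverse, so the whole image is abelian; hence for a non-abelian $\rho$ we must take $\theta_i=\tfrac{2\pi}{3}$ when $\beta_i$ is even and $\theta_i=\tfrac{\pi}{3}$ when $\beta_i$ is odd. With the $\theta_i$ pinned down, Lemma~\ref{lem:triangle} tells us that a non-abelian $\rho$ exists precisely when the three triangle inequalities and the bound $\theta_1+\theta_2+\theta_3\leq 2\pi$ all hold strictly.

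Letting $k$ denote the number of even $\beta_i$, the multiset of angles consists of $k$ copies of $\tfrac{2\pi}{3}$ and $3-k$ copies of $\tfrac{\pi}{3}$, so the sum equals $\tfrac{(k+3)\pi}{3}$. I would then check the four cases $k=0,1,2,3$ directly: for $k=0$ and $k=2$ all four inequalities are strict, so a non-abelian $\rho$ exists; for $k=1$ the inequality $\tfrac{\pi}{3}+\tfrac{\pi}{3}\geq\tfrac{2\pi}{3}$ is an equality; and for $k=3$ the sum $\theta_1+\theta_2+\theta_3=2\pi$ is an equality, so in these two cases Lemma~\ref{lem:triangle} yields only abelian representations. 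Thus a non-abelian $\rho$ exists if and only if $k$ is even.

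Finally I would translate this into homology. By Lemma~\ref{lem:homology-sfs} and $\alpha_1=\alpha_2=\alpha_3=3$, we have $|H_1(Y;\Z)|=9|\beta_1+\beta_2+\beta_3|$, which is even (with the convention that the infinite case is recorded as $0$) exactly when $\beta_1+\beta_2+\beta_3$ is even, i.e.\ when the number $3-k$ of odd $\beta_i$ is even, i.e.\ when $k$ is odd. Combining the two equivalences, $Y$ is $SU(2)$-abelian if and only if no non-abelian $\rho$ exists, if and only if $k$ is odd, if and only if $|H_1(Y;\Z)|$ is even. The only delicate points I anticipate are justifying that the angles are forced and tracking exactly which boundary inequality degenerates in the $k=1$ and $k=3$ cases; the rest is routine arithmetic.
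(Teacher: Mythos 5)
Your proposal is correct and follows essentially the same route as the paper's proof: force $\rho(h)=-1$ via Lemma~\ref{lem:triangle-h-image} (the paper cites Theorem~\ref{thm:triangle-reps} where you cite Proposition~\ref{prop:triangle-3}, but these have the same content here), pin the angles to $\tfrac{\pi}{3}$ or $\tfrac{2\pi}{3}$ by non-abelianness, apply Lemma~\ref{lem:triangle}'s strictness criterion, and compare with Lemma~\ref{lem:homology-sfs}. Your case analysis by the number $k$ of even $\beta_i$ is just a cosmetic reorganization of the paper's parity argument, and your explicit justification that $\theta_i\notin\{0,\pi\}$ fills in a step the paper only asserts.
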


\begin{proof}
Lemma~\ref{lem:homology-sfs} tells us that $|H_1(Y;\Z)| = 9 \left| \sum_{i=1}^3 \beta_i \right|$.  Thus $|H_1(Y;\Z)|$ is even (which includes the case $\sum_i \beta_i = 0$, where $H_1(Y)$ is infinite) if and only if the number of $\beta_i$ which are odd is exactly $0$ or $2$.  In any case, some two of the $\beta_i$ must have the same parity, so we will assume without loss of generality that $\beta_1 \equiv \beta_2 \pmod{2}$.

Given a non-abelian representation $\rho: \pi_1(Y) \to SU(2)$, we write
\[ \rho(c_i) = e^{v_i\theta_i}, \qquad 1 \leq i \leq 3 \]
where $0 \leq \theta_i \leq \pi$ and the $v_i$ are imaginary unit quaternions for each $i$.  Again by Theorem~\ref{thm:triangle-reps} and Lemma~\ref{lem:triangle-h-image} we must have $\rho(h)=-1$, so $\rho$ is a valid representation if and only if $\rho(c_1c_2c_3)=1$ is satisfied and so is (for each $i$)
\[ \rho(c_i)^3 = (-1)^{\beta_i}\ \Longleftrightarrow\ e^{v_i \cdot 3\theta_i} = e^{v_i\cdot \beta_i\pi} \ \Longleftrightarrow\ \theta_i = \frac{(\beta_i+2n_i)\pi}{3} \]
for some integers $n_i$.  For each $i$ we have $\theta_i \not\in \{0,\pi\}$ since $\rho$ is non-abelian, and so
\[ \theta_i = \begin{cases} \pi/3, & \beta_i\text{ odd} \\ 2\pi/3, & \beta_i\text{ even}. \end{cases} \]

We now observe that if all three of the $\beta_i$ are even, then $\theta_1+\theta_2+\theta_3 = 2\pi$; and if only one is even then it must be $\beta_3$ and so $\theta_1+\theta_2 = \frac{\pi}{3}+\frac{\pi}{3} = \frac{2\pi}{3} = \theta_3$.  Thus in either of these cases Lemma~\ref{lem:triangle} says that $\rho$ cannot be non-abelian after all.  On the other hand, if all three are odd or exactly one is odd then
\[ (\theta_1,\theta_2,\theta_3) = \left(\frac{\pi}{3},\frac{\pi}{3},\frac{\pi}{3}\right) \text{ or } \left(\frac{2\pi}{3},\frac{2\pi}{3},\frac{\pi}{3}\right), \]
and all of the inequalities \eqref{eq:theta-triangle} and \eqref{eq:theta-sum} are strictly satisfied, so a non-abelian $\rho$ does exist as claimed.  We conclude that such a $\rho$ exists if and only if exactly one or three of the $\beta_i$ are odd, or equivalently if and only if $|H_1(Y;\Z)|$ is odd.
\end{proof}

\subsection{Four or more singular fibers}

In this subsection we complete the proof of Theorem~\ref{thm:sfs-over-s2} by considering Seifert fibered manifolds
\begin{equation} \label{eq:Y-many-singular-fibers}
Y = S^2((\alpha_1,\beta_1),(\alpha_2,\beta_2),\dots,(\alpha_n,\beta_n))
\end{equation}
where $n \geq 4$ and $2 \leq \alpha_1 \leq \alpha_2 \leq \dots \leq \alpha_n$.

\begin{theorem} \label{thm:large-seifert-fibered}
Let $Y$ be a Seifert fiber space of the form \eqref{eq:Y-many-singular-fibers}. Then there is a representation
\[ \pi_1(Y) \to SU(2) \]
with non-abelian image.
\end{theorem}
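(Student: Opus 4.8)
The plan is to split into two cases according to whether $(\alpha_1,\dots,\alpha_n)$ has the exceptional form $(2,\dots,2,p,q)$. Since $n \geq 4$, the tuple has at least four entries, so it cannot equal $(3,3,3)$; hence Theorem~\ref{thm:triangle-reps} tells us that the orbifold group $\Delta(\alpha_1,\dots,\alpha_n)$ admits a non-abelian $SU(2)$-representation precisely when $(\alpha_1,\dots,\alpha_n)$ is \emph{not} of the form $(2,\dots,2,p,q)$. In that case I would simply compose such a representation with the surjection $\pi_1(Y) \twoheadrightarrow \pi_1(Y)/\llangle h\rrangle = \Delta(\alpha_1,\dots,\alpha_n)$ from \eqref{eq:pi1-triangle} to obtain a non-abelian representation of $\pi_1(Y)$. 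So the entire content of the theorem lies in the remaining case $(\alpha_1,\dots,\alpha_n) = (2,\dots,2,p,q)$, where I write $k := n-2 \geq 2$ for the number of $2$'s.

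In this case Lemma~\ref{lem:triangle-start-2} says $\Delta$ has no non-abelian representation, so by Lemma~\ref{lem:triangle-h-image} any non-abelian $\rho: \pi_1(Y) \to SU(2)$ must satisfy $\rho(h) = -1$, and I would build such a $\rho$ directly. The relations $c_i^{\alpha_i}h^{\beta_i}=1$ then read $\rho(c_i)^{\alpha_i} = (-1)^{\beta_i}$: for $i \leq k$ we have $\alpha_i = 2$ with $\beta_i$ odd, forcing $\rho(c_i)^2 = -1$, i.e.\ $\rho(c_i) = v_i$ is a unit imaginary quaternion, while for $i \in \{n-1,n\}$ the condition only restricts the angle of $\rho(c_i)$ to a discrete set and leaves its axis free. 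The single remaining relation to arrange is the product relation $v_1 \cdots v_k \cdot \rho(c_{n-1})\rho(c_n) = 1$.

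The key technical tool I would isolate is the elementary fact that a product $w_1 w_2$ of two unit imaginary quaternions can equal any prescribed $u \in SU(2)$, and that $w_1,w_2$ may be taken to be non-commuting exactly when $u \neq \pm 1$ (geometrically, every rotation is a product of two half-turns); iterating, a product of any $k \geq 2$ unit imaginary quaternions realizes all of $SU(2)$. With this in hand I would first choose $\rho(c_{n-1})$ and $\rho(c_n)$ to be non-central---always possible, since $\gcd(\alpha_i,\beta_i)=1$ guarantees that the set of admissible angles meets the open interval $(0,\pi)$---and to have non-parallel axes, so that $z := \rho(c_{n-1})\rho(c_n) \neq \pm 1$. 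Then I solve $v_1 \cdots v_k = z^{-1}$ with unit imaginary $v_i$, choosing $v_3,\dots,v_k$ (if any) first and then picking $v_1,v_2$ with $v_1 v_2 = z^{-1}(v_3\cdots v_k)^{-1}$ non-commuting, which is possible because this target is not $\pm 1$. This yields a valid representation whose image already contains the non-commuting pair $v_1,v_2$, and hence is non-abelian, completing the case and the theorem.

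The main obstacle---and the reason the family $(2,\dots,2,p,q)$ cannot be handled by pulling back from $\Delta$---is exactly that its orbifold group has only abelian representations, forcing one into the quaternionic construction with $\rho(h)=-1$. The only real care needed is to confirm that non-central admissible choices of $\rho(c_{n-1}),\rho(c_n)$ always exist and that $z^{-1}$ can be kept away from $\pm 1$; once these are verified, the slack provided by the $k \geq 2$ free imaginary factors makes the product relation easy to satisfy while preserving non-commutativity.
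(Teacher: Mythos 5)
Your proposal is correct, and while your generic case (composing the surjection $\pi_1(Y) \twoheadrightarrow \Delta(\alpha_1,\dots,\alpha_n)$ with a non-abelian representation supplied by Theorem~\ref{thm:triangle-reps}) coincides with the paper's, your treatment of the exceptional family $(2,\dots,2,p,q)$ is genuinely different. The paper proceeds by induction on $n$: it drops one order-2 fiber to form $Y'$, uses Theorem~\ref{thm:triangle-reps} and Lemma~\ref{lem:triangle-h-image} to see that a non-abelian $\rho'\colon \pi_1(Y')\to SU(2)$ satisfies $\rho'(h')=-1$ and, after conjugation, $\rho'(c'_2)=i$, and then extends $\rho'$ to $\pi_1(Y)$ by setting $\rho(c_1)=j$ and $\rho(c_2)=k$ (splitting $i$ into a product of two anticommuting imaginary units); its base case $n=4$ invokes Proposition~\ref{prop:some-alpha-is-2} and needs a separate hand-built representation for $(2,2,4,4)$. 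You instead give a single direct construction: $\rho(h)=-1$, the order-2 generators map to unit imaginary quaternions, $c_{n-1}$ and $c_n$ map to non-central admissible elements with non-parallel axes, and the product relation is absorbed using the fact that products of two unit imaginary quaternions sweep out all of $SU(2)$. This buys uniformity --- no induction, no appeal to Proposition~\ref{prop:some-alpha-is-2}, and no special case for $(2,2,4,4)$ --- whereas the paper's route costs little in context, since Proposition~\ref{prop:some-alpha-is-2} is needed for its $n=3$ classification anyway; the underlying germ (the paper's $i=jk$ splitting versus your ``every rotation is a product of two half-turns'') is the same. One small imprecision: when $n-2\geq 3$ you assert that the target $z^{-1}(v_3\cdots v_{n-2})^{-1}$ is not $\pm1$, which is not automatic for arbitrary choices of $v_3,\dots,v_{n-2}$. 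This is harmless, and there are two immediate fixes: choose $v_3,\dots,v_{n-2}$ so that their product avoids $\pm z^{-1}$ (a generic condition), or drop the requirement that $v_1,v_2$ be non-commuting altogether --- every element of $SU(2)$, including $\pm1$, is a product of two unit imaginary quaternions, and the image is already non-abelian because $\rho(c_{n-1})$ and $\rho(c_n)$ do not commute.
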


\begin{proof}
Since $\pi_1(Y)$ surjects onto the orbifold fundamental group $\Delta(\alpha_1,\dots,\alpha_n)$, the result follows from Theorem~\ref{thm:triangle-reps} in all cases except where
\[ (\alpha_1,\dots,\alpha_{n-2},\alpha_{n-1},\alpha_n) = (2,\dots,2,p,q). \]
Assume from now on that we are in this case, and let
\begin{align*}
Y' &= S^2((\alpha_2,\beta_2),(\alpha_3,\beta_3),\dots,(\alpha_n,\beta_n)) \\
\pi_1(Y') &= \langle c'_2,\dots,c'_n,h' \mid [h',c'_i]=(c'_i)^{\alpha_i}(h')^{\beta_i}=1\ \forall i, c'_2\dots c'_n=1\rangle.
\end{align*}
We will proceed by induction on $n$.

We first observe, again by Theorem~\ref{thm:triangle-reps} and Lemma~\ref{lem:triangle-h-image}, that a non-abelian representation $\rho': \pi_1(Y') \to SU(2)$ must satisfy $\rho'(h')=-1$.  Now $\beta_2$ is odd since it is coprime to $\alpha_2=2$, so $\rho'(c'_2)^2 = -1$, and thus we can replace $\rho'$ by a conjugate to ensure that $\rho'(c'_2)=i$.

If we have such a $\rho'$, we can now construct $\rho: \pi_1(Y) \to SU(2)$ by
\begin{align*}
\rho(c_1) &= j & \rho(c_2) &= k \\
\rho(h) &= -1 & \rho(c_i) &= \rho'(c'_i), \quad i \geq 3.
\end{align*}
Clearly the image is non-abelian, with $\rho(h)$ central.  It satisfies $\rho(c_i)^{\alpha_i} \rho(h)^{\beta_i} = 1$ for $i=1,2$ because $\alpha_1=\alpha_2=2$ implies that $\beta_1$ and $\beta_2$ are odd, and for $i \geq 3$ because the corresponding relations hold for $\rho'$.  Moreover, we have
\[ \rho(c_1\dots c_n) = j\cdot k \cdot \rho'(c'_3\dots c'_n) = i \cdot \rho'(c'_2)^{-1} = 1 \]
and so $\rho$ is indeed a representation of $\pi_1(Y)$.

For the base case $n=4$, we apply Proposition~\ref{prop:some-alpha-is-2} to get a non-abelian representation $\pi_1(Y') \to SU(2)$, and hence $\pi_1(Y) \to SU(2)$ by the above argument, in all cases except $(\alpha_1,\alpha_2,\alpha_3,\alpha_4)=(2,2,4,4)$.  In this last case, we construct the representation by hand: all of the $\beta_i$ must be odd, and we have
\[ \pi_1(Y) = \langle c_1,c_2,c_3,c_4,h \mid [h,c_i] = c_i^{\alpha_i}h^{\beta_i} = 1\ \forall i, c_1c_2c_3c_4=1 \rangle, \]
so we define $\rho: \pi_1(Y) \to SU(2)$ by letting $\rho(h)=-1$ and
\begin{align*}
\rho(c_1) &= i, & \rho(c_3) &= e^{\pi j/4}, \\
\rho(c_2) &= -i, & \rho(c_4) &= e^{-\pi j/4}.
\end{align*}
This completes the base case, and then for all $n \geq 5$ we have a non-abelian representation of $\pi_1(Y')$, so we get one for $\pi_1(Y)$ as well and the theorem follows by induction.
\end{proof}

\section{Seifert fibrations with other bases} \label{sec:sfs-over-other-bases}

We have so far restricted our attention to Seifert fibrations with base $S^2$, but we can in fact consider arbitrary closed surfaces.  In this section we will prove the following.

\begin{theorem} \label{thm:sfs-over-other-bases}
Let $Y$ be a Seifert fiber space whose base orbifold is not $S^2$ with any number of singular fibers.  Then $Y$ is $SU(2)$-abelian if and only if $Y$ is a lens space, $\RP^3\#\RP^3$, or a circle bundle over $T^2$ with even Euler number.
\end{theorem}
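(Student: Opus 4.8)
The plan is to push everything possible onto the orbifold fundamental group of the base. Write $B$ for the base orbifold and $F$ for its underlying surface; there is a surjection $\pi_1(Y) \to \pi_1^{\mathrm{orb}}(B) = \pi_1(Y)/\llangle h\rrangle$, so any non-abelian representation of $\pi_1^{\mathrm{orb}}(B)$ pulls back to one of $\pi_1(Y)$ and witnesses that $Y$ is not $SU(2)$-abelian. Since $F\neq S^2$ by hypothesis, $F$ is $T^2$, an orientable surface of genus $\geq 2$, $\RP^2$, or a non-orientable surface of genus $\geq 2$. First I would show that $\pi_1^{\mathrm{orb}}(B)$ has a non-abelian representation in every case except $F=T^2$ with no cone points and $F=\RP^2$ with at most one cone point of order $\geq 3$. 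When $F$ has genus $\geq 2$ (orientable or not), killing all cone generators $c_i$ surjects $\pi_1^{\mathrm{orb}}(B)$ onto the closed surface group, and the assignments $a_i\mapsto i,\ b_i\mapsto j$ (so each $[a_i,b_i]=-1$) or $a_1\mapsto i,\ a_2\mapsto j$ (so $a_1^2a_2^2=1$) give image $\langle i,j\rangle$, which is non-abelian. When $F=T^2$ carries a cone point, collapsing all but one yields $\langle a,b\mid [a,b]^{\alpha_1}=1\rangle$, on which $a\mapsto i,\ b\mapsto e^{j\pi/\alpha_1}$ is non-abelian. When $F=\RP^2$ carries two cone points of order $\geq 3$, sending them to the non-commuting elements $e^{i\cdot 2\pi/\alpha_1}$ and $e^{j\cdot 2\pi/\alpha_2}$ and sending $a$ to any square root of the inverse of their product is non-abelian. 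Finally, if $F=\RP^2$ has at most one cone point of order $\geq 3$, a short argument shows every representation of $\pi_1^{\mathrm{orb}}(B)$ has abelian image: the order-two generators map to $\pm1$ and $\rho(a)$ is pinned to the axis of the remaining generator. This shows the reduction is sharp.

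This leaves two families to treat directly. For $F=T^2$ with no cone points, $Y$ is a circle bundle of some Euler number $e$ with $\pi_1(Y)=\langle a,b,h\mid [a,h]=[b,h]=1,\ [a,b]=h^e\rangle$. As $h$ is central, $\rho(h)\in\{\pm1\}$ for any non-abelian $\rho$; then $[\rho(a),\rho(b)]=(-1)^e$, so a non-abelian representation exists if and only if $e$ is odd, realized by $\rho(a)=i,\ \rho(b)=j$. Thus the $SU(2)$-abelian circle bundles over $T^2$ are exactly those of even Euler number, including $T^3$.

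The remaining and most delicate family is $F=\RP^2$ with at most one cone point of order $\geq 3$; here $h$ is no longer central, the orientation-reversing generator instead satisfying $aha^{-1}=h^{-1}$. The key local observation is that if $\rho(h)=e^{v\phi}$ with $\phi\in(0,\pi)$, then $aha^{-1}=h^{-1}$ forces $\rho(a)$ to be a purely imaginary unit quaternion orthogonal to $v$, so $\rho(a)^2=-1$; moreover every $\rho(c_i)$ commutes with $\rho(h)$ and hence lies on the axis $v$. In the simplest subcase, with no cone point or a single one of order $\alpha$, eliminating the cone generator reduces the presentation to $\langle a,h\mid aha^{-1}=h^{-1},\ a^{2\alpha}=h^{e}\rangle$, where $e$ is determined by the Seifert data and its parity is constrained by $\gcd(\alpha,\beta)=1$. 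Matching $\rho(a)^{2\alpha}=(-1)^\alpha$ against $\rho(h)^e=e^{ve\phi}$ then reduces to finding an integer $m$ with $\phi=m\pi/e\in(0,\pi)$ and $m\equiv\alpha\pmod 2$; such an $m$ exists precisely when $|e|\geq 2$, whereas $|e|=1$ forces $\pi_1(Y)$ to be cyclic (a lens space) and $e=0,\ \alpha=1$ gives $\Z/2\ast\Z/2=\pi_1(\RP^3\#\RP^3)$. The general subcase, with additional order-two cone points, runs identically except that each relation $c_i^2h^{\beta_i}=1$ pins $\rho(c_i)$ to the axis $v$ up to a binary sign, and one must choose these signs so that the product relation can force $\rho(a)^2=-1$.

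I expect the genuine difficulty to be exactly this last non-orientable analysis: fixing the reduced presentation together with the correct Euler-number and parity conventions, carrying out the sign bookkeeping over the order-two cone points (closely analogous to the branch choices in Propositions~\ref{prop:some-alpha-is-2} and~\ref{prop:333-cyclic}), and checking that the configurations admitting no solution are exactly those for which $Y$ is a lens space or $\RP^3\#\RP^3$. By contrast, the higher-genus reductions and the $T^2$ computation are routine once the centrality and common-axis observations are in hand.
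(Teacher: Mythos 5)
Your overall architecture is sensible and genuinely different from the paper's: the orbifold-level reduction, the identification of the exceptional bases ($T^2$ with no cone points, and $\RP^2$ with at most one cone point of order $\geq 3$), the circle-bundle computation over $T^2$, and your ``simplest subcase'' over $\RP^2$ (no cone point, or a single one) all check out. For comparison, the paper disposes of $\RP^2$ with at most one cone point not by computation but by passing to the fiberwise double cover, which is $S^3$, a lens space, or $S^1\times S^2$, so that $Y$ is a lens space, a prism manifold, or $\RP^3\#\RP^3$, with prism manifolds excluded by the $S^2$-base classification.

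However, your treatment of the last family --- $\RP^2$ with order-two cone points present --- has a genuine gap: you analyze only representations with $\rho(h)\neq\pm1$ (your ``key local observation'' requires $\phi\in(0,\pi)$) and, implicitly via the orbifold group, those with $\rho(h)=1$; you never consider $\rho(h)=-1$. This case cannot be folded into the others: when $\rho(h)=-1$, an order-two relation $c_i^2h^{\beta_i}=1$ (with $\beta_i$ necessarily odd) becomes $\rho(c_i)^2=-1$, so $\rho(c_i)$ can be \emph{any} purely imaginary unit quaternion --- it is not pinned to the axis of $\rho(h)$ --- and this freedom is exactly what produces non-abelian representations here. Concretely, take $Y=\RP^2((2,1),(2,1))$. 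In your framework there is no representation with $\rho(h)\neq\pm1$ at all (the relations force $e^{v\phi}=\pm1$, i.e.\ $\phi\in\pi\Z$), and every representation with $\rho(h)=1$ is abelian, so your plan as written would classify $Y$ as $SU(2)$-abelian; but $Y$ is neither a lens space (Corollary~\ref{cor:lens-space-rp2} requires $n=1$) nor $\RP^3\#\RP^3$ (here $H_1(Y;\Z)\cong\Z/4\Z\oplus\Z/4\Z$), so this contradicts the very theorem you are proving. The representation you are missing is $\rho(h)=-1$, $\rho(c_1)=i$, $\rho(c_2)=j$, and $\rho(a)$ any square root of $(ij)^{-1}=-k$, which is visibly non-abelian. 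This is precisely the device in the paper's Proposition~\ref{prop:rp2-n-large}: for \emph{every} fibration over $\RP^2$ with $n\geq2$ cone points one sets $\rho(h)=-1$ and chooses the axes $v_k$ of the $\rho(c_k)$ freely, so that two of them fail to commute. In your simplest subcase the omission happens to be harmless, since there $\rho(h)=-1$ forces the image to be generated by $\rho(a)$ and $-1$, hence abelian; but once two or more cone points are present it is fatal, so the ``sign bookkeeping'' you flagged as the main difficulty cannot be completed within the framework you set up.
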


We begin by ruling out all possible bases other than $\RP^2$ and $T^2$.  In the following subsections we will show (Proposition~\ref{prop:sfs-over-t2}) that the $SU(2)$-abelian manifolds with base $T^2$ are precisely the claimed circle bundles, and then (Proposition~\ref{prop:sfs-over-rp2}) that the ones with base $\RP^2$ are either lens spaces or $\RP^3\#\RP^3$. 

\begin{proposition} \label{prop:possible-bases}
Let $Y$ be an $SU(2)$-abelian Seifert fibered space.  Then the base $B$ is either $S^2$, $\RP^2$, or $T^2$.
\end{proposition}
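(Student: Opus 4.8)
The plan is to produce a non-abelian $SU(2)$ representation of $\pi_1(Y)$ whenever the underlying closed surface $F$ of the base orbifold $B$ is none of $S^2$, $\RP^2$, $T^2$; contrapositively, this forces an $SU(2)$-abelian $Y$ to have one of these three surfaces as its base. The engine is the chain of surjections
\[ \pi_1(Y) \twoheadrightarrow \pi_1(Y)/\llangle h\rrangle = \pi_1^{\mathrm{orb}}(B) \twoheadrightarrow \pi_1(F), \]
where the first map kills the regular fiber $h$ and the second kills the cone-point generators $c_1,\dots,c_n$ (exactly as in \eqref{eq:pi1-triangle} in the case $F = S^2$). Any non-abelian representation of $\pi_1(F)$ therefore pulls back to a non-abelian representation of $\pi_1(Y)$, so it suffices to exhibit one for each closed surface $F \notin \{S^2,\RP^2,T^2\}$. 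Since $\pi_1(S^2)$, $\pi_1(\RP^2)$, and $\pi_1(T^2)$ are the abelian groups $1$, $\Z/2$, and $\Z^2$, these three are precisely the surfaces we must allow, and the remaining surfaces are the orientable $\Sigma_g$ with $g \geq 2$ together with the nonorientable $N_k$ with $k \geq 2$ (the Klein bottle being $N_2$).

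For the orientable case I would use the presentation $\pi_1(\Sigma_g) = \langle a_1,b_1,\dots,a_g,b_g \mid \textstyle\prod_i [a_i,b_i]=1\rangle$ and send $b_1,\dots,b_g$ and $a_3,\dots,a_g$ to $1$. The surface relation then becomes vacuous, yielding a surjection onto the free group $\langle a_1,a_2\rangle$, which carries non-abelian (indeed irreducible) $SU(2)$ representations, for instance $a_1\mapsto i$, $a_2\mapsto j$.

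For the nonorientable case I would use $\pi_1(N_k) = \langle a_1,\dots,a_k \mid a_1^2\cdots a_k^2 = 1\rangle$ and define $\rho(a_1)=j$, $\rho(a_2)=i$, and $\rho(a_i)=1$ for $i\geq 3$. Then $\rho(a_1)^2\cdots\rho(a_k)^2 = (-1)(-1)\cdot 1\cdots 1 = 1$, so $\rho$ is a well-defined representation whose image $\langle i,j\rangle$ is non-abelian. Pulling back along the surjections above finishes the argument in every remaining case.

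The only delicate points are bookkeeping ones. First, I must record the correct presentation of $\pi_1(Y)$ over a general, possibly nonorientable, base, where a generator $a_i$ may conjugate $h$ to $h^{-1}$, and verify that setting $h=1$ and each $c_j=1$ always recovers $\pi_1(F)$; this works precisely because the twisting relations $a_i h a_i^{-1}=h^{\pm1}$ become vacuous once $h$ is killed, leaving only the surface relation. Second, the nonorientable case is mildly counterintuitive: because $\pi_1(N_k)$ has small abelianization (for example $\Z\oplus\Z/2$ for the Klein bottle) one might expect its $SU(2)$ representations to be forced abelian, but the noncommuting order-$4$ quaternions $i$ and $j$ satisfy the single defining relation and give a non-abelian representation regardless. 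I expect this nonorientable case to be the main point to get right, although it reduces to the one-line computation above.
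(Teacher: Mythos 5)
Your proof is correct and follows essentially the same route as the paper: both pass from $\pi_1(Y)$ to the orbifold fundamental group of the base and then to the underlying surface group, and exhibit a non-abelian representation using the quaternions $i$ and $j$ whenever that surface is $\Sigma_g$ with $g \geq 2$ or $\#^k\RP^2$ with $k \geq 2$. The only cosmetic differences are that the paper sends $(a_1,b_1,a_2,b_2)\mapsto(i,j,i,j)$ in the orientable case (using $[i,j]=-1$) rather than killing the $b_i$ to make the relation vacuous, and in the non-orientable case it defines the representation directly on the Jankins--Neumann presentation of $\pi_1(Y)$ rather than first quotienting to $\pi_1(\#^k\RP^2)$.
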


\begin{proof}
We observe that $\pi_1(Y)$ surjects onto the orbifold fundamental group of $B$.  If $B$ is an orientable surface of genus $g \geq 2$ then this further surjects onto 
\[ \pi_1(\Sigma_g) = \langle a_1,b_1,\dots,a_g,b_g \mid [a_1,b_1]\dots[a_g,b_g] = 1 \rangle, \]
and we have a representation which sends $(a_1,b_1,a_2,b_2) \mapsto (i,j,i,j)$ and all other generators to $1$.  Thus if $B$ is orientable then it must be $S^2$ or $T^2$.

If instead $B$ is non-orientable, we have $B = \#^g \RP^2$ for some $g \geq 1$.  Writing
\[ Y = B((\alpha_1,\beta_1),\dots,(\alpha_n,\beta_n)) \]
for some $n \geq 0$, we have a presentation
\begin{multline} \label{eq:pi1-nonorientable}
\pi_1(Y) = \langle a_1,\dots,a_g,c_1,\dots,c_n,h \mid a_i^{-1}ha_i=h^{-1}\ \forall i, \\
[h,c_j]=c_j^{\alpha_j}h^{\beta_j}=1\ \forall j,\ c_1\dots c_n a_1^2\dots a_g^2 = 1 \rangle
\end{multline}
from \cite[Theorem~6.1]{jankins-neumann}.  If $g \geq 2$ then we can define a non-abelian representation $ \pi_1(Y) \to SU(2)$ by sending $a_1 \mapsto i$, $a_2 \mapsto j$, and all other generators to $1$, so for non-orientable $B$ we conclude that $Y$ can only be $SU(2)$-abelian if $B=\RP^2$.
\end{proof}

\subsection{Fibrations with base $T^2$}

In this subsection we consider Seifert fiber spaces of the form
\begin{equation} \label{eq:sfs-over-t2}
Y = T^2((\alpha_1,\beta_1),\dots,(\alpha_n,\beta_n)).
\end{equation}
Just as in the case where the base is $S^2$, we can replace a pair $(1,k),(\alpha,\beta)$ with $(\alpha,\beta+k\alpha)$ without changing $Y$, so we can assume that either $n \leq 1$ or $\alpha_i \geq 2$ for all $i$.  If $n=0$ then $Y=T^3$ and $\pi_1(Y)$ is abelian, so we are also free to assume that $n \geq 1$ from now on.

\begin{proposition} \label{prop:sfs-over-t2}
Suppose that $Y$ is a Seifert fiber space over $T^2$ as in \eqref{eq:sfs-over-t2}, with $n \geq 1$ and with $\alpha_i \geq 2$ for all $i$ if $n \geq 2$.  Then $Y$ is $SU(2)$-abelian if and only if it is a circle bundle over $T^2$ with even Euler number.
\end{proposition}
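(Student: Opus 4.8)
The plan is to work directly with the presentation of $\pi_1(Y)$ for a genus-one base, namely
\[
\pi_1(Y) = \langle a,b,c_1,\dots,c_n,h \mid [h,a]=[h,b]=[h,c_j]=1,\ c_j^{\alpha_j}h^{\beta_j}=1\ \forall j,\ c_1\cdots c_n[a,b]=1\rangle,
\]
in which the regular fiber $h$ is central. Under the normalization in force, $Y$ is a circle bundle precisely when it has no exceptional fibers, that is, when $n=1$ and $\alpha_1=1$; in that case the relations collapse to $[a,b]=h^{\beta_1}$ with $h$ central, and the Euler number has the same parity as $\beta_1$. I would prove the two implications separately, using throughout the fact that an element of $SU(2)$ commuting with a non-abelian subgroup must equal $\pm1$.

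For the direction asserting $SU(2)$-abelianness, suppose $Y$ is a circle bundle with even Euler number, so that $[a,b]=h^{2m}$ for some $m$. Given any $\rho\colon\pi_1(Y)\to SU(2)$ with non-abelian image, centrality of $h$ forces $\rho(h)=\pm1$, whence $[\rho(a),\rho(b)]=\rho(h)^{2m}=1$; but then $\rho(a)$ and $\rho(b)$ commute and, together with the central $\rho(h)$, generate an abelian group, a contradiction. Hence every representation is abelian.

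For the converse I would argue by contraposition, producing a non-abelian representation whenever $Y$ is not a circle bundle of even Euler number. The key tool is that the commutator map $SU(2)\times SU(2)\to SU(2)$ is surjective: since $\re([A,B])$ is continuous, lies in $[-1,1]$, and attains both $+1$ (for commuting $A,B$) and $-1$ (as $[i,j]=-1$), every conjugacy class is realized as a commutator, and simultaneous conjugation of $A,B$ then realizes any prescribed element. If $Y$ has an exceptional fiber, say $\alpha_n\geq2$, I set $\rho(h)=1$ and $\rho(c_j)=1$ for $j<n$, choose $\rho(c_n)=e^{v_n\cdot 2\pi/\alpha_n}\neq1$ (which has order dividing $\alpha_n$), and solve $[\rho(a),\rho(b)]=\rho(c_n)^{-1}$ by surjectivity of the commutator; all relations then hold, and the image is non-abelian because $[\rho(a),\rho(b)]\neq1$. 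In the remaining case $Y$ is a circle bundle with $\beta_1$ odd, and I take $\rho(h)=-1$, $\rho(a)=i$, $\rho(b)=j$, $\rho(c_1)=-1$, so that $[\rho(a),\rho(b)]=-1=\rho(c_1)^{-1}$ and the image is the non-abelian quaternion group.

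The only genuinely constraining case is the even-Euler-number circle bundle, where centrality of $h$ leaves no room to introduce non-commutativity; everywhere else the genus-one base supplies a commutator relation $[a,b]=(c_1\cdots c_n)^{-1}$ that can absorb any target element of $SU(2)$. Thus the main point to get right is the surjectivity of the commutator map together with the bookkeeping of the defining relations, and I expect no serious obstacle beyond verifying these relations and confirming that the identification ``circle bundle $\Leftrightarrow$ no exceptional fibers $\Leftrightarrow$ $n=1,\ \alpha_1=1$'' is compatible with the normalization.
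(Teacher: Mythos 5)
Your proof is correct and follows essentially the same route as the paper: both directions hinge on the same two ingredients, namely that a central element mapping into a non-abelian image must go to $\pm 1$ (which kills non-abelian representations of the even-Euler-number circle bundle), and that the commutator map on $SU(2)$ is surjective (which lets the relation $c_1\cdots c_n[a,b]=1$ absorb any prescribed product, producing non-abelian representations in every other case). The only differences are cosmetic: you set $\rho(h)=1$ and kill all but one exceptional fiber where the paper takes $\rho(h)=-1$ and uses the Seifert data $\beta_j/\alpha_j$, and you prove commutator surjectivity by a connectivity argument where the paper verifies the explicit identity $e^{i\theta}=e^{i\theta/2}\,j\,(e^{i\theta/2})^{-1}j^{-1}$.
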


\begin{proof}
We have a presentation
\[ \pi_1(Y) = \langle a,b,c_1,\dots,c_n,h \mid h\ \mathrm{central}, c_i^{\alpha_i}h^{\beta_i}=1\ \forall i, c_1\dots c_n[a,b] = 1\rangle, \]
so we attempt to define a representation $\rho: \pi_1(Y) \to SU(2)$ by setting
\[ \rho(h) = -1, \qquad \rho(c_j) = \cos\left(\tfrac{\beta_j}{\alpha_j}\pi\right) + v_j \sin\left(\tfrac{\beta_j}{\alpha_j}\pi\right) \]
for some unit vectors $v_1,\dots,v_n \in \R^3$.  This means that $\rho(h)$ will be central in the image of $\rho$ and that $\rho(c_j)^{\alpha_j} \rho(h)^{\beta_j} = 1$ for all $j$, so it remains to find elements $\rho(a), \rho(b) \in SU(2)$ such that
\[ [\rho(a),\rho(b)] = \left(\rho(c_1)\dots \rho(c_n)\right)^{-1}. \]
But this is possible because every element of $SU(2)$ is a commutator: we simply verify the identity
\[ e^{i\theta} = e^{i \theta/2} j (e^{i\theta/2})^{-1} j^{-1} \]
and note that every element of $SU(2)$ is conjugate to $e^{i\theta}$ for some $\theta$.

Assuming that $n \geq 2$ and $\alpha_k \geq 2$ for all $k$, we can now arrange for $\rho$ to have nonabelian image by taking $v_1=i$ and $v_2=j$, and choosing the rest of $v_3,\dots,v_n$ arbitrarily.  Then $\rho(c_1)$ and $\rho(c_2)$ do not commute since $\frac{\beta_1}{\alpha_1}\pi, \frac{\beta_2}{\alpha_2}\pi \not\in \pi\Z$.

If instead $n=1$, then $Y = T^2((\alpha_1,\beta_1))$ and we cannot necessarily guarantee that $\alpha_1 \neq 1$ as above.  However, if we can arrange that $\rho(c_1) \neq 1$, then the relation $c_1[a,b]=1$ implies that $[\rho(a),\rho(b)] \neq 1$, and hence that $\rho(a)$ and $\rho(b)$ do not commute.  We thus take $v_1=i$, so that
\[ \rho(c_1) = e^{i\pi \cdot \beta_j/\alpha_j}, \]
and this is not $1$ unless $\alpha_1=1$ and $\beta_1$ is even.  Thus if $n=1$ then $Y$ is not $SU(2)$-abelian except possibly when $Y=T^2((1,\beta))$ with $\beta$ even, in which case $Y$ is a circle bundle over $T^2$ with Euler number $-\beta$.  But then
\begin{align*}
\pi_1(Y) &= \langle a,b,c,h \mid h\ \mathrm{central},\ ch^{\beta}=1,\ c[a,b]=1 \rangle \\
&= \langle a,b,h \mid [a,h]=[b,h]=1,\ [a,b]=h^\beta\rangle.
\end{align*}
Since $h$ is central, a non-abelian representation $\rho: \pi_1(Y) \to SU(2)$ must send $h$ to $\pm 1$, hence $[a,b]$ to $(\pm 1)^{\beta}=1$.  But then $\rho(a)$ and $\rho(b)$ commute with each other and with $\rho(h)$, so $\rho$ has abelian image after all, contradiction.
\end{proof}

We remark that the circle bundles $Y \to T^2$ with nonzero, even Euler number have appeared before in a closely related context: Morgan, Mrowka, and Ruberman \cite{mmr} observed that there are no irreducible representations $\pi_1(Y) \to SU(2)$ and used this fact to prove a vanishing theorem for the Donaldson invariants of smooth 4-manifolds separated by $Y$.

\subsection{Fibrations with base $\RP^2$}

We now consider Seifert fiber spaces of the form
\[ Y = \RP^2((\alpha_1,\beta_1),\dots,(\alpha_n,\beta_n)). \]
If $n \leq 1$ then the base orbifold has a double cover of the form
\[ S^2 \to \RP^2 \quad\mathrm{or}\quad S^2(\alpha_1,\alpha_1) \to \RP^2(\alpha_1), \]
so pulling back the Seifert fibration along this covering gives a double cover $\tilde{Y}$ of $Y$ which is either a lens space, $S^3$, or $S^1\times S^2$.  If $\tilde{Y} \cong S^1\times S^2$ then $Y$ admits $\mathbb{S}^2\times\R$ geometry, and the only two such orientable 3-manifolds are $S^1\times S^2$ and $\RP^3\#\RP^3$ (see \cite[\S 4]{scott}, and note that $S^2 \tilde\times S^1$ and $\RP^2\times S^1$ are non-orientable).  We will see in Corollary~\ref{cor:lens-space-rp2} that in fact $S^1\times S^2$ has no Seifert fibration with base $\RP^2$, so then $Y=\RP^3\#\RP^3$.

In the case $\tilde{Y} \not\cong S^1\times S^2$, we know that $Y$ is either a lens space (but not $S^3$, since it has a nontrivial double cover) or a prism manifold.  Both of these admit Seifert fibrations with base $S^2$, and for prism manifolds the base orbifold is $S^2(2,2,n)$ for some $n \geq 2$, see \cite[Theorem~5.1]{jankins-neumann}.  Thus Theorem~\ref{thm:sfs-over-s2} tells us that the lens spaces are $SU(2)$-cyclic and the prism manifolds are not.

\begin{proposition} \label{prop:rp2-n-large}
Let $Y = \RP^2((\alpha_1,\beta_1),\dots,(\alpha_n,\beta_n))$ with $n \geq 2$ and $\alpha_i \geq 2$ for all $i$.  Then there is a representation $\pi_1(Y) \to SU(2)$ with non-abelian image.
\end{proposition}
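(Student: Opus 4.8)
The plan is to build an explicit non-abelian representation straight from the presentation \eqref{eq:pi1-nonorientable}. Taking $g=1$ there (since the base is $\RP^2 = \#^1\RP^2$), we have
\[
\pi_1(Y) = \langle a, c_1,\dots,c_n, h \mid a^{-1}ha = h^{-1},\ [h,c_j] = c_j^{\alpha_j}h^{\beta_j}=1\ \forall j,\ c_1\cdots c_n a^2 = 1\rangle.
\]
The feature that distinguishes this from the orientable bases is the orientation-reversing relation $a^{-1}ha=h^{-1}$. I would neutralize it by setting $\rho(h) = -1$: since $-1$ is central in $SU(2)$ and equal to its own inverse, this relation together with all the relations $[h,c_j]=1$ is then automatically satisfied, no matter what we choose for $\rho(a)$ and the $\rho(c_j)$.

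With $\rho(h)=-1$ fixed, the relation $c_j^{\alpha_j}h^{\beta_j}=1$ becomes $\rho(c_j)^{\alpha_j} = (-1)^{\beta_j}$. Writing $\rho(c_j) = e^{v_j\theta_j}$ with $\theta_j \in [0,\pi]$, I would choose for each $j$ an integer $\gamma_j$ with $1 \le \gamma_j \le \alpha_j - 1$ and $\gamma_j \equiv \beta_j \pmod 2$, and set $\theta_j = \tfrac{\gamma_j\pi}{\alpha_j}$. Such a $\gamma_j$ exists precisely because $\alpha_j \geq 2$: when $\alpha_j = 2$ coprimality forces $\beta_j$ odd and $\gamma_j=1$ works, while for $\alpha_j \ge 3$ the set $\{1,\dots,\alpha_j-1\}$ contains integers of both parities. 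Then $\alpha_j\theta_j = \gamma_j\pi \equiv \beta_j\pi \pmod{2\pi}$, so $\rho(c_j)^{\alpha_j} = (-1)^{\beta_j}$ as required, and crucially $\theta_j \in (0,\pi)$, so that $\rho(c_j) \neq \pm 1$. Choosing the axes $v_1 = i$ and $v_2 = j$ (and $v_3,\dots,v_n$ arbitrarily), the elements $\rho(c_1) = e^{i\theta_1}$ and $\rho(c_2) = e^{j\theta_2}$ are non-central with distinct axes, hence do not commute; this alone guarantees that the image of $\rho$ is non-abelian.

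It then remains only to satisfy the surface relation $c_1\cdots c_n a^2 = 1$, i.e.\ to find $\rho(a) \in SU(2)$ with $\rho(a)^2 = (\rho(c_1)\cdots\rho(c_n))^{-1}$. This is always possible because every element of $SU(2)$ is a square: any target written as $e^{w\phi}$ with $\phi\in[0,\pi]$ has square root $e^{w\phi/2}$. This is exactly the analogue, for the nonorientable relation $a^2$, of the fact that every element of $SU(2)$ is a commutator that was used in the $T^2$ case of Proposition~\ref{prop:sfs-over-t2}. With this choice all defining relations hold and $\rho$ has non-abelian image. The only genuine subtlety, and the single point I would check carefully, is that each cone-point generator admits a non-central image of the prescribed order — which is where the hypothesis $\alpha_i \ge 2$ for all $i$ is used; once $\rho(h)=-1$ has absorbed the orientation-reversing relation, everything else follows formally.
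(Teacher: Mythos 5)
Your proposal is correct and follows essentially the same route as the paper's proof: set $\rho(h)=-1$ to absorb the relations $a^{-1}ha=h^{-1}$ and $[h,c_j]=1$, send each $c_j$ to a non-central element $e^{v_j\theta_j}$ with $\rho(c_j)^{\alpha_j}=(-1)^{\beta_j}$ and axes $v_1=i$, $v_2=j$ to force non-abelian image, and define $\rho(a)$ as a square root of $(\rho(c_1)\cdots\rho(c_n))^{-1}$. The only cosmetic difference is that you normalize the angle to $\gamma_j\pi/\alpha_j$ with $\gamma_j\equiv\beta_j\pmod 2$, whereas the paper simply takes $\theta_j=\beta_j\pi/\alpha_j$ and observes this is not in $\pi\Z$; both choices serve the same purpose.
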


\begin{proof}
From \cite[Theorem~6.1]{jankins-neumann} we have a presentation
\[ \pi_1(Y) = \langle a,c_1,\dots,c_n,h \mid a^{-1}ha=h^{-1}, [h,c_i]=c_i^{\alpha_i}h^{\beta_i}=1\ \forall i,c_1\dots c_na^2=1 \rangle. \]
We define a representation $\rho: \pi_1(Y) \to SU(2)$ by choosing purely imaginary unit quaternions $v_1,\dots,v_n$ and setting $\rho(h)=-1$ and
\[ \rho(c_k) = \cos\left(\frac{\beta_k}{\alpha_k}\pi\right) + \sin\left(\frac{\beta_k}{\alpha_k}\pi\right) v_k, \qquad 1 \leq k \leq n. \]
The product $(c_1\dots c_n)^{-1}$ has the form $\cos(\theta)+\sin(\theta)w$ for some purely imaginary $w$ and some angle $\theta$, so we use this to define
\[ \rho(a) = \cos\left(\frac{\theta}{2}\right) + \sin\left(\frac{\theta}{2}\right)w. \]
It is easy to check that $\rho$ satisfies all of the relations in the above presentation, so it is a well-defined representation regardless of our choices of $v_1,\dots,v_n$.  Moreover, none of the $\rho(c_k)$ can be equal to $\pm1$, since $\frac{\beta_k}{\alpha_k}\pi \not\in \pi\Z$.  Thus $\rho(c_i)$ and $\rho(c_j)$ commute if and only if $v_i = \pm v_j$, so for example we can take $v_1=i$ and $v_2=\dots=v_n=j$, and then $\rho$ will have nonabelian image.
\end{proof}

To summarize, we have proved the following.

\begin{proposition} \label{prop:sfs-over-rp2}
Let $Y$ be a Seifert fiber space with base $\RP^2$ and any number of singular fibers.  Then $Y$ is $SU(2)$-abelian if and only if $Y$ is a lens space or $\RP^3\#\RP^3$.
\end{proposition}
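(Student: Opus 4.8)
The plan is to prove both directions by assembling the structural results established earlier in this subsection. For the \emph{if} direction I would argue directly: a lens space has cyclic fundamental group, so it is automatically $SU(2)$-cyclic and in particular $SU(2)$-abelian. For $\RP^3\#\RP^3$ we have $\pi_1 = \langle x,y \mid x^2=y^2=1\rangle$, and since every element $g\in SU(2)$ with $g^2=1$ must equal $\pm1$, any representation sends both $x$ and $y$ into the center $\{\pm1\}$; hence its image is abelian. So both manifolds are $SU(2)$-abelian, which is the easy half of the statement.

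For the \emph{only if} direction, I would first normalize the Seifert invariants: using the substitution $(1,k),(\alpha,\beta) \leadsto (\alpha,\beta+k\alpha)$ exactly as in the $S^2$ and $T^2$ cases, I may assume that either $n \leq 1$ or $\alpha_i \geq 2$ for all $i$. In the case $n \geq 2$ with every $\alpha_i \geq 2$, Proposition~\ref{prop:rp2-n-large} produces a non-abelian representation $\pi_1(Y) \to SU(2)$, so any such $Y$ fails to be $SU(2)$-abelian and is excluded from consideration.

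It then remains to treat $n \leq 1$, where I would invoke the covering-space analysis preceding Proposition~\ref{prop:rp2-n-large}. The base orbifold is double-covered by $S^2$ or $S^2(\alpha_1,\alpha_1)$, and pulling back the Seifert fibration along this cover yields a double cover $\tilde Y$ of $Y$ that is a lens space, $S^3$, or $S^1\times S^2$. If $\tilde Y \cong S^1\times S^2$ then $Y$ carries $\mathbb{S}^2\times\R$-geometry, and since Corollary~\ref{cor:lens-space-rp2} forbids $S^1\times S^2$ from having base $\RP^2$, we conclude $Y = \RP^3\#\RP^3$. Otherwise $Y$ is a lens space or a prism manifold; the prism manifolds have base orbifold $S^2(2,2,m)$, so Theorem~\ref{thm:sfs-over-s2} shows they are not $SU(2)$-abelian, leaving only lens spaces. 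Combining the cases gives the claimed classification.

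The genuine content of this proposition is front-loaded into Proposition~\ref{prop:rp2-n-large} (the explicit non-abelian representation for $n\ge2$) and into the geometric identification of $Y$ when $n\le1$; the step I expect to require the most care is ensuring that the normalization and case split are exhaustive, so that every $SU(2)$-abelian $Y$ with base $\RP^2$ provably falls into one of the enumerated families, and that the prism-manifold exclusion via its $S^2$-base orbifold $S^2(2,2,m)$ is applied correctly. Once those bookkeeping points are checked, the assembly itself is routine.
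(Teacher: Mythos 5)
Your proposal is correct and follows essentially the same route as the paper: normalize the Seifert invariants, dispose of $n \geq 2$ via Proposition~\ref{prop:rp2-n-large}, and for $n \leq 1$ use the orbifold double cover $S^2 \to \RP^2$ or $S^2(\alpha_1,\alpha_1)\to\RP^2(\alpha_1)$ to identify $Y$ as a lens space, $\RP^3\#\RP^3$ (via $\mathbb{S}^2\times\R$-geometry and Corollary~\ref{cor:lens-space-rp2}), or a prism manifold, the last being excluded by Theorem~\ref{thm:sfs-over-s2} since its base orbifold is $S^2(2,2,m)$. Your only addition is the explicit check that $\RP^3\#\RP^3$ is $SU(2)$-abelian (every $g \in SU(2)$ with $g^2=1$ is $\pm 1$), which the paper leaves implicit and which is a worthwhile clarification.
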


We can also determine which of these are lens spaces, which will be useful in Section~\ref{sec:four-fillings}.

\begin{corollary} \label{cor:lens-space-rp2}
The Seifert fiber space $Y=\RP^2((\alpha_1,\beta_1),\dots,(\alpha_n,\beta_n))$ has cyclic fundamental group if and only if $n=1$ and $\beta_1=\pm1$, and in this case $H_1(Y;\Z) = \Z/4\alpha_1\Z$.
\end{corollary}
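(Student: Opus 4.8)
\emph{Setup.} The plan is to work from the presentation \eqref{eq:pi1-nonorientable} with $g=1$, namely
\[ \pi_1(Y) = \langle a,c_1,\dots,c_n,h \mid a^{-1}ha=h^{-1},\ [h,c_i]=c_i^{\alpha_i}h^{\beta_i}=1\ \forall i,\ c_1\cdots c_na^2=1\rangle, \]
and to settle cyclicity case by case on $n$. First I would record two general facts. If $\pi_1(Y)$ is cyclic then it is abelian, so the relation $a^{-1}ha=h^{-1}$ forces $h=h^{-1}$; and, exactly as in the $S^2$ and $T^2$ sections, I may use the moves $(1,k),(\alpha,\beta)\leadsto(\alpha,\beta+k\alpha)$ to normalize the Seifert data so that, apart from at most a single fiber recording the Euler number, every $\alpha_i\geq 2$. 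This leaves three cases: $n\geq 2$ with all $\alpha_i\geq 2$, the pure circle bundle $n=0$, and the single-fiber case $n=1$.

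\emph{Eliminating $n\neq 1$ and the ``if'' direction.} For $n\geq 2$ with all $\alpha_i\geq 2$, Proposition~\ref{prop:rp2-n-large} produces a non-abelian representation $\pi_1(Y)\to SU(2)$, so $\pi_1(Y)$ is non-abelian and in particular not cyclic. For $n=0$ the presentation collapses to $\langle a,h\mid a^{-1}ha=h^{-1},\ a^2=1\rangle$, the infinite dihedral group, which is not cyclic. This disposes of every case except $n=1$, and it simultaneously proves the ``if'' direction: when $n=1$ and $\beta_1=\pm1$, eliminating $c=a^{-2}$ via $ca^2=1$ turns $c^{\alpha_1}h^{\beta_1}=1$ into $a^{2\alpha_1}=h^{\pm1}$, so $h=a^{\pm 2\alpha_1}$, and substituting into $a^{-1}ha=h^{-1}$ gives $a^{4\alpha_1}=1$. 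Hence $\pi_1(Y)\cong\Z/4\alpha_1$, which is cyclic and yields the claimed value of $H_1(Y;\Z)$.

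\emph{The single-fiber case.} After eliminating $c=a^{-2}$ (the relation $[h,c]=1$ becomes automatic, since $a^{-1}ha=h^{-1}$ already gives $a^{-2}ha^2=h$) the group is
\[ \pi_1(Y)=\langle a,h\mid a^{-1}ha=h^{-1},\ a^{2\alpha_1}=h^{\beta_1}\rangle. \]
Abelianizing gives the relations $2h=0$ and $2\alpha_1 a=\beta_1 h$, whose relation matrix has determinant $\pm 4\alpha_1$, so $|H_1(Y;\Z)|=4\alpha_1$. If $\beta_1=0$ then coprimality forces $\alpha_1=1$ and the group is again infinite dihedral, hence not cyclic; so I may assume $\beta_1\neq 0$. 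The decisive step is to compute $|\pi_1(Y)|$ and compare. Conjugating $a^{2\alpha_1}=h^{\beta_1}$ by $a$ yields $h^{\beta_1}=h^{-\beta_1}$, hence $h^{2\beta_1}=1$; since $\langle h\rangle$ is normal with $\pi_1(Y)/\langle h\rangle\cong\Z/2\alpha_1$, this gives $|\pi_1(Y)|=2\alpha_1\cdot\operatorname{ord}(h)\leq 4\alpha_1|\beta_1|$. I would then exhibit the metacyclic group with normal cyclic subgroup $\Z/2|\beta_1|$ and quotient $\Z/2\alpha_1$, in which $a$ acts by inversion and $a^{2\alpha_1}$ equals the central involution $h^{\beta_1}$, as a quotient of $\pi_1(Y)$, forcing $|\pi_1(Y)|=4\alpha_1|\beta_1|$ and $\operatorname{ord}(h)=2|\beta_1|$. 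Comparing orders, $\pi_1(Y)$ is abelian (equivalently $h^2=1$) if and only if $|\beta_1|=1$, which completes the ``only if'' direction.

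\emph{Main obstacle.} The substitutions and the abelianization are routine; the one place needing genuine care is verifying that the metacyclic group really has order $4\alpha_1|\beta_1|$, i.e. that $h$ has order $2|\beta_1|$ rather than collapsing further. I expect to handle this by checking the standard consistency conditions for the metacyclic presentation (the inversion automorphism has order dividing $2$ and fixes the central element $h^{\beta_1}$, since $h^{2\beta_1}=1$), which pin down the order and thereby separate the cyclic case $\beta_1=\pm1$ from the non-abelian cases $|\beta_1|\geq 2$. A secondary point to keep honest is the normalization of the Seifert data, so that the count $n$ and the conclusion $H_1(Y;\Z)=\Z/4\alpha_1$ refer to the reduced invariants.
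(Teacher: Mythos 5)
Your proposal is correct, and it reaches the statement by a genuinely different route in the decisive case $n=1$. You and the paper agree on the rest: $n\geq 2$ (normalized so that all $\alpha_i\geq 2$) is ruled out by Proposition~\ref{prop:rp2-n-large}, $n=0$ gives a non-cyclic group (your identification of it as the infinite dihedral group, i.e.\ $\pi_1(\RP^3\#\RP^3)$, is what the presentation actually yields; the paper's aside that this case is $\RP^2\times S^1$ is a harmless slip, that manifold being non-orientable), and the ``if'' direction is the same substitution $c=a^{-2}$, $h=a^{\pm2\alpha_1}$, $a^{4\alpha_1}=1$. The divergence is in the ``only if'' direction. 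The paper simply passes to the quotient $\pi_1(Y)/\llangle c\rrangle$, which is the dihedral group of order $2|\beta_1|$; since every quotient of a cyclic group is cyclic, this forces $\beta_1=\pm1$ in one line. You instead determine $\pi_1(Y)$ completely: from $h^{2\beta_1}=1$ and $\pi_1(Y)/\llangle h\rrangle\cong\Z/2\alpha_1\Z$ you bound the order above, and then pin down $\operatorname{ord}(h)=2|\beta_1|$ by exhibiting the metacyclic group of order $4\alpha_1|\beta_1|$ as a quotient. The consistency check you defer is genuine but does go through: inversion has order dividing $2\alpha_1$ and fixes $\beta_1$ modulo $2|\beta_1|$, and concretely one may take the quotient of $\Z/2|\beta_1|\Z\rtimes\Z/4\alpha_1\Z$ (generators $h$, $t$, with $t$ acting by inversion) by the central order-two element $h^{\beta_1}t^{-2\alpha_1}$, which meets $\langle h\rangle$ trivially, so $h$ retains order $2|\beta_1|$ there while $\bar{t}^{\,2\alpha_1}=\bar{h}^{\beta_1}$ and $\bar{t}^{-1}\bar{h}\bar{t}=\bar{h}^{-1}$ hold. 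Note that some quotient detecting $h$ is unavoidable in any proof: abelianness alone gives $h^2=1$ but cannot exclude the collapse $h=1$, in which case the group would be cyclic for every $\beta_1$; so a lower bound on $\operatorname{ord}(h)$ needs a witness. The paper's dihedral quotient is the cheapest such witness, and buys brevity; yours is the most expensive, and buys strictly more information --- the full structure of $\pi_1(\RP^2((\alpha_1,\beta_1)))$ as a metacyclic group of order $4\alpha_1|\beta_1|$, recovering the prism-manifold groups when $|\beta_1|\geq 2$, together with $|H_1(Y;\Z)|=4\alpha_1$ in all cases rather than only the cyclic one.
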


\begin{proof}
If $\pi_1(Y)$ is cyclic then Proposition~\ref{prop:rp2-n-large} tells us that we can take $n < 2$, and we must have $n \neq 0$ or else $Y=\RP^2\times S^1$, so $n=1$.  We will drop the subscripts and write $Y=\RP^2((\alpha,\beta))$ from now on, with $\alpha \geq 1$ and $\beta$ relatively prime to $\alpha$.

The fundamental group of $Y$ has presentation
\[ \pi_1(Y) = \langle a,c,h \mid a^{-1}ha = h^{-1},\ [h,c]=c^{\alpha} h^{\beta}=1,\ ca^2=1\rangle. \]
The quotient $\pi_1(Y)/\llangle c\rrangle$ is the dihedral group of order $2|\beta|$, and if $\pi_1(Y)$ is cyclic then so is this quotient, so we must have $\beta=\pm1$.  In this case $c=a^{-2}$ and $h=c^{\mp \alpha} = a^{\pm 2\alpha}$, so $\pi_1(Y)$ is generated by $a$; the remaining relation $a^{-1}ha=h^{-1}$ becomes $h^2=1$, or $a^{4\alpha}=1$, so $a$ has order $4\alpha$.
\end{proof}

\section{$\mathrm{Sol}$ manifolds} \label{sec:sol}

Every Seifert fibered 3-manifold admits a geometric structure, and in fact the geometric structure is determined by the orbifold Euler characteristic of the base (which can be positive, zero, or negative) and the Euler number of the Seifert fibration, as in \cite[Theorem~5.3]{scott}.  We can thus determine the geometries of the $SU(2)$-abelian Seifert fibered manifolds in Theorem~\ref{thm:su2-abelian-sfs}:

\begin{itemize}
\item $S^3$ and the lens spaces have $\mathbb{S}^3$ geometry.
\item $S^1\times S^2$ and $\RP^3\#\RP^3$ have $\mathbb{S}^2\times\R$ geometry.
\item $T^3$ and the Seifert fibered spaces
\[ S^2((3,1),(3,1),(3,-2)) \quad\mathrm{and}\quad S^2((2,1),(4,1),(4,-3)), \]
which up to orientation are the unique ones over their base orbifolds with Euler number 0, have $\mathbb{E}^3$ geometry.
\item All of the remaining $SU(2)$-abelian Seifert fibered spaces have $\mathrm{Nil}$ geometry.
\end{itemize}
The six geometries with Seifert fibered representatives consist entirely of Seifert fibered manifolds, so there are no closed, orientable, $SU(2)$-abelian 3-manifolds with either $\mathbb{H}^2\times \R$ or $\widetilde{SL(2,\R)}$ geometry.

The two geometries which do not allow Seifert fibrations are $\mathbb{H}^3$ and $\mathrm{Sol}$.  We cannot say much about $\mathbb{H}^3$, but in fact we can understand the $\mathrm{Sol}$ case completely, and this is the goal of this section.

\begin{theorem} \label{thm:count-sol-bundles}
A closed, orientable 3-manifold with $\mathrm{Sol}$ geometry is $SU(2)$-abelian if and only if it is the mapping torus of a diffeomorphism $\phi: T^2 \to T^2$, represented by a matrix $\phi \in SL(2,\Z)$ which is conjugate in $GL(2,\Z)$ to one of
\[ \begin{pmatrix} -3 & -1 \\ 1 & 0 \end{pmatrix},\ \begin{pmatrix} -3 & 1 \\ 2 & -1 \end{pmatrix},\ \mathrm{or}\ \begin{pmatrix} -3 & 4 \\ 2 & -3 \end{pmatrix}. \]
Since the mapping torus depends only on the conjugacy class, there are thus three such $SU(2)$-abelian manifolds.
\end{theorem}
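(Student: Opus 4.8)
The plan is to reduce to the case of $T^2$-bundles, convert $SU(2)$-abelianness into a condition on the monodromy matrix, and then count the resulting conjugacy classes. Every closed orientable $\mathrm{Sol}$-manifold is either the mapping torus of an Anosov diffeomorphism $\phi\colon T^2\to T^2$ (a matrix $\phi=\twomatrix{p}{q}{r}{s}\in SL(2,\Z)$ with $|\tr\phi|>2$), or a union of two twisted $I$-bundles over the Klein bottle glued along their torus boundaries. In the latter case $\pi_1(Y)$ contains an (incompressible, hence injective) Klein bottle group $\langle x,y\mid xyx^{-1}=y^{-1}\rangle$ in which the inverted generator $y$ is nontrivial, and I would rule these out by constructing an explicit non-abelian $\rho$ (sending $x\mapsto j$ and $y\mapsto e^{i\theta}$ for suitable $\theta$ and extending compatibly across the gluing torus); so I may assume $Y$ is the mapping torus of $\phi$, with
\[ \pi_1(Y)=\langle a,b,t\mid [a,b]=1,\ tat^{-1}=a^pb^r,\ tbt^{-1}=a^qb^s\rangle. \]

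Next I analyze representations $\rho\colon\pi_1(Y)\to SU(2)$. The commuting images $A=\rho(a)$ and $B=\rho(b)$ lie in a common maximal torus, which after conjugation I take to be $\{e^{i\psi}\}$, so $A=e^{2\pi i x}$ and $B=e^{2\pi i y}$. If either $A$ or $B$ is non-central, then because $\rho(t)A\rho(t)^{-1}$ and $\rho(t)B\rho(t)^{-1}$ again lie in this torus, $\rho(t)$ must normalize it, acting either as the identity---forcing abelian image---or by inversion $e^{i\psi}\mapsto e^{-i\psi}$. In the inversion case (realized by $\rho(t)=j$) the defining relations become the linear congruence $(\phi+I)\cvector{x}{y}\in\Z^2$ (up to transposing $\phi$, which affects nothing below), and the image is non-abelian precisely when $(x,y)$ is not $2$-torsion; conversely any such solution yields a genuine non-abelian $\rho$. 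Hence $Y$ is $SU(2)$-abelian if and only if every element of $\ker\big(\overline{\phi+I}\big)$ is $2$-torsion, where $\overline{\phi+I}$ is the endomorphism of $T^2=\R^2/\Z^2$ induced by $\phi+I$.

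I then translate this into arithmetic. Since $\det(\phi+I)=\det\phi+\tr\phi+1=\tr\phi+2$ and $\tr\phi\neq-2$, the kernel $\ker(\overline{\phi+I})$ is finite of order $|\tr\phi+2|$; writing $\phi+I$ in Smith normal form $\diag(d_1,d_2)$ with $d_1\mid d_2$, this kernel is $\Z/d_1\oplus\Z/d_2$, which is killed by $2$ exactly when $d_2\mid 2$. The possibilities are therefore $(d_1,d_2)\in\{(1,1),(1,2),(2,2)\}$, i.e. $|\tr\phi+2|\in\{1,2,4\}$ with an elementary abelian kernel. As $\tr\phi\geq 3$ would give $|\tr\phi+2|\geq 5$, only $\tr\phi\in\{-3,-4,-6\}$ can occur; the conditions hold automatically for $\tr\phi=-3,-4$, while $\tr\phi=-6$ additionally requires Smith form $(2,2)$, equivalently $\phi\equiv I\pmod 2$ (all entries of $\phi+I$ even), which is a conjugacy-invariant condition.

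It remains to count $GL(2,\Z)$-conjugacy classes of each surviving type, which I expect to be the main obstacle. Using the classical correspondence between conjugacy classes of hyperbolic matrices of trace $\tau$ and $GL(2,\Z)$-equivalence classes of indefinite binary quadratic forms of discriminant $\tau^2-4$, together with the reduction theory of such forms, I anticipate a single class for each of $\tau=-3$ (discriminant $5$) and $\tau=-4$ (discriminant $12$), represented by $\twomatrix{-3}{-1}{1}{0}$ and $\twomatrix{-3}{1}{2}{-1}$. For $\tau=-6$ (discriminant $32$) there are two classes, represented by the forms $x^2-8y^2$ and $2x^2-4y^2$; only the latter meets the parity condition $\phi\equiv I\pmod 2$, giving the single representative $\twomatrix{-3}{4}{2}{-3}$. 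The three resulting matrices have distinct traces and so are pairwise non-conjugate, yielding exactly three $SU(2)$-abelian $\mathrm{Sol}$-manifolds. The delicate point throughout is verifying these class-number computations precisely.
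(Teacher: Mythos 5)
Your proposal follows the same broad route as the paper: Scott's dichotomy (hyperbolic torus bundles versus unions of two twisted $I$-bundles over the Klein bottle), reduction of the torus-bundle case to the congruence $(\phi+I)\cvector{x}{y}\in\Z^2$ via the normalizer of a maximal torus, the trace condition $\tau\in\{-3,-4\}$ or $\tau=-6$ with $\phi\equiv I \pmod 2$, and a count of conjugacy classes through binary quadratic forms. Your torus-bundle analysis is correct and equivalent to Lemma~\ref{lem:torus-bundle-su2} and Proposition~\ref{prop:sol-torus-bundle}: your Smith-normal-form criterion that $\ker(\overline{\phi+I})$ be killed by $2$ agrees with the paper's condition that $2(\phi+\Id)$ be an integral multiple of $\tau+2$, since $\adj(\phi+I)\equiv -(\phi+I) \pmod{\tau+2}$.

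The genuine gap is in the twisted $I$-bundle case. A non-abelian representation of one Klein-bottle side need not extend over the gluing, and your specific ansatz $\rho(b_1)=j$, $\rho(a_1)=e^{i\theta}$ fails for infinitely many $\mathrm{Sol}$ gluings. Writing, as in Proposition~\ref{prop:sol-twisted}, $a_2=a_1^m(b_1^2)^n$ and $b_2^2=a_1^p(b_1^2)^q$, take $(m,n,p,q)=(1,1,1,2)$: the determinant is $1$, and no fiber slope of one side is matched to a fiber slope of the other, so the resulting manifold is $\mathrm{Sol}$. Extension of your $\rho$ would require $\rho(a_2)=-e^{i\theta}$ and $\rho(b_2)^2=e^{i\theta}$; but if $\theta\notin\pi\Z$ then $\rho(a_2)\neq\pm1$, so $\rho(b_2)$ must invert the circle $\{e^{i\psi}\}$, hence lies in $\{e^{i\psi}\}j$ and squares to $-1$, forcing $e^{i\theta}=-1$, a contradiction. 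So for such gluings no representation of your proposed form is non-abelian, and non-commutativity cannot be produced within a single side. The paper's fix is different in kind: it builds a surjection onto the quaternion group $Q_8$, sending $b_1\mapsto i$, $b_2\mapsto j$, and $a_1,a_2\mapsto\pm1$ with signs dictated by $(m,n,p,q)$, so that the non-commutativity comes from the two sides jointly; well-definedness uses only that $\det=\pm1$ forces $p,q$ not both even. Some such idea is needed to close your argument.

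A smaller caution on the counting step, which you yourself flagged as unverified: the classical correspondence (Chowla--Cowles--Cowles, used in Proposition~\ref{prop:count-quadratic-forms}) is between $SL(2,\Z)$-conjugacy classes and \emph{proper} equivalence classes of forms; conjugation by a determinant-$(-1)$ matrix acts on form classes by an improper substitution composed with \emph{negation}, not by ordinary $GL(2,\Z)$-equivalence of forms. For instance, discriminant $12$ has narrow class number $2$, with classes $\pm(x^2-3y^2)$, so trace $-4$ gives two $SL(2,\Z)$-classes, represented by $A_{-4}=\twosmallmatrix{-3}{1}{2}{-1}$ and its transpose; these forms are not $GL(2,\Z)$-equivalent (one represents $1$, the other does not), yet the matrices are $GL(2,\Z)$-conjugate, as the paper checks by hand. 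Likewise discriminant $32$ has three $SL(2,\Z)$-classes, not two. Your final counts of $GL(2,\Z)$-classes are correct, but only after the dictionary is set up with this negation twist.
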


\begin{proof}
If we let $N$ denote the twisted $I$-bundle over the Klein bottle $K$ with boundary $T^2$, then by \cite[Theorem~5.3]{scott}, any closed, orientable $\mathrm{Sol}$ 3-manifold is one of the following:
\begin{itemize}
\item a union $N \cup_\phi N$ of two copies of $N$, glued by some diffeomorphism $\phi: T^2 \to T^2$ of their boundaries; or
\item a $T^2$-bundle over $S^1$ with hyperbolic monodromy $\phi \in SL(2,\Z)$, meaning that $\phi$ has distinct real eigenvalues, i.e.\ $|{\tr}(\phi)|>2$.
\end{itemize}
We prove in Proposition~\ref{prop:sol-twisted} that none of the examples in the first case are $SU(2)$-abelian, and we characterize the $SU(2)$-abelian torus bundles in Proposition~\ref{prop:sol-torus-bundle} in terms of their monodromies in $SL(2,\Z)$.  Finally, any two monodromies have the same mapping torus if they are conjugate in $GL(2,\Z)$, so in Proposition~\ref{prop:count-quadratic-forms} we determine the number of such conjugacy classes.
\end{proof}

\begin{proposition} \label{prop:sol-twisted}
If $Y$ is a union of two twisted $I$-bundles over the Klein bottle, glued along a diffeomorphism of their boundaries, then $\pi_1(Y)$ surjects onto the quaternion group $Q$ of order 8.  In particular, $Y$ is not $SU(2)$-abelian.
\end{proposition}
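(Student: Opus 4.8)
The plan is to exhibit an explicit surjection $\pi_1(Y) \to Q$; composing it with the inclusion $Q = \{\pm 1, \pm i, \pm j, \pm k\} \hookrightarrow SU(2)$ as unit quaternions then produces a non-abelian representation, so $Y$ cannot be $SU(2)$-abelian. First I would recall the relevant algebra. Writing $N$ for the twisted $I$-bundle over the Klein bottle $K$, we have $\pi_1(N) \cong \pi_1(K) = \langle a, b \mid aba^{-1} = b^{-1}\rangle$, and since $N$ is the orientation $I$-bundle its boundary is the orientation double cover $\partial N = T^2$, with $\pi_1(\partial N)$ the orientation-preserving subgroup $\langle a^2, b\rangle \cong \Z^2$. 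The structural fact I will lean on is that $a^2$ is central in $\pi_1(N)$; I call the corresponding primitive class $\ell = [a^2] \in H_1(\partial N)$ the central fiber. By van Kampen, $\pi_1(Y)$ is the amalgamated product $\pi_1(N_1) *_{\pi_1(T^2)} \pi_1(N_2)$, the two boundary tori being identified by the gluing map $\phi$, so to build a homomorphism to $Q$ it suffices to produce homomorphisms $\psi_i : \pi_1(N_i) \to Q$ that agree on the amalgamating $\Z^2$ and whose images jointly generate $Q$.

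The construction I have in mind sends each $b_i$ to a central element $\eta_i \in \{\pm 1\} = Z(Q)$ and each $a_i$ to an element of order four. With this choice the Klein relation $a_i b_i a_i^{-1} = b_i^{-1}$ holds automatically (since $\eta_i$ is central of order at most $2$), the square of an order-four element is forced to be $-1$, and the entire boundary subgroup is carried into the center $\{\pm 1\}$. Thus the compatibility of $\psi_1$ and $\psi_2$ on the glued torus $T = \partial N_1 = \partial N_2$ is governed by a single character $\chi : H_1(T) \to \{\pm 1\}$, and the only requirement I need is $\chi(\ell_1) = \chi(\ell_2) = -1$, where $\ell_1, \ell_2 \in H_1(T)$ are the images of the two central fibers. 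Given such a $\chi$, it extends over each side with, say, $a_1 \mapsto i$ and $a_2 \mapsto j$ and $b_i \mapsto \chi([b_i])$; the two extensions then restrict to $\chi$ on $T$, so they agree there, and their joint image is $\langle i, j\rangle = Q$.

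The crux is therefore the existence of such a character for an arbitrary gluing $\phi$. Passing to $\F_2$ coefficients, I need $\chi \in H^1(T;\F_2)$ with $\chi(\bar\ell_1) = \chi(\bar\ell_2) = 1$, where $\bar\ell_i$ are the mod-$2$ reductions. The decisive observation is that each $\ell_i$ is primitive — the central fiber is part of a basis of $H_1(\partial N_i)$, and $\phi_*$ preserves primitivity — so each $\bar\ell_i$ is nonzero in $H_1(T;\F_2) \cong \F_2^2$. Since any two distinct nonzero vectors in $\F_2^2$ form a basis, the pair $\bar\ell_1, \bar\ell_2$ is either equal or linearly independent, and in either case an elementary linear-algebra argument produces a functional $\chi$ taking the value $1$ on both. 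This is the step I expect to be the only real subtlety: the point is that one must \emph{not} ask each $\psi_i$ to surject onto $Q$ (which genuinely can fail, as the central fiber is always forced to map to $-1$), but instead let each factor map onto a cyclic subgroup while the two jointly generate $Q$, reducing the whole problem to the primitivity of the two central-fiber slopes. I would finish by verifying the relations of the amalgamated presentation directly and noting that $\langle i, j\rangle = Q$ gives the claimed surjection, whence $Y$ is not $SU(2)$-abelian.
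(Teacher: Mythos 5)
Your proposal is correct and is essentially the paper's own proof: both decompose $\pi_1(Y)$ as the amalgam $\pi_1(N_1) \ast_{\pi_1(T^2)} \pi_1(N_2)$, send the order-four generator of one Klein bottle group to $i$ and of the other to $j$, push the entire glued torus into the center $\{\pm 1\}$ of $Q$, and reduce the gluing compatibility to a mod-$2$ condition on a boundary character. The only cosmetic difference is in how that condition is verified: the paper makes an explicit choice ($f(a_1)=(-1)^{q-1}$, $f(a_2)=f(a_1)^m(-1)^n$) using that the entries $p,q$ of the gluing matrix are not both even because $\det\phi=\pm1$, while you repackage this identical parity fact as primitivity of the two central-fiber classes in $H_1(T;\F_2)$ and solve for the character by linear algebra over $\F_2$ — the same argument in coordinate-free form (note also that the subgroup identification $\pi_1(\partial N)=\langle a^2,b\rangle$ you assert as the orientation double cover is the fact the paper establishes with its small $SU(2)$-representation trick).
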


\begin{proof}
The Klein bottle has fundamental group
\[ \pi_1(K) = \langle a,b \mid bab^{-1} = a^{-1} \rangle, \]
and the torus $T = \partial N$ is a double cover of $K$ whose fundamental group is both abelian and an index-2 subgroup of $\pi_1(K)$, hence the kernel of a surjection $\pi_1(K) \to \Z/2\Z$.  We use the representation
\[ \rho: \pi_1(K) \to SU(2), \qquad \rho(a) = e^{i\pi/4}, \quad \rho(b) = j \]
to see that $a^2$ does not commute with either $b$ or $ab$, since their images under $\rho$ do not commute.  But then the surjection $\pi_1(K) \to \Z/2\Z$ cannot send $a \mapsto 1$: if it did, the kernel $\pi_1(T)$ would contain both $a^2$ and either $b$ or $ab$ depending on whether $b\mapsto 0$ or $b\mapsto 1$.  It must therefore send $a \mapsto 0$ and (since it is surjective) $b\mapsto 1$, and hence $\pi_1(T) = \langle a,b^2 \rangle \subset \pi_1(K)$.

Now write $Y = N_1 \cup_\phi N_2$, and give the homology of $T_i = \partial N_i$ a basis of curves $[a_i],[b_i^2]$ for $i=1,2$.  Suppose that in this basis the gluing diffeomorphism $\phi:T_1 \to T_2$ has the form
\[ \phi = \begin{pmatrix} m & n \\ p & q \end{pmatrix}, \]
meaning that $a_2 = a_1^m (b_1^2)^n$ and $b_2^2 = a_1^p (b_1^2)^q$ in $\pi_1(Y)$.  Since $\phi$ is invertible, it has determinant $\pm1$, and in particular $p$ and $q$ are not both even.  We now define a homomorphism
\[ f: \pi_1(Y) = \pi_1(N_1) \ast_{\pi_1(T)} \pi_1(N_2) \twoheadrightarrow Q \]
by setting $f(b_1)=i$, $f(b_2)=j$, $f(a_1) = (-1)^{q-1}$, and $f(a_2) = f(a_1)^m(-1)^n$.  It is straightforward to check that these are well-defined on each $\pi_1(N_i)$, and that they satisfy $f(b_2^2) = f(a_1^pb_1^{2q})$ (since either $p$ or $q$ is odd) and $f(a_2) = f(a_1^m b_1^{2n})$.  Thus $f$ is well-defined, and it is surjective since $i$ and $j$ generate $Q$.
\end{proof}

Suppose instead that $Y$ is a mapping torus of some hyperbolic diffeomorphism $\phi: T^2 \to T^2$, which we identify with a matrix
\[ \phi = \begin{pmatrix} a & b \\ c & d \end{pmatrix} \in SL(2,\Z) \]
with $|{\tr}(\phi)| = |a+d| > 2$, where $\phi$ is written in terms of a basis $(x,y)$ of $\pi_1(T^2)$.   We will write $\tau=\tr(\phi)$, so that $|\tau| > 2$.  Then
\[ \pi_1(Y) = \langle x,y,t \mid [x,y]=1,\ t (x^ey^f) t^{-1} = \phi(x^ey^f) \ \forall e,f\rangle. \]
The following lemma applies in slightly more generality than for hyperbolic $\phi$, since the proof only assumes that $\tau \neq -2$.

\begin{lemma} \label{lem:torus-bundle-su2}
There is a non-abelian representation $\rho: \pi_1(Y) \to SU(2)$ if and only if one of the two representations defined by
\begin{align*}
\rho(x) &= e^{i\theta_1}, & \rho(y) &= e^{i\theta_2}, & \rho(t) &= j,
\end{align*}
where
\[ \begin{pmatrix} \theta_1 \\ \theta_2 \end{pmatrix} = \frac{2\pi}{\tau+2}\begin{pmatrix}d+1\\-b\end{pmatrix} \quad\mathrm{or}\quad \frac{2\pi}{\tau+2}\begin{pmatrix}-c\\a+1\end{pmatrix}, \]
has non-abelian image, or equivalently if and only if the corresponding values of $\theta_1$ and $\theta_2$ are not both integer multiples of $\pi$.
\end{lemma}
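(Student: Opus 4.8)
The plan is to reduce the existence of a non-abelian representation to a single linear system of congruences in the angles $\theta_1,\theta_2$, and then to show that the two representations listed in the statement already detect every solution of that system.

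First I would put an arbitrary non-abelian $\rho$ into a normal form. Since $x$ and $y$ commute, $\rho(x)$ and $\rho(y)$ lie in a common maximal torus, which after conjugation I take to be the standard circle $T=\{e^{i\theta}\}$. If both $\rho(x)$ and $\rho(y)$ were central then the image would be generated by $\pm1$ and $\rho(t)$, hence abelian, so at least one of them is a non-central element $g\in T$. The monodromy relations say that $\rho(t)$ conjugates $\langle\rho(x),\rho(y)\rangle$ into $T$; in particular $\rho(t)g\rho(t)^{-1}\in T$, and being conjugate to the non-central $g$ it is itself non-central, while also lying in $\rho(t)T\rho(t)^{-1}$. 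Since two distinct maximal tori of $SU(2)$ meet only in $\{\pm1\}$, this forces $\rho(t)T\rho(t)^{-1}=T$, i.e.\ $\rho(t)$ normalizes $T$. As the normalizer of $T$ in $SU(2)$ equals $T\sqcup Tj$, and $\rho(t)\in T$ would make the image abelian, I conclude $\rho(t)\in Tj$; conjugating by a suitable $e^{i\alpha}$, which fixes $T$ pointwise, I may then assume $\rho(t)=j$. Thus every non-abelian $\rho$ is conjugate to one of the asserted form.

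With $\rho(t)=j$ I use $je^{i\theta}j^{-1}=e^{-i\theta}$ to rewrite the relations $txt^{-1}=x^ay^c$ and $tyt^{-1}=x^by^d$ as the congruences
\[ (\phi^{T}+I)\cvector{\theta_1}{\theta_2}\equiv 0 \pmod{2\pi\Z^2}. \]
Here $\det(\phi^{T}+I)=\det\phi+\tr\phi+1=\tau+2$, which is nonzero exactly because $\tau\neq-2$ — the only place this hypothesis is used — so the system is nondegenerate and its full solution set is $\cvector{\theta_1}{\theta_2}=\tfrac{2\pi}{\tau+2}\adj(\phi^{T}+I)\,\vec m$ as $\vec m$ ranges over $\Z^2$. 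Taking $\vec m=(1,0)$ and $\vec m=(0,1)$ recovers precisely the two candidate angle-vectors $\tfrac{2\pi}{\tau+2}(d+1,-b)$ and $\tfrac{2\pi}{\tau+2}(-c,a+1)$ of the statement; call these solutions $\vec\theta^{(1)}$ and $\vec\theta^{(2)}$.

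Finally, the image of such a normal-form representation is abelian exactly when $j$ commutes with both $\rho(x)$ and $\rho(y)$, i.e.\ when $\theta_1,\theta_2\in\pi\Z$, so non-abelianity is equivalent to $(\theta_1,\theta_2)$ not lying in $\pi\Z^2$ modulo $2\pi$. Because a general solution is the integer combination $m_1\vec\theta^{(1)}+m_2\vec\theta^{(2)}$ of the two distinguished solutions, and because the abelian locus $\pi\Z^2/2\pi\Z^2$ is a subgroup of $(\R/2\pi\Z)^2$, if both $\vec\theta^{(1)}$ and $\vec\theta^{(2)}$ were abelian then every solution would be; conversely each distinguished solution is itself a genuine representation. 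This gives the dichotomy: a non-abelian representation exists if and only if one of the two listed ones is non-abelian, if and only if its angles are not both integer multiples of $\pi$. The step I expect to be the main obstacle is the normal-form reduction, specifically making rigorous that $\rho(t)$ must \emph{normalize} $T$ rather than merely conjugate the cyclic subgroup $\langle\rho(x),\rho(y)\rangle$ into it, and that the residual conjugation freedom suffices to set $\rho(t)=j$; once the problem is linearized, the passage from the infinitely many solutions to just two rests entirely on the abelian locus being a subgroup.
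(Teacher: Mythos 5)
Your proposal is correct and follows essentially the same route as the paper: conjugate so that $\rho(x),\rho(y)$ lie in the standard circle, force $\rho(t)=j$, linearize the monodromy relations into the congruence system $(\phi^T+I)\left(\begin{smallmatrix}\theta_1\\ \theta_2\end{smallmatrix}\right)\equiv 0 \pmod{2\pi\Z^2}$ whose coefficient matrix has determinant $\tau+2\neq 0$, and conclude because every solution is an integer combination of the two listed ones while the abelian locus $\pi\Z^2$ is closed under such combinations. The only cosmetic difference is how $\rho(t)\in Tj$ is established: the paper uses invariance of $\re(\cdot)$ under conjugation (so $\rho(t)e^{i\theta_1}\rho(t)^{-1}=e^{\pm i\theta_1}$) and rules out the commuting case directly, whereas you invoke uniqueness of the maximal torus through a non-central element together with $N(T)=T\sqcup Tj$ --- the same underlying fact in different packaging.
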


\begin{proof}
It is straightforward to check that these two choices of $(\theta_1,\theta_2)$ both define representations, since $\rho(x)$ and $\rho(y)$ clearly commute and since
\[ \rho(t) \rho(x^ey^f) \rho(t)^{-1} = j e^{i(e\theta_1+f\theta_2)} j^{-1} = e^{-i(e\theta_1+f\theta_2)} \]
is equal to
\[ \rho(x^{ae+bf}y^{ce+df}) = e^{i((a\theta_1+c\theta_2)e+(b\theta_1+d\theta_2)f)} \]
for all integers $e$ and $f$ if and only if $(a+1)\theta_1 + c\theta_2$ and $b\theta_1 + (d+1)\theta_2$ are both multiples of $2\pi$.  This last condition is easily verified for both values of $(\theta_1,\theta_2)$, since these quantities are equal to $2\pi\left(\frac{(a+1)(d+1)-bc}{\tau+2}\right) = 2\pi$ and $0$ in some order.

In general, since $x$ and $y$ commute, given any non-abelian representation $\rho: \pi_1(Y) \to SU(2)$ we can find angles $\theta_1,\theta_2$ such that 
\[ \rho(x) = e^{i\theta_1}, \qquad \rho(y) = e^{i\theta_2} \]
up to conjugacy, and then the relation $t (x^ey^f) t^{-1} = \phi(x^ey^f)$ becomes
\begin{equation} \label{eq:torus-bundle-t}
\rho(t) e^{i(e\theta_1+f\theta_2)} \rho(t)^{-1} = e^{i((ae+bf)\theta_1 + (ce+df)\theta_2)} \quad \mathrm{for\ all}\ e,f.
\end{equation}

We cannot have $\theta_1, \theta_2 \in \pi\Z$ because then $\rho(x)$ and $\rho(y)$ would both be $\pm 1$ and hence $\rho$ would have abelian image, regardless of the value of $\rho(t)$.  Supposing without loss of generality that $e^{i\theta_1} \neq \pm 1$, setting $(e,f)=(1,0)$ in \eqref{eq:torus-bundle-t} gives
\[ \rho(t) e^{i\theta_1} \rho(t)^{-1} = e^{i(a\theta_1+c\theta_2)}. \]
Since $\re(e^{i\theta_1})$ is invariant under conjugacy, the only elements of the form $e^{i\psi}$ conjugate to $e^{i\theta_1}$ are $e^{\pm i\theta_1}$, so the right side must be one of these.  If it is $e^{i\theta_1}$ then $\rho(t)$ commutes with $e^{i\theta_1}$, hence it must have the form $\rho(t)=e^{i\alpha}$ for some $\alpha$, contradicting the assumption that $\rho$ has non-abelian image.  Thus the right side must be $e^{-i\theta_1}$, and conjugating both sides by $-j$ gives
\[ (-j\rho(t))e^{i\theta_1}(-j\rho(t))^{-1} = (-j)e^{-i\theta_1}(-j)^{-1} = e^{i\theta_1}, \]
so that $-j\rho(t) = e^{i\alpha}$ for some $\alpha$ and thus $\rho(t) = je^{i\alpha}$.  In fact, any value of $\alpha$ leads to the same result, so we are free to take $\rho(t) = j$.

It now follows that $\rho(t)e^{i\beta}\rho(t)^{-1} = e^{-i\beta}$ for all $\beta$.  Applying this to \eqref{eq:torus-bundle-t}, we conclude just as before that $(a+1)\theta_1 + c\theta_2$ and $b\theta_1 + (d+1)\theta_2$ are both integer multiples of $2\pi$, or equivalently that
\[ \begin{pmatrix} a+1 & c \\ b & d+1 \end{pmatrix}\begin{pmatrix} \theta_1 \\ \theta_2 \end{pmatrix} = \begin{pmatrix} 2\pi m \\ 2\pi n \end{pmatrix} \]
for some integers $m$ and $n$.  The matrix on the left has determinant $\tau+2 \neq 0$, hence is invertible, so we set
\begin{align*}
\begin{pmatrix} \theta_1 \\ \theta_2 \end{pmatrix} &= \frac{1}{\tau+2}\begin{pmatrix} d+1 & -c \\ -b & a+1 \end{pmatrix} \begin{pmatrix} 2\pi m \\ 2\pi n \end{pmatrix} \\
& = \frac{2\pi}{\tau+2}\left[\begin{pmatrix} d+1 \\ -b \end{pmatrix}m + \begin{pmatrix} -c \\ a+1 \end{pmatrix}n \right].
\end{align*}
If $\frac{2}{\tau+2}\left(\begin{smallmatrix} d+1 \\ -b \end{smallmatrix}\right)$ and $\frac{2}{\tau+2}\left(\begin{smallmatrix} -c \\ a+1 \end{smallmatrix}\right)$ both have integral coordinates then we have $\theta_1,\theta_2 \in \pi\Z$ for all choices of $m$ and $n$, but then $\rho$ cannot be non-abelian for any $m$ and $n$.  Thus if any non-abelian $\rho$ exists, either $(m,n)=(1,0)$ or $(m,n)=(0,1)$ must yield such a $\rho$, and these give the values of $(\theta_1,\theta_2)$ in the statement of the lemma.
\end{proof}

\begin{proposition} \label{prop:sol-torus-bundle}
Let $Y$ be an orientable $\mathrm{Sol}$ manifold which is a $T^2$-bundle over $S^1$, whose monodromy is identified with an element $\phi \in SL(2,\Z)$.  Then $Y$ is $SU(2)$-abelian if and only if either $\phi$ has trace $-3$ or $-4$, or $\tr(\phi) = -6$ and $\phi \equiv \Id \pmod{2}$.
\end{proposition}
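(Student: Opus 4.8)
The plan is to extract the $SU(2)$-abelian condition directly from Lemma~\ref{lem:torus-bundle-su2} and then reduce it to a clean divisibility statement about the matrix $\phi = \twosmallmatrix{a}{b}{c}{d}$. By that lemma a non-abelian representation of $\pi_1(Y)$ exists if and only if at least one of the two candidate vectors $(\theta_1,\theta_2)$ has a coordinate outside $\pi\Z$. Negating this, $Y$ is $SU(2)$-abelian if and only if \emph{both} candidate vectors lie in $(\pi\Z)^2$, i.e. writing $s = \tau+2$ (recall $|\tau|>2$, so $s \neq 0$), if and only if all four of
\[ \frac{2(a+1)}{s},\qquad \frac{2b}{s},\qquad \frac{2c}{s},\qquad \frac{2(d+1)}{s} \]
are integers, that is, $s$ divides each of $2(a+1),\,2b,\,2c,\,2(d+1)$.

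First I would record the algebraic identity $\det(\phi+\Id) = (a+1)(d+1) - bc = (ad-bc) + (a+d) + 1 = \tau+2 = s$. Setting $g = \gcd(a+1,b,c,d+1)$, the four divisibility conditions are jointly equivalent to the single condition $|s| \mid 2g$, since $s$ divides $2m$ for every entry $m$ of $\phi+\Id$ precisely when $s$ divides the collective gcd $2g$. On the other hand, because $g$ divides every entry of $\phi+\Id$, its square divides the degree-two determinant, giving $g^2 \mid s$.

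The key deduction is then that $g^2 \mid |s|$ together with $|s| \mid 2g$ forces $g^2 \mid 2g$, hence $g \mid 2$ and $g \in \{1,2\}$. A short case analysis pins down $s$. When $g=1$ we get $|s| \mid 2$, and since $s=1,2$ would require $\tau \in \{-1,0\}$ (violating $|\tau|>2$) we are left with $s \in \{-1,-2\}$, i.e. $\tau \in \{-3,-4\}$. When $g=2$ we need both $4=g^2 \mid |s|$ and $|s| \mid 2g = 4$, so $|s|=4$; as $s=4$ gives the excluded $\tau=2$, we get $s=-4$, i.e. $\tau=-6$, while $g=2$ says exactly that every entry of $\phi+\Id$ is even, i.e. $\phi \equiv \Id \pmod 2$. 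In particular no matrix with $\tau>2$ can satisfy the condition, so only these three families survive.

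Finally I would confirm the converse, that each family is genuinely $SU(2)$-abelian, by checking $s$ divides $2(a+1),2b,2c,2(d+1)$ directly: for $\tau=-3$ one has $s=-1$, which divides everything; for $\tau=-4$ one has $s=-2$, which divides $2$ times any integer; and for $\tau=-6$ with $\phi \equiv \Id \pmod 2$ all entries of $\phi+\Id$ are even, so $s=-4$ divides twice each of them. I expect the only real obstacle to be the bookkeeping in the two-directional divisibility argument — keeping track of signs by working with $|s|$, and making the gcd-to-determinant step $g^2 \mid s$ airtight — with everything else being routine verification.
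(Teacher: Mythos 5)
Your proof is correct and follows essentially the same route as the paper's: both reduce the statement via Lemma~\ref{lem:torus-bundle-su2} to the condition that $\tau+2$ divides every entry of $2(\phi+\Id)$, and both then exploit the identity $\det(\phi+\Id)=\tau+2$ to force $\tau+2\in\{-1,-2,-4\}$ before the same three-case verification. Your gcd bookkeeping ($g^2 \mid s$ and $|s| \mid 2g$, hence $g \mid 2$) is a mild repackaging of the paper's determinant step, which instead applies $\det$ directly to $2(\phi+\Id)=(\tau+2)M$ to conclude $(\tau+2)\mid 4$.
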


\begin{proof}
Again we write $\tau = \tr(\phi)$ with $|\tau| > 2$ and adopt the same notation as before.  According to Lemma~\ref{lem:torus-bundle-su2}, there is a non-abelian representation $\pi_1(Y) \to SU(2)$ if and only if either $2\left(\begin{smallmatrix} d+1 \\ -b \end{smallmatrix}\right)$ or $2\left(\begin{smallmatrix} -c \\ a+1 \end{smallmatrix}\right)$ is not an integral multiple of $\tau+2$.  This is equivalent to
\[ 2\begin{pmatrix} a+1 & b \\ c & d+1 \end{pmatrix} = 2(\phi+\Id) \]
not being $\tau+2$ times an integral matrix.

If $2(\phi+\Id)$ is a multiple of $\tau+2$, then its determinant $4(\tau+2)$ is a multiple of $(\tau+2)^2$, so that $\tau+2$ divides $4$.  If $Y$ is $SU(2)$-abelian, then since $|\tau| > 2$ we must therefore have $\tau+2 \in \{-1,-2,-4\}$; when $\tau+2$ is $-1$ or $-2$ it is certainly the case that this matrix is a multiple of $\tau+2$, so that $Y$ is indeed $SU(2)$-abelian if $\tau$ is $-3$ or $-4$.  This leaves only the case $\tau+2 = -4$, for which $2(\phi+\Id)$ is a multiple of $-4$ if and only if $\phi \equiv \Id \pmod{2}$.
\end{proof}

A torus bundle over $S^1$ is not determined uniquely up to homeomorphism by the monodromy $\phi \in SL(2,\Z)$: any two choices which are conjugate by an element of $GL(2,\Z)$ produce the same 3-manifold.  Thus Proposition~\ref{prop:sol-torus-bundle} provides relatively few $SU(2)$-abelian manifolds.

To elaborate on this point, the number $c(\tau)$ of conjugacy classes in $SL(2,\Z)$ with fixed trace $\tau$ is well-understood.  For example, Chowla, Cowles, and Cowles \cite{ccc} prove for all $\tau \neq \pm2$ that $c(\tau)$ (which they denote $\bar{H}(\tau)$) is equal to the number $\bar{h}(\tau^2-4)$ of equivalence classes of binary quadratic forms
\[ Q(x,y) = ax^2+bxy+cy^2 \]
with discriminant $b^2-4ac = \tau^2-4$.  Here two quadratic forms $Q$ and $Q'$ are said to be equivalent if there are integers $p,q,r,s$ with $ps-qr=1$ such that
\[ Q(px+qy, rx+sy) = Q'(x,y). \]
The bijection between conjugacy classes in $SL(2,\Z)$ and equivalence classes of forms is given by
\[ A = \begin{pmatrix} a & b \\c & d \end{pmatrix} \quad\longleftrightarrow\quad Q_A(x,y) = bx^2 + (d-a)xy - cy^2. \]
Note that $Q_A$ has discriminant $(d-a)^2+4bc = (a+d)^2 - 4(ad-bc) = \tau^2 - 4$.

\begin{proposition} \label{prop:count-quadratic-forms}
Let $A \in SL(2,\Z)$, and define $A_{-3}, A_{-4}, A_{-6} \in SL(2,\Z)$ by
\[ A_{-3} = \begin{pmatrix} -3 & -1 \\ 1 & 0 \end{pmatrix}, \quad A_{-4} = \begin{pmatrix} -3 & 1 \\ 2 & -1 \end{pmatrix}, \quad A_{-6} = \begin{pmatrix} -3 & 4 \\ 2 & -3 \end{pmatrix}. \]
If $\tr(A)=-3$ then $A$ is conjugate in $SL(2,\Z)$ to $A_{-3}$.  If $\tr(A)=-4$ then $A$ is conjugate in $SL(2,\Z)$ to exactly one of $A_{-4}$ and its transpose, and these are conjugate to each other in $GL(2,\Z)$.  If $\tr(A) = -6$ and $A \equiv \Id \pmod{2}$ then $A$ is conjugate in $SL(2,\Z)$ to $A_{-6}$.
\end{proposition}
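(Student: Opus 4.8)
The plan is to use the correspondence recalled above between $SL(2,\Z)$-conjugacy classes of a fixed trace $\tau$ and proper equivalence classes of binary quadratic forms of discriminant $\tau^{2}-4$, and to carry out the resulting count in each case. The three discriminants are $(-3)^{2}-4 = 5$, $(-4)^{2}-4 = 12$, and $(-6)^{2}-4 = 32$, none of which is a perfect square. For $\tau = -3$ one computes $\bar{h}(5) = 1$ by reducing indefinite forms of discriminant $5$, so there is a single conjugacy class; since $A_{-3}$ has trace $-3$ and determinant $1$, it represents that class, and every trace-$(-3)$ matrix in $SL(2,\Z)$ is conjugate to it.

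For $\tau = -6$ the decisive observation is that, for $A \in SL(2,\Z)$, the hypothesis $A \equiv \Id \pmod 2$ is equivalent to all three coefficients of $Q_{A} = bx^{2}+(d-a)xy-cy^{2}$ being even: having $b,c$ even and $a\equiv d \pmod 2$, together with $ad-bc=1$, forces $a,d$ odd and $b,c$ even. Both conditions are $SL(2,\Z)$-invariant (conjugation preserves a matrix modulo $2$, and an integral substitution preserves divisibility of all coefficients by $2$), so the correspondence restricts to a bijection between the classes in question and proper equivalence classes of discriminant-$8$ forms, via $2Q' \leftrightarrow Q'$. As $\bar{h}(8)=1$, there is a unique such class, represented by $A_{-6}$, which indeed satisfies $A_{-6}\equiv \Id \pmod 2$ and has trace $-6$.

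The case $\tau = -4$ carries the real content, since it is the only one with more than one class: here $\bar{h}(12)=2$, so there are two conjugacy classes. I would take $A_{-4}$ and its transpose $A_{-4}^{T}$ as representatives and establish two things. First, they are $GL(2,\Z)$-conjugate, which one sees by exhibiting an explicit $P\in GL(2,\Z)$ with $PA_{-4}P^{-1}=A_{-4}^{T}$, for instance $P = \begin{pmatrix} 0 & 1 \\ 1 & 1 \end{pmatrix}$. Second, they are \emph{not} $SL(2,\Z)$-conjugate, so that together they account for both classes and every trace-$(-4)$ matrix is $SL(2,\Z)$-conjugate to exactly one of them. The clean way to approach the second point is to note that passing to the transpose corresponds to negation at the level of forms: a direct substitution gives $Q_{A}(-y,x) = -Q_{A^{T}}(x,y)$, so $A$ is $SL(2,\Z)$-conjugate to $A^{T}$ precisely when $Q_{A^{T}}$ is properly equivalent to $-Q_{A^{T}}$.

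The main obstacle is exactly this proper-inequivalence for $D=12$, since that is where the $GL$-versus-$SL$ distinction becomes visible. The forms attached to $A_{-4}$ and $A_{-4}^{T}$ are $x^{2}+2xy-2y^{2}$ and $2x^{2}+2xy-y^{2}$, and I would verify by explicit reduction, computing the cycle of reduced forms of discriminant $12$, that these lie in distinct proper classes while being improperly equivalent. Conceptually this reflects the fact that the fundamental unit of $\Q(\sqrt{3})$ has norm $+1$, so negation acts nontrivially and swaps the two classes; by contrast the fundamental units for discriminants $5$ and $8$ have norm $-1$, which is precisely why those cases yield a single class with no transpose ambiguity.
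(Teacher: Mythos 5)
Your proposal is correct in substance and necessarily shares the paper's overall frame, since both rest on the Chowla--Cowles--Cowles correspondence and the class numbers $\bar{h}(5)=\bar{h}(8)=1$, $\bar{h}(12)=2$; your treatment of traces $-3$ and $-6$ matches the paper's, including the bijection $A \leftrightarrow \tfrac{1}{2}Q_A$ for passing from discriminant $32$ to discriminant $8$ (your mod-$2$ equivalence is argued on the matrix side rather than the form side, but the content is the same). Where you genuinely diverge is the trace $-4$ case, which is indeed the one with real content. The paper stays entirely with matrices: it solves the linear system $MA_{-4}=(A_{-4})^TM$ explicitly, finds that all integral solutions are $M=\left(\begin{smallmatrix} 2c+2d & -c \\ -c & d \end{smallmatrix}\right)$, observes that $\det M = 3d^2-(c-d)^2$ is never $1 \pmod 4$ (so no conjugator lies in $SL(2,\Z)$), and takes $(c,d)=(1,0)$ for the $GL(2,\Z)$-conjugacy. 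You instead work on the form side via the identity $Q_A(-y,x)=-Q_{A^T}(x,y)$, reducing $SL(2,\Z)$-non-conjugacy of $A_{-4}$ and its transpose to the statement that a discriminant-$12$ form is not properly equivalent to its negative; your conceptual explanation (the fundamental unit of $\Q(\sqrt{3})$ has norm $+1$) is the right one, and the reduction-cycle computation you defer does succeed --- one can certify it cheaply by noting $2x^2+2xy-y^2 \equiv 2(x^2+xy+y^2) \pmod 3$ never represents $1$, whereas $x^2+2xy-2y^2$ does. Your explicit $GL(2,\Z)$-conjugator $P=\left(\begin{smallmatrix} 0 & 1 \\ 1 & 1 \end{smallmatrix}\right)$ also checks out. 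The paper's route is more elementary and self-contained; yours explains structurally why two classes exist at all and identifies transposition with negation of forms, which the paper's computation leaves invisible.

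One side remark in your proposal is wrong, though harmlessly so: the forms $x^2+2xy-2y^2$ and $2x^2+2xy-y^2$ are \emph{not} improperly equivalent. No $GL(2,\Z)$-substitution relates them, since the latter never represents $1$ (by the mod-$3$ obstruction above) while the former does. The correct statement --- which your own identity $Q_A(-y,x)=-Q_{A^T}(x,y)$ already gives --- is that each is \emph{properly} equivalent to the \emph{negative} of the other: under the correspondence, $GL(2,\Z)$-conjugacy of matrices translates into proper equivalence of forms up to sign, not into improper equivalence. Since your $GL(2,\Z)$-conjugacy claim is established directly by the matrix $P$, and your $SL(2,\Z)$-non-conjugacy needs only that the two forms lie in distinct proper classes, nothing in your argument depends on this erroneous remark; you should simply delete it.
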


\begin{proof}
Write $\tau = \tr(A)$.  By the above argument, in the cases $\tau=-3$ and $\tau=-4$, we wish to count quadratic forms of discriminant $5$ and $12$ respectively.  For $\tau=-6$, the discriminant is $32$, and the condition $A \equiv \Id \pmod{2}$ is equivalent to the coefficients of $Q_A$ all being even.  (One direction is immediate; for the converse, if $Q(x,y)=px^2+qxy+ry^2$ with $q^2-4pr=32$ and $p,q,r$ even then $q$ must be a multiple of 4, and the bijection identifies $Q$ with the matrix $\left(\begin{smallmatrix} -3-q/2 & p \\ -r & -3+q/2 \end{smallmatrix}\right)$, which reduces mod 2 to the identity matrix.)  The number of such conjugacy classes in $SL(2,\Z)$ is therefore the number of equivalence classes of quadratic forms of discriminant $8$, by the bijection $A \leftrightarrow \frac{1}{2}Q_A$.  In other words, the numbers of conjugacy classes we wish to count for trace $-3$, $-4$, and $-6$ are given by $\bar{h}(5)$, $\bar{h}(12)$, and $\bar{h}(8)$ respectively.

These numbers of quadratic forms up to equivalence have been heavily studied by number theorists, see e.g.\ \cite{buell}; the number of equivalence classes of \emph{primitive} quadratic forms of discriminant $D > 0$ is the order of the narrow class group of $\Q(\sqrt{D})$ \cite[Theorem~6.20]{buell}.  In the cases $D=5,8,12$, all binary quadratic forms are in fact primitive.  Indeed, if $Q$ is a multiple of $n \geq 2$ then $\frac{1}{n}Q$ has discriminant $\frac{D}{n^2}$; we note that $D=5$ is square-free, and that $D=8,12$ are both multiples of $2^2$ but with $\frac{D}{2^2}$ either $2$ or $3\pmod{4}$, hence not a discriminant.  These numbers are catalogued for small $D$ in \cite[Appendix~2]{buell}, where we see that $\bar{h}(5)=\bar{h}(8)=1$ and $\bar{h}(12)=2$.

In conclusion, every element of $A\in SL(2,\Z)$ with $\tr(A)=-3$, or with $\tr(A)=-6$ and $A \equiv \Id \pmod{2}$, is conjugate to $A_{-3}$ or to $A_{-6}$ respectively.  There are two conjugacy classes of matrices with trace $-4$, which we claim are represented by $A_{-4}$ and by $(A_{-4})^T$.  To see that these are not conjugate in $SL(2,\Z)$, note that otherwise they satisfy $MA_{-4} = (A_{-4})^TM$ where $M$ is an integer matrix, and this gives a rank-2 system of linear equations in the entries of $M$ whose solutions are precisely $M=\left(\begin{smallmatrix} 2c+2d & -c \\ -c & d \end{smallmatrix}\right)$ for integers $c$ and $d$.  Then $\det(M) = 3d^2 - (c-d)^2$, and this is never $1\pmod{4}$, so we cannot have $M \in SL(2,\Z)$.  However, taking $(c,d)=(1,0)$, and hence $M=\left(\begin{smallmatrix} 2 & -1 \\ -1 & 0 \end{smallmatrix}\right)$, shows that $A_{-4}$ is conjugate to $(A_{-4})^T$ in $GL(2,\Z)$.
\end{proof}

\section{Hyperbolic manifolds} \label{sec:hyperbolic}

\subsection{Some hyperbolic examples} \label{ssec:double-covers}

Hyperbolic manifolds are the most mysterious geometric manifolds from the perspective of $SU(2)$ character varieties.   A theorem of Thurston \cite[Proposition~3.1.1]{culler-shalen-splittings} says that they are never $SL(2,\C)$-cyclic, because the canonical representations of their fundamental groups into $PSL(2,\C)$ lift to faithful $SL(2,\C)$ representations, but nonetheless it is still possible for them to be $SU(2)$-cyclic.  The only examples we currently know are produced by Theorem~\ref{thm: bridge number 3}; here we describe their construction, as branched double covers of certain knots in $S^3$.

\begin{definition}[\cite{zentner-simple}] \label{def:su2-simple}
A representation $G \to SU(2)$ is \emph{binary dihedral} if it is conjugate to a representation whose image lies in the binary dihedral group
\[ \{ e^{i\theta} \} \sqcup \{e^{i\theta}j\} \subset SU(2). \]
A knot $K \subset S^3$ is \emph{$SU(2)$-simple} if every representation
\[ \pi_1(S^3 \setminus N(K)) \to SU(2) \]
which sends a meridian $\mu$ of $K$ to a purely imaginary unit quaternion (equivalently, a matrix of trace zero) is binary dihedral.
\end{definition}

If a branched double cover $\dcover(K)$ is $SU(2)$-cyclic, then the knot $K$ is $SU(2)$-simple; see \cite[\S 3]{zentner-simple} for details.  We will use results of Cornwell \cite{cornwell} and the second author \cite{zentner-simple} to prove a partial converse, namely that the branched double cover of an $SU(2)$-simple 3-bridge knot is $SU(2)$-cyclic.  This was implicitly stated in \cite[\S 10]{zentner-simple} as a consequence of \cite[Theorem~1.1]{cornwell}, but the proof is not at all obvious so we provide a complete argument here.

We first recall that we have a short exact sequence of groups
\begin{equation}\label{eq:ses branched cover} 
1 \to \pi_1(\dcover(K)) \to \pi_1(S^3 \setminus N(K))/ \llangle \mu^2 \rrangle \to \Z/2\Z \to 1
\end{equation}
which is split by sending the generator of $\Z/2\Z$ to a meridian $\mu$.  We also recall that the adjoint action of $SU(2)$ on its Lie algebra gives a double cover $SU(2) \to SO(3)$; then the binary dihedral group is the preimage of the dihedral group, which is the subgroup of $SO(3)$ isomorphic to $O(2) \cong SO(2) \rtimes \Z/2\Z$ consisting of rotations around the $i$-axis and reflections along lines through the origin in the $jk$-plane.
 
Let $\rho \colon \pi_1(S^3 \setminus N(K)) \to SU(2)$ be a representation which sends a meridian $\mu$ of the knot $K$ to an imaginary quaternion. Then $\rho(\mu)^2 = -1$, so $\rho$ gives rise to an $SO(3)$ representation which sends the meridian $\mu$ to an element of order $2$. Therefore $\rho$ descends to a representation
\[ \overline{\rho}: \pi_1(S^3 \setminus N(K))/\llangle \mu^2 \rrangle \to SO(3), \]
which by restriction also defines a representation 
\[ \overline{\overline{\rho}}\colon \pi_1(\dcover(K)) \to SO(3). \]
As explained for instance in \cite[\S 3]{zentner-simple}, this restriction $\overline{\overline{\rho}}$ is cyclic if the representation $\rho$ is binary-dihedral.

It turns out that the converse also holds: If a knot $K$ is $SU(2)$-simple, then all representations $\overline{\overline{\rho}}\colon \pi_1(\dcover(K)) \to SU(2)$ which arise in this way from a representation $\rho$ of the knot complement have cyclic image. This follows essentially from the proof of Proposition~\ref{prop: SU(2)-cyclic} below. It is not just a group theoretic argument as one might expect (two semi-direct products with $\Z/2\Z$ occur), but it uses the fact that $H_1(\dcover(K))$ has odd order. 
	
Moreover, under certain circumstances, the representations $\overline{\overline{\rho}}$ arising this way are the only representations of $\pi_1(\dcover(K))$ that can occur.  For instance, we will see below that this can follow from results of Cornwell \cite{cornwell}. To explain this, we use the following simple lemma. 

\begin{lemma}\label{le:equivariant}
Let $H$ be a group, and let $\tau \in \Aut(H)$ be an element of order dividing $k$, which then gives rise to a homomorphism $\Z/k\Z \to \Aut(H)$ and a semi-direct product $H \rtimes_{\tau} \Z/k\Z$ with group structure
\[ (a,h) \cdot (b,l) := (a \tau^h(b), h+l). \]
Suppose $\varphi\colon H \to SO(3)$ is a representation which is $\Z/k\Z$-equivariant in the following sense: There exists an element $A \in SO(3)$ of order dividing the order of $\tau$ such that we have
\[ \varphi(\tau(a)) = A \varphi(a) A^{-1} \]
for all $a \in H$.  Then there is a representation
\[ \psi\colon H \rtimes_{\tau} \Z/k\Z \to SO(3) \]
which extends $\varphi$, in the sense that $\psi(a,0) = \varphi(a)$ for all $a \in H$.
\end{lemma}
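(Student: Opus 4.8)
The plan is to write the extension down explicitly and then verify that it works; the lemma is purely formal, so the whole task amounts to guessing the right formula and checking three things. The only natural candidate is
\[ \psi(a,h) = \varphi(a)\, A^h, \qquad (a,h) \in H \rtimes_\tau \Z/k\Z, \]
and I would define $\psi$ by this and confirm that it is (i) well defined, (ii) a homomorphism into $SO(3)$, and (iii) equal to $\varphi$ on the subgroup $\{(a,0)\}$.

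First I would dispatch well-definedness, which is the one place the hypotheses are genuinely used. Since $\tau$ has order dividing $k$ we have $\tau^k = \mathrm{id}_H$, which is exactly what makes the semidirect product well defined, so $\tau^h$ depends only on $h \bmod k$. For the factor $A^h$ I would invoke the assumption that the order of $A$ divides the order of $\tau$, which in turn divides $k$; hence $A^k = \Id$ and $A^h$ likewise depends only on $h \bmod k$. Thus $\psi$ is a well-defined map. The key computational input is the iterated equivariance relation
\[ \varphi(\tau^m(a)) = A^m \varphi(a)\, A^{-m} \qquad \text{for all } m \geq 0,\ a \in H, \]
which follows by an immediate induction on $m$ from the case $m=1$ given in the statement.

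Granting this, the homomorphism property is a direct calculation: for $(a,h),(b,l) \in H \rtimes_\tau \Z/k\Z$,
\begin{align*}
\psi\big((a,h)\cdot(b,l)\big) &= \psi\big(a\,\tau^h(b),\, h+l\big) = \varphi(a)\,\varphi\big(\tau^h(b)\big)\, A^{h+l} \\
&= \varphi(a)\, A^h \varphi(b)\, A^{-h}\, A^{h+l} = \big(\varphi(a) A^h\big)\big(\varphi(b) A^l\big) = \psi(a,h)\,\psi(b,l),
\end{align*}
using that $\varphi$ is a homomorphism for the second equality and the iterated equivariance for the third. Finally $\psi(a,0) = \varphi(a) A^0 = \varphi(a)$, so $\psi$ extends $\varphi$ as required. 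There is no serious obstacle here; the only subtlety worth flagging is that the order condition $\mathrm{ord}(A) \mid \mathrm{ord}(\tau) \mid k$ is not decorative but is precisely what guarantees $A^k = \Id$, hence the well-definedness of $A^h$ on $\Z/k\Z$.
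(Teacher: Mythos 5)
Your proof is correct and follows essentially the same route as the paper: the paper also defines $\psi((a,h)) := \varphi(a)A^h$ and verifies the homomorphism property by the identical computation. Your additional remarks on well-definedness of $A^h$ modulo $k$ and the inductive proof of the iterated equivariance $\varphi(\tau^m(a)) = A^m\varphi(a)A^{-m}$ simply make explicit details the paper leaves implicit.
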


\begin{proof}
We simply define $\psi((a,h)):= \varphi(a) A^{h}$. Then the equivariance of $\varphi$ implies that we have
\begin{align*}
\psi((a \, \tau^{h}(b), h+l)) &= \varphi(a) \, \varphi(\tau^{h}(b))\, A^{h+l} \\
&= \varphi(a) \cdot A^h \varphi(b) A^{-h}\cdot A^{h+l} \\
&= \varphi(a) A^h \cdot \varphi(b) A^l = \psi((a,h)) \cdot \psi((b,l))
\end{align*}
for all $(a,h), (b,l) \in H \rtimes_{\tau} \Z/k\Z$. 
\end{proof}

In the short exact sequence \eqref{eq:ses branched cover} above, the group $\pi_1(S^3 \setminus N(K))/\llangle \mu^2 \rrangle$ is a semi-direct product
\begin{equation} \label{eq:sigma-semidirect}
\pi_1(S^3 \setminus N(K))/\llangle \mu^2 \rrangle \cong \pi_1(\dcover(K)) \rtimes_\tau \Z/2\Z,
\end{equation}
with $\tau$ being the involution which defines the branched double cover $\dcover(K)$ of $K$.

We now recall some useful facts from \cite[Lemma~3.2]{zentner-simple}.  The group $H_1(\dcover(K))$ has order $\det(K) = |\Delta_K(-1)|$, which is odd, so $\dcover(K)$ is a $\Z/2\Z$-homology sphere.  Every representation $\pi_1(\dcover(K))\to SO(3)$ lifts to $SU(2)$ because the obstruction to doing so lives in $H^2(\dcover(K);\Z/2\Z)=0$, so the map
\begin{equation} \label{eq:sigma-su2-so3}
\Hom(\pi_1(\dcover(K)), SU(2)) \to \Hom(\pi_1(\dcover(K)), SO(3))
\end{equation}
induced by composition with the covering map $SU(2)\to SO(3)$ is surjective.  It is also injective because any two $SU(2)$-representations with the same image agree up to multiplication by a character $\pi_1(\dcover(K)) \to \{\pm 1\}$, which factors through the abelianization $H_1(\dcover(K))$ and is thus trivial.  Moreover, any abelian $SO(3)$-representation factors through $H_1(\dcover(K))$ and thus has image a finite, odd-order subgroup of $SO(3)$; these are all cyclic, generated by a rotation of odd order about some fixed axis, and their preimages in $SU(2)$ are also cyclic.  Thus the bijection \eqref{eq:sigma-su2-so3} takes cyclic representations to cyclic ones and non-abelian representations to non-abelian ones.

\begin{proposition}\label{prop: SU(2)-cyclic}
If $K$ is an $SU(2)$-simple knot, and if the representation
\[ \varphi\colon \pi_1(\dcover(K)) \to SO(3) \]
is $\Z/2\Z$-equivariant in the sense of Lemma~\ref{le:equivariant} above, then its lift to
\[ \hat\varphi: \pi_1(\dcover(K)) \to SU(2) \]
has cyclic image.
\end{proposition}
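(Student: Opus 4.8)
The plan is to reduce the statement, via the bijection \eqref{eq:sigma-su2-so3}, to showing that $\varphi$ itself has cyclic image, and then to realize $\varphi$ as the restriction of a binary dihedral representation of the knot group. Since $H_1(\dcover(K))$ has odd order, the bijection \eqref{eq:sigma-su2-so3} carries cyclic $SO(3)$-representations to cyclic $SU(2)$-representations and identifies $\hat\varphi$ with the unique $SU(2)$-lift of $\varphi$; hence it suffices to prove that $\varphi$ has cyclic image.

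First I would apply Lemma~\ref{le:equivariant} with $H=\pi_1(\dcover(K))$ and $k=2$: the $\Z/2\Z$-equivariance of $\varphi$ produces an extension $\psi\colon \pi_1(\dcover(K)) \rtimes_\tau \Z/2\Z \to SO(3)$ with $\psi(a,0)=\varphi(a)$ and $\psi(1,1)=A$, where $A$ has order dividing $2$. Using the identification \eqref{eq:sigma-semidirect}, I regard $\psi$ as a representation of $\pi_1(S^3\setminus N(K))/\llangle \mu^2\rrangle$ and precompose with the quotient map to obtain $\bar\psi\colon \pi_1(S^3\setminus N(K)) \to SO(3)$ with $\bar\psi(\mu)=A$.

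The key step is to lift $\bar\psi$ to $SU(2)$. Here it is essential to work on the knot group rather than on the quotient: the relation $\mu^2=1$ obstructs lifting $\psi$ directly when $A$ has order $2$, whereas the knot exterior $X=S^3\setminus N(K)$ is aspherical with $H^2(X;\Z/2\Z)=0$ (by universal coefficients, using $H_2(X;\Z)=0$ and $H_1(X;\Z)=\Z$). Thus the lifting obstruction in $H^2(\pi_1(X);\Z/2\Z)\cong H^2(X;\Z/2\Z)$ vanishes, and I obtain $\rho\colon \pi_1(S^3\setminus N(K))\to SU(2)$ covering $\bar\psi$, so that $\rho(\mu)$ is a preimage of $A$. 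If $A$ has order $2$, then $\rho(\mu)$ is a purely imaginary unit quaternion, so the $SU(2)$-simplicity of $K$ forces $\rho$ to be binary dihedral; its restriction $\overline{\overline{\rho}}$ to $\pi_1(\dcover(K))$ is then cyclic, and since this restriction is exactly $\varphi$, we are done. If instead $A=1$, then $\bar\psi(\mu)=1$, and because the meridian $\mu$ normally generates the knot group, $\bar\psi$ (hence $\varphi$) is trivial and in particular cyclic.

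The main obstacle is the lifting step together with the accompanying bookkeeping: one must lift on the knot exterior rather than on $\pi_1(\dcover(K)) \rtimes_\tau \Z/2\Z$ so that the meridian acquires trace zero, and then carefully identify the restriction $\overline{\overline{\rho}}=\rho|_{\pi_1(\dcover(K))}$ with $\varphi$ in order to transport the cyclicity conclusion back. The oddness of $|H_1(\dcover(K))|$ enters twice: in matching $\hat\varphi$ with the $SU(2)$-lift of $\varphi$, and in guaranteeing that a cyclic $SO(3)$-image yields a cyclic $SU(2)$-image.
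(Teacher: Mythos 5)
Your proof is correct and follows essentially the same route as the paper's: extend $\varphi$ to $\pi_1(S^3\setminus N(K))/\llangle\mu^2\rrangle$ via Lemma~\ref{le:equivariant} and the identification \eqref{eq:sigma-semidirect}, use $SU(2)$-simplicity to force (binary) dihedral image, deduce that $\varphi$ itself has cyclic image, and transport this back to $\hat\varphi$ through the bijection \eqref{eq:sigma-su2-so3}. The only divergence is in where the detail sits: you make explicit the lifting step (asphericity of the knot exterior and $H^2(S^3\setminus N(K);\Z/2)=0$ give an $SU(2)$-lift with trace-zero meridian, plus the normal-generation argument when $A=1$) that the paper's terse assertion ``the quotient group has only dihedral representations in $SO(3)$'' leaves implicit, while you delegate to the cited binary-dihedral-restriction fact the determinant/odd-order argument---namely that $\det\circ\psi|_{\pi_1(\dcover(K))}$ factors through the odd-order group $H_1(\dcover(K))$ and is hence trivial---which the paper carries out explicitly.
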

\begin{proof}
By Lemma~\ref{le:equivariant} above and the identification \eqref{eq:sigma-semidirect}, the representation $\varphi$ extends to a representation
\[ \psi\colon \pi_1(S^3 \setminus N(K))/\llangle \mu^2 \rrangle \to SO(3). \]
As $K$ is $SU(2)$-simple, the group $\pi_1(S^3\setminus N(K))/\llangle \mu^2 \rrangle$ has only dihedral representations in $SO(3)$, and hence $\psi$ is conjugate to a representation mapping into the dihedral group $O(2)$ as defined above.  
 
We claim that $\psi$, when restricted to $\pi_1(\dcover(K))$, must have image in $SO(2) \subseteq O(2) \subseteq SO(3)$.  Replacing $\psi$ with a conjugate whose image lies in $O(2)$, we consider the composition
\[ \pi_1(\dcover(K)) \xrightarrow{\psi} O(2) \xrightarrow{\det} \{\pm1\}. \]
As a homomorphism to an abelian group, it factors through $H_1(\dcover(K))$.  For a knot $K$ the latter group has order $\det(K) = |\Delta_K(-1)|$, which is odd.  Therefore the image of the composition $\det \circ \psi|_{\pi_1(\dcover(K))}$ must be trivial, and this means that $\varphi = \psi|_{\pi_1(\dcover(K))}$ has image in $SO(2)$, which is abelian.  But we have seen in the discussion preceding this proposition that the lift $\hat\varphi$ to $SU(2)$ must therefore have cyclic image, as desired.
\end{proof}

\begin{proposition}\label{prop: su2-cyclic branched double cover}
Suppose $K$ is an $SU(2)$-simple knot, and suppose also that the $SL(2,\C)$-character variety of the branched double cover $\dcover(K)$ consists entirely of characters which are invariant under the involution $\tau$.  Then $\dcover(K)$ is $SU(2)$-cyclic. 	
\end{proposition}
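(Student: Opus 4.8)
The plan is to show directly that every representation $\rho\colon \pi_1(\dcover(K)) \to SU(2)$ has cyclic image. First I would dispose of the abelian case for free: if $\rho$ is abelian then it factors through $H_1(\dcover(K))$, which is finite of odd order, so its image is a finite abelian subgroup of $SU(2)$ and hence cyclic (as already noted in the discussion preceding Proposition~\ref{prop: SU(2)-cyclic}). It therefore suffices to rule out non-abelian $\rho$, and I will argue by contradiction. Since $SU(2)$ acts unitarily on $\C^2$, any invariant line has an invariant orthogonal complement, so a reducible $SU(2)$-representation is simultaneously diagonalizable and hence abelian; thus our non-abelian $\rho$ is in fact irreducible.

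Next I would convert the character-theoretic hypothesis into an honest conjugacy. Recall from the decomposition \eqref{eq:sigma-semidirect} that $\tau$ acts on $\pi_1(\dcover(K))$ as an automorphism with $\tau^2 = \mathrm{id}$. Then $\rho\circ\tau$ is again an irreducible $SU(2)$-representation, and by hypothesis its character satisfies $\tr(\rho\circ\tau) = \tr(\rho)$, since every $SL(2,\C)$-character of $\dcover(K)$ is $\tau$-invariant. Two irreducible unitary representations with equal characters are unitarily equivalent, so there exists $B \in SU(2)$ with $\rho(\tau(g)) = B\,\rho(g)\,B^{-1}$ for all $g$. Applying this relation twice and using $\tau^2=\mathrm{id}$ yields $\rho(g) = B^2\,\rho(g)\,B^{-2}$ for all $g$, so $B^2$ is central by Schur's lemma, i.e.\ $B^2 = \pm 1$.

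I would then pass to $SO(3)$. Let $q\colon SU(2)\to SO(3)$ be the double cover, and set $\varphi = q\circ\rho$ and $A = q(B)$. Then $A^2 = q(B^2) = q(\pm 1) = \Id$, and $\varphi(\tau(g)) = A\,\varphi(g)\,A^{-1}$ for all $g$, so $\varphi$ is $\Z/2\Z$-equivariant in the sense of Lemma~\ref{le:equivariant}, with $A$ of order dividing $2$ (and hence dividing the order of $\tau$). Because $K$ is $SU(2)$-simple, Proposition~\ref{prop: SU(2)-cyclic} now applies and shows that the $SU(2)$-lift $\hat\varphi$ of $\varphi$ has cyclic image. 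But the lift of $\varphi = q\circ\rho$ is $\rho$ itself, by the injectivity of the correspondence \eqref{eq:sigma-su2-so3}, so $\rho$ has cyclic—hence abelian—image, contradicting our standing assumption. Therefore no non-abelian representation exists, and combined with the abelian case this shows $\dcover(K)$ is $SU(2)$-cyclic.

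The hard part will be the middle step: upgrading the purely character-theoretic hypothesis (that $\rho$ and $\rho\circ\tau$ share a character) to a genuine conjugating element $B\in SU(2)$, and then pinning down $B^2=\pm1$ so that the induced $A\in SO(3)$ has order dividing that of $\tau$. Both points depend essentially on the irreducibility of $\rho$, through unitary rigidity and Schur's lemma; for reducible (abelian) $\rho$ one only controls the semisimplification, which is precisely why that case is handled separately at the outset. The remaining bookkeeping—identifying the lift of $q\circ\rho$ with $\rho$ and checking that the equivariance hypothesis of Proposition~\ref{prop: SU(2)-cyclic} is met—is routine.
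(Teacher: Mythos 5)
Your proposal is correct and follows essentially the same route as the paper: use $\tau$-invariance of characters to produce a conjugating element $B \in SU(2)$ with $\rho\circ\tau = B\rho B^{-1}$, deduce $B^2 = \pm 1$ from Schur's lemma, descend to an $\Z/2\Z$-equivariant $SO(3)$-representation, and invoke Proposition~\ref{prop: SU(2)-cyclic} to reach a contradiction. The only difference is one of packaging: where you cite unitary rigidity (equal characters of irreducible unitary representations imply unitary equivalence) as a single known fact, the paper obtains the same conclusion in two steps, via \cite[Proposition~1.5.2]{culler-shalen-splittings} for conjugacy in $SL(2,\C)$ and Klassen's argument to move the conjugating matrix into $SU(2)$.
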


\begin{proof}
We wish to show that every representation $\pi_1(\dcover(K)) \to SU(2)$ has cyclic image.  Supposing otherwise, let
\[ \hat{\varphi}: \pi_1(\dcover(K)) \to SU(2) \]
be a representation with non-cyclic image, which is equivalent to $\hat{\varphi}$ being irreducible, and let $\varphi: \pi_1(\dcover(K)) \to SO(3)$ be its image under the bijection \eqref{eq:sigma-su2-so3}.  We claim that $\varphi$ is $\Z/2\Z$-equivariant in the sense of Lemma~\ref{le:equivariant}, at which point the assumption of $SU(2)$-simplicity together with Proposition~\ref{prop: SU(2)-cyclic} tells us that $\hat\varphi$ has cyclic image and we have a contradiction.
		
Both of the irreducible representations
\[ \hat\varphi,\ \hat\varphi\circ\tau: \pi_1(\dcover(K)) \to SU(2) \subset SL(2,\C) \]
have the same characters by assumption. By \cite[Proposition~1.5.2]{culler-shalen-splittings}, these two representations are conjugate: there exists a matrix $\hat{A} \in SL(2,\C)$ such that 
\begin{equation}\label{eq:equivariance varphi}
\hat{\varphi} \circ \tau = \hat{A} \, \hat{\varphi} \, \hat{A}^{-1}.
\end{equation}
Since $\hat\varphi$ and $\hat\varphi \circ \tau$ are irreducible $SU(2)$ representations which are conjugate in $SL(2,\C)$, an argument of Klassen \cite[Proof of Proposition~15]{klassen} says that we can take $\hat{A}$ to be an element of $SU(2)$ as well.  Moreover, as $\tau$ has order $2$, it follows that 
\[ \hat{A}^2 \hat{\varphi} \hat{A}^{-2} = \hat{\varphi}. \]
By Schur's lemma (which applies since $\hat{\varphi}$ is irreducible) it follows that $\hat{A}^2$ must be a multiple of the identity, hence $\hat{A}^2 = \pm 1$.

Letting $A \in SO(3)$ be the image of $\hat{A} \in SU(2)$, with $A^2 = \Id$, we now obtain
\[ \varphi \circ \tau = A \varphi A^{-1} \]
from equation~\eqref{eq:equivariance varphi} above.  This is the definition of $\Z/2\Z$-equivariance, so the proof of the claim is complete.
\end{proof}

Using a result of Cornwell \cite{cornwell}, we are now able to prove:

\begin{theorem}\label{thm: bridge number 3}
Suppose $K$ is an $SU(2)$-simple knot of bridge number $b(K) \leq 3$. Then the branched double cover $\dcover(K)$ is $SU(2)$-cyclic. 
\end{theorem}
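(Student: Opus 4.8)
The plan is to reduce everything to Proposition~\ref{prop: su2-cyclic branched double cover}, which says that an $SU(2)$-simple knot $K$ has $SU(2)$-cyclic branched double cover $\dcover(K)$ as soon as the entire $SL(2,\C)$-character variety of $\pi_1(\dcover(K))$ is fixed by the covering involution $\tau$. Since $K$ is assumed $SU(2)$-simple, the whole content is to verify this $\tau$-invariance. I would first dispose of the cases $b(K)\leq 2$: then $\dcover(K)$ is $S^3$ or a lens space, so $\pi_1(\dcover(K))$ is cyclic and $\dcover(K)$ is $SU(2)$-cyclic directly from Definition~\ref{def:su2-cyclic}, with no appeal to Proposition~\ref{prop: su2-cyclic branched double cover} needed. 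This leaves the essential case $b(K)=3$.

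For $b(K)=3$ I would separate reducible from irreducible characters. Every reducible character agrees with that of an abelian representation, which factors through $H_1(\dcover(K))$; since $\tau$ acts on $H_1(\dcover(K))$ by $-\Id$ (the standard action of the deck transformation of a knot's double branched cover), any such $\rho$ satisfies $\rho(\tau(g)) = \rho(g)^{-1}$, and hence $\tr(\rho(\tau(g))) = \tr(\rho(g))$ because $\tr(M)=\tr(M^{-1})$ in $SL(2,\C)$. Thus the reducible characters are automatically $\tau$-invariant, and the whole difficulty is concentrated in the irreducible ones. Here I would invoke Cornwell's \cite[Theorem~1.1]{cornwell}: his analysis of the $SL(2,\C)$-character variety of the branched double cover of a $3$-bridge knot shows precisely that each such character is $\tau$-invariant, equivalently that every irreducible $\rho$ is conjugate in $SL(2,\C)$ to $\rho\circ\tau$. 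With $\tau$-invariance of the full character variety established, Proposition~\ref{prop: su2-cyclic branched double cover} applies verbatim and gives that $\dcover(K)$ is $SU(2)$-cyclic.

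The main obstacle is exactly the $b(K)=3$ input, i.e.\ checking that Cornwell's theorem really delivers the hypothesis of Proposition~\ref{prop: su2-cyclic branched double cover}. This is where the bridge number is used essentially: a $3$-bridge presentation exhibits $\dcover(K)$ with a genus-$2$ Heegaard splitting on which $\tau$ restricts as the hyperelliptic involution, and it is this symmetry that forces the involution to act trivially on characters. In writing the argument I would be careful to confirm that Cornwell's statement is at the level of the \emph{complex} character variety, since Proposition~\ref{prop: su2-cyclic branched double cover} demands $\tau$-invariance of all $SL(2,\C)$ characters rather than merely the $SU(2)$ ones; once this is pinned down the remainder is a formal application. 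I would also emphasise that the $SU(2)$-simplicity hypothesis is indispensable and enters only through Proposition~\ref{prop: su2-cyclic branched double cover}, as $\tau$-invariance by itself says nothing about the size of an $SU(2)$ representation's image.
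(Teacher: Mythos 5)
Your proposal is correct and follows essentially the same route as the paper: cite Cornwell's \cite[Theorem~1.1]{cornwell} to get $\tau$-invariance of the full $SL(2,\C)$-character variety of $\dcover(K)$, then apply Proposition~\ref{prop: su2-cyclic branched double cover}. The extra steps you include (disposing of $b(K)\leq 2$ separately, and checking $\tau$-invariance of reducible characters by hand via the $-\Id$ action on $H_1$) are correct but not needed, since Cornwell's theorem as invoked in the paper already covers all characters for every knot with $b(K)\leq 3$.
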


\begin{proof}
By \cite[Theorem~1.1]{cornwell}, a knot with bridge number $b(K) \leq 3$ has the property that the $SL(2,\C)$-character variety $X(\dcover(K))$ consists entirely of characters which are invariant under the action of the involution $\tau$ of $\dcover(K)$. The conclusion therefore follows from Proposition \ref{prop: su2-cyclic branched double cover}.
\end{proof}

\begin{example} \label{ex:m036}
Cornwell \cite[\S 6]{cornwell} proves that $8_{18}$ is $SU(2)$-simple, and it has bridge index 3, so its branched double cover $\dcover(8_{18})$ is $SU(2)$-cyclic.  This 3-manifold was previously mentioned in \cite[\S 10]{zentner-simple}.  It is known to be the branched 4-fold cyclic cover of the figure-eight knot \cite{vesnin-mednykh}, hence it is arithmetic \cite{hlm-arithmeticity} with fundamental group the Fibonacci group
\[ F(2,8) = \langle x_0,x_1,\dots,x_7 \mid x_i x_{i+1} \equiv x_{i+2} \ \forall i \in \Z/8\Z \rangle. \]
(It would be interesting to prove directly from this presentation that all representations $F(2,8) \to SU(2)$ have cyclic image.)  In the Hodgson--Weeks census of closed hyperbolic 3-manifolds it is identified as $m036(-3,2)$, and it cannot be Dehn surgery on a knot in $S^3$ since $H_1(\dcover(8_{18})) = \Z/3\Z \oplus \Z/15\Z$ is not cyclic.  However, the one-cusped hyperbolic manifold $m036$ has at least four $SU(2)$-cyclic fillings, including
\begin{align*}
m036(1,0) &= L(3,1) & 
m036(-1,1) &= L(21,8) \\
m036(0,1) &= S^2((3,1),(3,2),(3,-1)) &
m036(-3,2) &= \dcover(8_{18})
\end{align*}
according to \cite{dunfield-exceptional}.  Notably, the $\dcover(8_{18})$ slope has distance three from the Seifert fibered, non-lens space slope.
\end{example}

\begin{example} \label{ex:m100}
The 3-bridge knot $10_{109}$ is also $SU(2)$-simple \cite[\S 6]{cornwell}, so $\dcover(10_{109})$ is $SU(2)$-cyclic.  It is identified in the hyperbolic census as $m100(2,3)$.  We remark that $m100$ also has two lens space fillings, namely
\begin{align*}
m100(1,0) &= \RP^3, & m100(1,1) &= L(29,12)
\end{align*}
according to \cite{dunfield-exceptional}.  Here the $\dcover(10_{109})$ slope is distance three from the $\RP^3$ slope.
\end{example}

\subsection{Obstructions to 1-domination} \label{ssec:obstruction-1-domination}

By a result of Liu--Sun \cite{Liu-Sun}, given any 3-manifold $N$ and a hyperbolic 3-manifold $Y$, there is some finite cover $\tilde{Y}$ of $Y$ which admits a degree-1 map $\tilde{Y} \to N$.  Such a map is also called a $1$-domination. In short, every hyperbolic 3-manifold virtually 1-dominates any other 3-manifold.  The following shows that the adverb ``virtually'' cannot be removed.

\begin{theorem} \label{thm:obstruction-1-domination}
Let $Y$ be an $SU(2)$-cyclic hyperbolic rational homology 3-sphere, with $H_1(Y;\Z)$ of odd order.  Then $Y$ does not admit a map of degree 1 to any Seifert fibered 3-manifold with three or more singular fibers.
\end{theorem}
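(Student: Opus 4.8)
The plan is to argue by contradiction. Suppose $f \colon Y \to Z$ is a degree-$1$ map onto a Seifert fibered $3$-manifold $Z$ whose base orbifold has $n \geq 3$ cone points (equivalently, $n \geq 3$ singular fibers, all with $\alpha_i \geq 2$). The first step is the standard fact that a degree-one map between closed oriented $3$-manifolds is surjective on fundamental groups, so $f_* \colon \pi_1(Y) \to \pi_1(Z)$ is onto; abelianizing, $H_1(Y;\Z) \to H_1(Z;\Z)$ is onto as well. Since $H_1(Y;\Z)$ is finite of odd order, $H_1(Z;\Z)$ is a quotient of it and hence also finite of odd order; in particular $Z$ is a rational homology sphere.

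Next I would transfer the $SU(2)$-cyclicity of $Y$ to $Z$. Given any $\rho \colon \pi_1(Z) \to SU(2)$, the composite $\rho \circ f_*$ has image $\rho(f_*(\pi_1(Y))) = \rho(\pi_1(Z))$ by surjectivity, and this image is cyclic because $Y$ is $SU(2)$-cyclic. Thus every representation of $\pi_1(Z)$ has cyclic image, so $Z$ is itself $SU(2)$-cyclic, and in particular $SU(2)$-abelian.

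The third step is to pin down $Z$ using the classification. Since $Z$ is $SU(2)$-abelian and Seifert fibered, Proposition~\ref{prop:possible-bases} restricts its base $B$ to $S^2$, $\RP^2$, or $T^2$. A base of $T^2$ with $n \geq 1$ singular fibers is excluded by Proposition~\ref{prop:sfs-over-t2} (the only $SU(2)$-abelian examples over $T^2$ are circle bundles, which have no singular fibers), and a base of $\RP^2$ with $n \geq 2$ is excluded by Proposition~\ref{prop:rp2-n-large}; since $n \geq 3$, we must have $B = S^2$. Now Theorem~\ref{thm:sfs-over-s2} lists the $SU(2)$-abelian Seifert fibrations over $S^2$, and the alternatives $S^3$, lens spaces, and $S^1 \times S^2$ cannot occur here: each has cyclic fundamental group, which via the surjection \eqref{eq:pi1-triangle} onto the orbifold group $\Delta(\alpha_1,\dots,\alpha_n)$ would force $\Delta(\alpha_1,\dots,\alpha_n)$ to be cyclic. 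This is impossible for $n \geq 3$ with all $\alpha_i \geq 2$, since killing $c_1,\dots,c_{n-3}$ shows that $\Delta(\alpha_1,\dots,\alpha_n)$ surjects onto the triangle group $\Delta(\alpha_{n-2},\alpha_{n-1},\alpha_n)$, which is non-cyclic. Hence $Z$ has base orbifold $S^2(2,4,4)$ or $S^2(3,3,3)$.

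Finally I would derive the contradiction from Lemma~\ref{lem:homology-sfs}. For base $S^2(2,4,4)$ it gives $|H_1(Z;\Z)| = |8\beta_3 + 8\beta_2 + 16\beta_1|$, which is even whenever it is finite; for base $S^2(3,3,3)$ it gives $|H_1(Z;\Z)| = 9\,|\beta_1+\beta_2+\beta_3|$, and being $SU(2)$-abelian forces $\beta_1+\beta_2+\beta_3$ to be even (or zero) by Proposition~\ref{prop:333-cyclic}, so again $|H_1(Z;\Z)|$ is even or infinite. This contradicts the oddness and finiteness of $|H_1(Z;\Z)|$ established in the first step, completing the proof. The main subtlety, and the step I would be most careful about, is this identification of $Z$: one must genuinely rule out the cyclic-$\pi_1$ manifolds (which a priori might carry some exotic fibration with three singular fibers) together with the bases $\RP^2$ and $T^2$, rather than appealing directly to the parity of $|H_1|$. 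Indeed a lens space can have odd $|H_1|$, so the homological contradiction only bites once $Z$ has been shown to have base orbifold $S^2(2,4,4)$ or $S^2(3,3,3)$.
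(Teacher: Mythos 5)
Your proposal is correct and follows essentially the same route as the paper's proof: transfer $SU(2)$-cyclicity to the target via the $\pi_1$-epimorphism induced by the degree-1 map, invoke the classification of $SU(2)$-abelian Seifert fibered spaces to conclude that the target has even or infinite $|H_1|$, and contradict the surjection from the odd-order group $H_1(Y;\Z)$. The only difference is that you unpack the classification explicitly (ruling out the bases $T^2$ and $\RP^2$ and the cyclic-$\pi_1$ manifolds via the triangle-group surjection, then computing parities from Lemma~\ref{lem:homology-sfs}), which fills in exactly the detail the paper compresses into its parenthetical ``(i.e.,\ not $S^3$ or a lens space).''
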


Examples of such $Y$ include the branched double covers $\dcover(8_{18})$ and $\dcover(10_{109})$.

\begin{proof}[Proof of Theorem~\ref{thm:obstruction-1-domination}]
A 1-domination $Y \to Y'$ induces an epimorphism of fundamental groups, and hence also an epimorphism $H_1(Y;\Z) \to H_1(Y';\Z)$.   If some representation $\rho: \pi_1(Y') \to SU(2)$ has non-cyclic image then so does the composition
\[ \pi_1(Y) \to \pi_1(Y') \xrightarrow{\rho} SU(2), \]
contradicting the assumption that $Y$ is $SU(2)$-cyclic, and so $Y'$ must also be $SU(2)$-cyclic.

We now look to Theorem~\ref{thm:su2-abelian-sfs}, which classifies the $SU(2)$-cyclic Seifert fibered spaces $Y'$.  It implies that if $Y'$ is a rational homology sphere with at least three singular fibers (i.e.,\ not $S^3$ or a lens space) then the group $H_1(Y';\Z)$ must have even order.  Since $H_1(Y;\Z)$ has odd order it cannot surject onto $H_1(Y';\Z)$, and so $Y$ cannot 1-dominate $Y'$ after all.
\end{proof}

\begin{remark}
There are clearly similar statements one can make about rational homology spheres of even order or of manifolds with positive first Betti number, but we do not have any examples to which these would apply.
\end{remark}

\section{Cyclic covers and $SU(2)$} \label{sec:cyclic-covers}

Following Boyer and Nicas \cite{boyer-nicas}, we say that $\pi_1(Y)$ is \emph{cyclically finite} if each regular cyclic cover of $Y$, other than those of maximal even degree, is a rational homology sphere.  These are precisely the covers with fundamental groups of the form $\ker(\mathrm{ad}(\rho))$, where $\rho: \pi_1(Y) \to SU(2)$ is a representation with cyclic image.

Baldwin and the first author \cite[Theorem~4.6]{bs-stein} proved that an $SU(2)$-cyclic rational homology sphere $Y$ is an L-space in the sense of framed instanton homology, provided that $\pi_1(Y)$ is cyclically finite, but this hypothesis was satisfied in all known examples.  Indeed, the Chern-Simons functional defining $I^\#(Y)$ has critical set $\Hom(\pi_1(Y),SU(2))$, whose homology is $\ZZ^{|H_1(Y)|}$ if $Y$ is $SU(2)$-cyclic, and $\pi_1(Y)$ is cyclically finite if and only if this functional is Morse--Bott at each reducible representation.  In this case a Morse--Bott spectral sequence then shows that $I^\#(Y)$ has rank at most $|H_1(Y)|$, which makes $Y$ an L-space.

\begin{proposition} \label{prop:non-cyclically-finite}
Let $Y$ be a Seifert fibered rational homology sphere with base orbifold $S^2(3,3,3)$ and $|H_1(Y;\Z)|$ even.  Then $Y$ is $SU(2)$-cyclic, but $\pi_1(Y)$ is not cyclically finite.
\end{proposition}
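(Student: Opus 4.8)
The plan is to prove the two assertions separately: the $SU(2)$-cyclicity will follow immediately from results already established, while the failure of cyclic finiteness will require constructing one explicit cover.

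For the first assertion, I would observe that since the base orbifold is $S^2(3,3,3)$ and $|H_1(Y;\Z)|$ is even, Proposition~\ref{prop:333-cyclic} tells us that $Y$ is $SU(2)$-abelian. As $Y$ is a rational homology sphere, the notions of $SU(2)$-abelian and $SU(2)$-cyclic coincide (as noted in the introduction), so $Y$ is $SU(2)$-cyclic.

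The substance of the proposition is the failure of cyclic finiteness, and the plan is to exhibit a single regular cyclic cover of $Y$ that is not a rational homology sphere, of degree too small to be excluded by the definition. The geometric input is that $S^2(3,3,3)$ is a Euclidean orbifold admitting a smooth degree-$3$ orbifold cover by $T^2$ that resolves all three order-$3$ cone points. Concretely, using the presentation~\eqref{eq:pi1-sfs} I would define a surjection $\phi\colon \pi_1(Y) \to \Z/3\Z$ by $c_i \mapsto 1$ and $h \mapsto 0$; the relations $c_i^3 h^{\beta_i}=1$ and $c_1c_2c_3=1$ all map to $0$, so $\phi$ is well-defined and factors through the orbifold group $\Delta(3,3,3)$ of \eqref{eq:pi1-triangle}, whose kernel here is the $\Z^2$ surface subgroup corresponding to the torus cover of the base. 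Pulling back the Seifert fibration of $Y$ along this torus cover produces the degree-$3$ cover $\tilde Y$ associated to $\ker(\phi)$. Because the orbifold cover unwraps each cone point, $\tilde Y$ has no singular fibers and is an honest circle bundle over $T^2$; a routine abelianization of its fundamental group then gives $b_1(\tilde Y) \geq 2$, so $\tilde Y$ is not a rational homology sphere.

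Finally, I would verify that $\tilde Y$ is genuinely one of the covers appearing in the definition of cyclic finiteness, namely of the form $\ker(\mathrm{ad}(\rho))$. Taking $\rho\colon \pi_1(Y)\to SU(2)$ with $\rho(g) = e^{i\cdot\frac{2\pi}{3}\phi(g)}$, the image is the cyclic group of order $3$; since $-1$ lies outside this odd-order image, the covering $SU(2)\to SO(3)$ is injective on it, so $\ker(\mathrm{ad}(\rho)) = \ker(\rho) = \ker(\phi) = \pi_1(\tilde Y)$ and the cover has degree $3$. As $3$ is odd, this cover cannot be one of the excluded covers of maximal even degree, so cyclic finiteness would force $\tilde Y$ to be a rational homology sphere, a contradiction. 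The main obstacle I anticipate is not any single computation but pinning down that the degree-$3$ cover really falls under the definition of cyclic finiteness and escapes the maximal-even-degree exclusion; once the geometric identification of $\tilde Y$ as a circle bundle over $T^2$ is in place, the remaining steps are bookkeeping.
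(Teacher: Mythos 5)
Your proposal is correct and takes essentially the same route as the paper: both produce a regular $3$-fold cover $\tilde Y \to Y$ by pulling back the Seifert fibration along the degree-$3$ torus cover of the base orbifold $S^2(3,3,3)$, and conclude from $b_1(\tilde Y) \geq b_1(T^2) = 2$ that $\tilde Y$ is not a rational homology sphere. The differences are only presentational: you exhibit the torus cover algebraically, as the cover associated to the surjection $\pi_1(Y) \to \Z/3\Z$ sending $c_i \mapsto 1$, $h \mapsto 0$ (the paper constructs it geometrically as $T^2 = \C/\Lambda$ modulo rotation by $e^{2\pi i/3}$), and you additionally spell out the correct bookkeeping that this odd-degree cover equals $\ker(\mathrm{ad}(\rho))$ for the order-$3$ cyclic representation $\rho$, hence is not among the excluded covers of maximal even degree---a point the paper leaves implicit.
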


\begin{proof}
Writing $Y = S^2((3,\beta_1),(3,\beta_2),(3,\beta_3))$, we note that $Y$ is $SU(2)$-cyclic by Theorem~\ref{thm:small-seifert-fibered}.  We will construct a normal, 3-fold cyclic cover $p: \tilde{Y} \to Y$ which is a circle bundle over $T^2$ and thus satisfies $b_1(\tilde{Y}) \geq b_1(T^2) = 2$.

The key observation is that there is a regular, 3-fold cover $T^2 \to S^2(3,3,3)$.  To construct it, we realize the torus as $T^2 = \C/\Lambda$, where $\Lambda \subset \C$ is the lattice spanned by $1$ and $\omega = e^{2\pi i/3}$.  Then multiplication by $\omega$ fixes $\Lambda$ and hence descends to a $\Z/3\Z$-action on $T^2$ with fixed points $0, \frac{1}{3}(1+2\omega)$, and $\frac{1}{3}(2+\omega)$; the quotient by this action gives the covering $\pi: T^2 \to S^2(3,3,3)$.  We pull back the circle bundle $Y \to S^2(3,3,3)$ to $T^2$ along $\pi$ to get a circle bundle $\tilde{Y} \to T^2$ which is a regular, 3-fold cover of $Y$, as desired.
\end{proof}

\begin{remark}
The Seifert fibration $Y \to S^2(3,3,3)$ in Proposition~\ref{prop:non-cyclically-finite} has Euler number
\[ e(Y) = -\sum_{i=1}^3 \frac{\beta_i}{\alpha_i} = -\frac{1}{3} \sum_{i=1}^3 \beta_i, \]
and $3e(Y) = -\sum \beta_i$ is even and nonzero since Lemma~\ref{lem:homology-sfs} tells us that $|H_1(Y)| = 27|e(Y)|$. 
The Euler number of the circle bundle $\tilde{Y} \to T^2$ is $e(\tilde{Y}) = 3e(Y)$ by \cite[Theorem~3.3]{jankins-neumann}, and since this is a nonzero, even integer, the cover $\tilde{Y}$ is also $SU(2)$-abelian by Theorem~\ref{thm:su2-abelian-sfs}.
\end{remark}

It is also natural to ask whether being $SU(2)$-abelian is preserved under taking covers, and likewise for not being $SU(2)$-abelian.  In both cases, the answer is no.

\begin{proposition}
There are infinitely many double covers of the form $\tilde{Y} \to Y$, with $\tilde{Y}$ and $Y$ both rational homology spheres, for which $\tilde{Y}$ is $SU(2)$-cyclic but $Y$ is not.
\end{proposition}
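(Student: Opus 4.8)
The plan is to realize these covers as lens spaces double-covering prism manifolds, using the orbifold pullback construction from Proposition~\ref{prop:non-cyclically-finite}. For each integer $n \geq 2$ I would take the Seifert fibered space
\[ Y_n = S^2((2,1),(2,1),(n,1)), \]
whose fibration has Euler number $e(Y_n) = -\left(1 + \tfrac1n\right) \neq 0$. By Lemma~\ref{lem:homology-sfs} we have $|H_1(Y_n;\Z)| = 4(n+1)$, so $Y_n$ is a rational homology sphere, and since its base orbifold is $S^2(2,2,n) \neq S^2(2,4,4)$, Proposition~\ref{prop:some-alpha-is-2} produces a non-abelian representation $\pi_1(Y_n) \to SU(2)$. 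As a rational homology sphere, $Y_n$ is therefore not $SU(2)$-cyclic.

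Next I would construct the double cover. The base orbifold $S^2(2,2,n)$ has orbifold fundamental group $\Delta(2,2,n)$, which is the dihedral group of order $2n$ generated by the involutions $c_1, c_2$, with $c_3 = c_2c_1$ of order $n$. Its cyclic rotation subgroup $\langle c_3\rangle$ has index two, and the corresponding orbifold double cover is $S^2(n,n) \to S^2(2,2,n)$: the two order-two cone points each lift to a single smooth point, while the order-$n$ cone point lifts to two cone points of order $n$. Concretely this cover is classified by the homomorphism $\pi_1(Y_n) \to \Z/2\Z$ sending $c_1, c_2 \mapsto 1$ and $c_3, h \mapsto 0$, which is well defined on the presentation \eqref{eq:pi1-sfs}; since the fiber class $h$ lies in its kernel, the circle fibration lifts and the cover is fiberwise.

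Pulling back the Seifert fibration along $S^2(n,n) \to S^2(2,2,n)$, exactly as in Proposition~\ref{prop:non-cyclically-finite}, then yields a Seifert fibration $\tilde{Y}_n \to S^2(n,n)$ and a double cover $\tilde{Y}_n \to Y_n$. Its base has only two singular fibers, so $\tilde{Y}_n$ is a lens space or $S^1 \times S^2$; and since $e(\tilde{Y}_n) = 2\,e(Y_n) \neq 0$ by \cite[Theorem~3.3]{jankins-neumann}, as in the remark following Proposition~\ref{prop:non-cyclically-finite}, it is in fact a lens space. Lens spaces have cyclic fundamental group and are rational homology spheres, hence are $SU(2)$-cyclic.

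This produces, for every $n \geq 2$, a double cover $\tilde{Y}_n \to Y_n$ of rational homology spheres with $\tilde{Y}_n$ being $SU(2)$-cyclic and $Y_n$ not; and since $|H_1(Y_n;\Z)| = 4(n+1)$ the manifolds $Y_n$ are pairwise non-homeomorphic, giving infinitely many such covers. The step needing the most care -- which I would verify directly from \eqref{eq:pi1-sfs} -- is that the double cover dual to the surjection above really is the fibered pullback over $S^2(n,n)$, and hence a lens space, rather than some unrelated double cover. This follows from the facts that $h$ lies in the kernel, so the $S^1$-fibration is preserved and the induced cover of base orbifolds is $S^2(n,n) \to S^2(2,2,n)$, together with the Euler number computation $e(\tilde Y_n)=2e(Y_n)\neq 0$ identifying the total space as a lens space.
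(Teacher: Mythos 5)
Your proof is correct and follows essentially the same route as the paper's: the paper takes $Y$ to be any prism manifold, i.e.\ a Seifert fibered rational homology sphere over $S^2(2,2,n)$, and obtains the lens space double cover $\tilde{Y} \to Y$ by pulling back the fibration along the orbifold double cover $S^2(n,n) \to S^2(2,2,n)$, exactly as you do. Your write-up simply makes the construction more explicit, fixing the concrete family $S^2((2,1),(2,1),(n,1))$, exhibiting the classifying homomorphism to $\Z/2\Z$, and ruling out $S^1\times S^2$ via the Euler number, all of which are consistent with (and slightly more detailed than) the paper's argument.
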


\begin{proof}
We take $Y$ to be any prism manifold and $\tilde{Y} \to Y$ a double cover by a lens space.  In general, prism manifolds are Seifert fibered over $S^2(2,2,n)$ for some $n$, and the lens space covering comes from pulling back this fibration along the double cover $S^2(n,n) \to S^2(2,2,n)$.  The latter can be easily constructed from the double cover $S^2 \to S^2(2,2)$, corresponding to rotation of $S^2$ by $\pi$ around an axis through the north and south poles, by adding one orbifold point of order $n$ to the base and two such points at its preimages in the cover.
\end{proof}

\begin{proposition}
There are infinitely many double covers of the form $\tilde{Y} \to Y$, with $\tilde{Y}$ and $Y$ both rational homology spheres, for which $Y$ is $SU(2)$-cyclic but $\tilde{Y}$ is not.
\end{proposition}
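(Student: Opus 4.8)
The plan is to run the previous argument in reverse: I would take $Y$ to be $SU(2)$-cyclic and exhibit a double cover $\tilde{Y}$ which is not. By Theorem~\ref{thm:small-seifert-fibered}, every Seifert fibered rational homology sphere with base orbifold $S^2(2,4,4)$ is $SU(2)$-abelian, and hence $SU(2)$-cyclic, so I would let $Y$ be such a manifold. The cover $\tilde{Y}$ will be Seifert fibered over $S^2(2,2,2,2)$; since this base has four singular fibers, Theorem~\ref{thm:large-seifert-fibered} immediately produces a non-abelian representation $\pi_1(\tilde{Y}) \to SU(2)$, so $\tilde{Y}$ is not $SU(2)$-cyclic. (Lens spaces will not work here, since their only covers are again lens spaces, and the $S^2(3,3,3)$ family has no convenient orbifold double cover; the $S^2(2,4,4)$ family is the natural choice.)

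The geometric input is a degree-two orbifold covering $S^2(2,2,2,2) \to S^2(2,4,4)$, which I would build from the torus. Writing $T^2 = \C/\Z[i]$, the rotation $z \mapsto iz$ generates a $\Z/4\Z$-action whose quotient is $S^2(2,4,4)$: the two order-four cone points come from the fixed points $0$ and $\tfrac{1+i}{2}$, while the order-two cone point comes from the size-two orbit $\{\tfrac12,\tfrac{i}{2}\}$. The subgroup $\{\pm1\}\subset \Z/4\Z$ has quotient $T^2/\{\pm1\} = S^2(2,2,2,2)$, with its four order-two cone points at the $2$-torsion of $T^2$, and the inclusion $\{\pm1\}\subset\Z/4\Z$ induces the desired degree-two covering. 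On orbifold fundamental groups this is the index-two kernel of the surjection $\Delta(2,4,4)\to\Z/2\Z$ sending the order-two generator to $0$ and the two order-four generators to $1$; in particular it exists for every $Y$ of this form. Pulling the Seifert fibration $Y\to S^2(2,4,4)$ back along this covering yields a connected double cover $\tilde Y \to Y$ Seifert fibered over $S^2(2,2,2,2)$: over each of the two ramified order-four points the fiber multiplicity drops from $4$ to $2$, and the unramified order-two point acquires two preimages of multiplicity $2$, giving exactly four singular fibers.

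Finally I would check that both manifolds are rational homology spheres and that there are infinitely many. By \cite[Theorem~3.3]{jankins-neumann} the Euler numbers satisfy $e(\tilde Y) = 2\,e(Y)$, so $\tilde Y$ is a rational homology sphere precisely when $Y$ is, namely when $e(Y)\neq 0$. For $Y = S^2((2,\beta_1),(4,\beta_2),(4,\beta_3))$ with all $\beta_i$ odd, Lemma~\ref{lem:homology-sfs} gives $|H_1(Y;\Z)| = 8\,|2\beta_1+\beta_2+\beta_3|$, which is nonzero and assumes infinitely many distinct values (for instance with $\beta_1=\beta_2=1$ and $\beta_3$ an arbitrary positive odd integer). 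This produces infinitely many pairwise distinct $SU(2)$-cyclic manifolds $Y$, each with a double cover $\tilde Y$ that is not $SU(2)$-cyclic. The only real work is the orbifold bookkeeping that identifies $\tilde Y$ as a four-fiber Seifert manifold over $S^2(2,2,2,2)$; I expect this identification, rather than any representation-theoretic issue, to be the main step, after which Theorem~\ref{thm:large-seifert-fibered} finishes the argument.
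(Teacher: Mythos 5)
Your proposal is correct and matches the paper's proof essentially step for step: the paper also takes $Y$ Seifert fibered over $S^2(2,4,4)$, constructs the same orbifold double cover $S^2(2,2,2,2) \to S^2(2,4,4)$ from an order-$4$ symmetry of the torus (the paper uses $f(x,y)=(y,1-x)$ on $(\R/\Z)^2$, which is your rotation by $i$ in different coordinates), pulls back the fibration, applies Theorem~\ref{thm:large-seifert-fibered} to the cover, and uses $e(\tilde Y)=2e(Y)$ to see $\tilde Y$ is a rational homology sphere. Your explicit homology count $|H_1(Y;\Z)| = 8\,|2\beta_1+\beta_2+\beta_3|$ for the infinitude claim is a minor elaboration the paper leaves implicit, not a different argument.
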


\begin{proof}
We identify $T^2 = (\R/\Z)^2$ and define a diffeomorphism
\[ f:T^2 \to T^2, \qquad f(x,y) = (y,1-x) \]
of order 4.  Then $f$ has fixed points $(0,0)$ and $(\frac{1}{2},\frac{1}{2})$, and a single orbit
\[ \{ (\tfrac{1}{2},0), (0,\tfrac{1}{2}) \} \]
of order 2, and all other points of $T^2$ lie in orbits of order $4$. Moreover, the involution $f^2$ has four fixed points, namely
\[ (0,0), (0,\tfrac{1}{2}), (\tfrac{1}{2},0), (\tfrac{1}{2},\tfrac{1}{2}), \]
It follows that the quotient $T^2 / f^2$ is the orbifold $S^2(2,2,2,2)$, while $T^2 / f = S^2(2,4,4)$, and so we have a double cover
\[ S^2(2,2,2,2) \to S^2(2,4,4). \]
We now take any Seifert fibered rational homology sphere $Y \to S^2(2,4,4)$ and pull the fibration back along this double cover to get a double cover $\tilde{Y} \to Y$.  Since $\tilde{Y}$ is Seifert fibered with base $S^2(2,2,2,2)$, however, Theorem~\ref{thm:large-seifert-fibered} says that it is not $SU(2)$-abelian.

To see that $\tilde{Y}$ is a rational homology sphere if $Y$ is, a slight generalization of Lemma~\ref{lem:homology-sfs} says that a Seifert fibered space
\[ S^2((\alpha_1,\beta_1),\dots,(\alpha_n,\beta_n)), \]
with Euler number $e = -\sum \frac{\beta_i}{\alpha_i}$, has first homology of order $\alpha_1\dots\alpha_n \cdot |e|$.  Thus $H_1(Y)$ is finite if and only if $e(Y)$ is nonzero; then $e(\tilde{Y}) = 2e(Y) \neq 0$ by \cite[Theorem~3.3]{jankins-neumann}, and so $H_1(\tilde{Y})$ is finite of order $16|e(\tilde{Y})| = 32|e(Y)| = |H_1(Y)|$.
\end{proof}

\section{Hyperbolic manifolds with many $SU(2)$-cyclic Dehn fillings} \label{sec:four-fillings}

In \cite{sivek-zentner}, we conjectured that the only knots in $S^3$ with infinitely many $SU(2)$-cyclic surgeries are torus knots.  We are still unable to prove this conjecture, or even to find a hyperbolic knot in $S^3$ with more than three non-trivial $SU(2)$-cyclic surgeries.  

In this section we look to other manifolds with torus boundary, and we construct infinitely many one-cusped hyperbolic manifolds which each have at least four $SU(2)$-cyclic Dehn fillings.  
In Remark~\ref{rem:infinitely-many-example} we will also construct a manifold $Y$ with non-trivial JSJ decomposition such that $Y$ has infinitely many $SU(2)$-cyclic Dehn fillings.  We wish to draw attention to Dunfield's census of exceptional Dehn fillings \cite{dunfield-exceptional}, which along with SnapPy \cite{snappy} was instrumental in finding these examples.

In our examples, we use a family of $SU(2)$-cyclic graph manifolds first studied by Motegi \cite{motegi}.  For notation, we let $E_{p,q}$ denote the exterior of any nontrivial torus knot $T_{p,q}$, with meridian $\mu_{p,q}$ and longitude $\lambda_{p,q}$ in $\partial E_{p,q}$.  Then $E_{p,q}$ is Seifert fibered over the disk, and $\sigma_{p,q} = (\mu_{p,q})^{pq}\lambda_{p,q}$ is a generic fiber.

\begin{definition} \label{def:splicing}
Let $T_{a,b}$ and $T_{c,d}$ be two nontrivial torus knots.  The graph manifold
\[ Y(T_{a,b},T_{c,d}) = E_{a,b} \cup_{T^2} E_{c,d} \]
is formed by gluing their exteriors by an orientation-reversing map $\partial E_{a,b} \to \partial E_{c,d}$ which sends $\mu_{a,b}$ to $\sigma_{c,d}$ and $\sigma_{a,b}$ to $\mu_{c,d}$.
\end{definition}

\begin{proposition}[\cite{motegi, zentner-simple, ni-zhang}]
Every $Y(T_{a,b},T_{c,d})$ is $SU(2)$-cyclic.
\end{proposition}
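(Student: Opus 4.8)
The plan is to analyze an arbitrary representation $\rho: \pi_1(Y) \to SU(2)$, where $Y = Y(T_{a,b},T_{c,d}) = E_{a,b}\cup_{T^2} E_{c,d}$, by restricting it to the two torus-knot exteriors. Recall that $\pi_1(E_{p,q}) = \langle u,v\mid u^p=v^q\rangle$ has abelianization $H_1(E_{p,q};\Z)=\Z$ generated by the meridian $\mu_{p,q}$, and infinite cyclic center generated by the regular fiber $\sigma_{p,q}=(\mu_{p,q})^{pq}\lambda_{p,q}$, whose class in $H_1$ is $pq\cdot[\mu_{p,q}]$ since the longitude is nullhomologous. Writing $\rho_A=\rho|_{\pi_1(E_{a,b})}$ and $\rho_B=\rho|_{\pi_1(E_{c,d})}$, these two restrictions must agree on the splicing torus, where the gluing imposes $\mu_{a,b}=\sigma_{c,d}$ and $\sigma_{a,b}=\mu_{c,d}$ inside $\pi_1(Y)$.

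The first step is to rule out the possibility that either restriction is non-abelian. Suppose $\rho_A$ is irreducible. Since $\sigma_{a,b}$ is central in $\pi_1(E_{a,b})$, its image commutes with the non-abelian group $\rho_A(\pi_1(E_{a,b}))$, forcing $\rho(\sigma_{a,b})=\pm1$. The gluing relation $\sigma_{a,b}=\mu_{c,d}$ then gives $\rho(\mu_{c,d})=\pm1$. Now I would invoke the standard fact that a meridian normally generates a knot group: passing to the quotient $SU(2)\to SO(3)$, the image of $\mu_{c,d}$ becomes trivial, so the induced $SO(3)$-representation of $\pi_1(E_{c,d})$ is trivial and hence $\rho_B$ has image in $\{\pm1\}$. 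Consequently $\rho(\mu_{a,b})=\rho(\sigma_{c,d})=\pm1$, and the same meridian argument applied to $E_{a,b}$ forces $\rho_A$ to be abelian --- a contradiction. Thus both $\rho_A$ and $\rho_B$ are abelian.

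The second step handles the surviving case. Since $\rho_A$ and $\rho_B$ are abelian, each factors through $H_1(E_\bullet;\Z)=\Z$, so their images are the cyclic groups generated by $g_A=\rho(\mu_{a,b})$ and $g_B=\rho(\mu_{c,d})$ respectively, and the image of all of $\rho$ is generated by $g_A$ and $g_B$. Reading off the homological degrees of the fibers gives $\rho(\sigma_{a,b})=g_A^{ab}$ and $\rho(\sigma_{c,d})=g_B^{cd}$, so the gluing relations $\sigma_{a,b}=\mu_{c,d}$ and $\sigma_{c,d}=\mu_{a,b}$ translate into $g_B=g_A^{ab}$ and $g_A=g_B^{cd}$. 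Hence $g_B$ is a power of $g_A$, so $\rho(\pi_1(Y))=\langle g_A,g_B\rangle=\langle g_A\rangle$ is cyclic, as desired.

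I expect the main obstacle to be the irreducible case in the first step, where one must combine centrality of the Seifert fiber with the fact that $\rho(\mu)=\pm1$ forces reducibility; the clean way to see the latter is the passage to $SO(3)$, exploiting that a meridian normally generates the knot group. Once non-abelian restrictions are excluded, the remainder is a short computation with the homological degrees $ab$ and $cd$ of the two fibers across the splicing torus.
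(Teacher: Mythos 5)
Your proof is correct, and since the paper offers no proof of its own for this proposition (it simply cites Motegi, Zentner, and Ni--Zhang), the right comparison is with those references: your argument is essentially the standard one found there. Namely, centrality of the Seifert fiber forces $\rho(\sigma_{a,b})=\pm 1$ whenever a restriction is irreducible, the fact that a meridian normally generates a knot group (cleanly exploited via the quotient $SU(2)\to SO(3)$) then collapses both restrictions to $\{\pm 1\}$ for a contradiction, and in the remaining abelian case the homological degrees $ab$ and $cd$ of the fibers give $g_B=g_A^{ab}$ and $g_A=g_B^{cd}$, so the image $\langle g_A,g_B\rangle=\langle g_A\rangle$ is cyclic.
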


\begin{proposition} \label{prop:splicing-facts}
Let $Y = Y(T_{a,b},T_{c,d})$.  Then $H_1(Y)$ is cyclic of order $|abcd-1|$, and $Y$ is not Seifert fibered.  The torus $\partial E_{a,b}$ is the unique closed, incompressible surface in $Y$ up to isotopy.
\end{proposition}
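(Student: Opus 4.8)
The plan is to prove the three assertions separately, in increasing order of difficulty, using the Mayer--Vietoris sequence for the homology statement and the structure theory of Seifert fibered pieces for the other two.

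For the first homology, I would run Mayer--Vietoris on the splitting $Y = E_{a,b}\cup_{T^2} E_{c,d}$. Since each torus knot exterior has $H_1(E_{p,q};\Z)=\Z$ generated by the meridian, with the longitude null-homologous and hence $[\sigma_{p,q}] = pq\,[\mu_{p,q}]$, the only nontrivial map in the sequence is $H_1(T^2)\to H_1(E_{a,b})\oplus H_1(E_{c,d})$. Using the basis $\{\mu_{a,b},\sigma_{a,b}\}$ of $H_1(T^2)$ together with the gluing rules $\mu_{a,b}\mapsto\sigma_{c,d}$ and $\sigma_{a,b}\mapsto\mu_{c,d}$, this map is represented by the integer matrix $\left(\begin{smallmatrix} 1 & ab \\ -cd & -1\end{smallmatrix}\right)$, whose determinant is $abcd-1$. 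As $H_1(Y)$ is the cokernel of this map and the matrix has an entry equal to $1$, its Smith normal form is $\operatorname{diag}(1,\,abcd-1)$, so $H_1(Y)$ is cyclic of order $|abcd-1|$ (and $abcd-1\neq 0$ since the torus knots are nontrivial).

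For the remaining claims I would first record that $Y$ is irreducible and that $T=\partial E_{a,b}$ is incompressible in $Y$: each piece is an irreducible Seifert fibered space with incompressible boundary (no torus knot exterior is a solid torus, as each fibers over a disk with two exceptional fibers), and gluing two such manifolds along an incompressible boundary torus preserves irreducibility and keeps the splitting torus incompressible, by a standard innermost-disk and innermost-sphere argument. To see that $Y$ is not Seifert fibered, suppose it were. Because $T$ separates $Y$, it cannot be horizontal (a horizontal torus is a fiber of a surface bundle over $S^1$, hence non-separating), so up to isotopy $T$ is vertical. Then the ambient fibration restricts to Seifert fibrations of $E_{a,b}$ and $E_{c,d}$ whose fibers agree on $T$; but torus knot exteriors are not among the short list of Seifert manifolds with non-unique fibrations, so these restrictions are isotopic to the standard ones, with fiber slopes $\sigma_{a,b}$ and $\sigma_{c,d}$ on $T$. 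Verticality then forces the gluing to identify $\sigma_{a,b}$ with $\pm\sigma_{c,d}$, whereas in fact it sends $\sigma_{a,b}\mapsto\mu_{c,d}$ and $\mu_{c,d}\neq\pm\sigma_{c,d}$ (their intersection number on $T$ is $1$) --- a contradiction.

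Finally, for uniqueness I would take a closed, connected, two-sided incompressible surface $S\subset Y$ and isotope it to minimize $|S\cap T|$. If $S\cap T=\emptyset$, then $S$ lies in one piece and is incompressible there; since torus knots are small (their exteriors contain no closed essential surface) $S$ is boundary-parallel, hence isotopic to $T$, and in particular a torus. If instead $S\cap T\neq\emptyset$, then after minimizing, $S\cap T$ is a nonempty family of parallel essential curves of a single slope $\gamma$ on $T$, and each of $S\cap E_{a,b}$, $S\cap E_{c,d}$ is an essential (incompressible and $\partial$-incompressible) surface, hence isotopic to a horizontal or vertical one. Thus $\gamma$ must be a boundary slope of an essential surface in $E_{a,b}$, namely $\gamma\in\{\lambda_{a,b},\sigma_{a,b}\}$ (the Seifert-surface slope and the fiber/cabling-annulus slope), and simultaneously the gluing must carry $\gamma$ into $\{\lambda_{c,d},\sigma_{c,d}\}$. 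Comparing the two candidate slopes from each side in the basis $\{\mu_{a,b},\sigma_{a,b}\}$ of $H_1(T)$, the four cross-pairs have intersection numbers $\pm1$, $\pm1$, $\pm1$, and $\pm(abcd-1)$ on $T$, all nonzero, so no $a$-side slope equals any $c$-side slope and no admissible $\gamma$ exists. This excludes the intersecting case, so $S$ is isotopic to $T$. The main obstacle is precisely this last step: everything reduces to the classification of essential surfaces in a torus knot exterior and the exact determination that their boundary slopes are $\lambda_{p,q}$ and $\sigma_{p,q}$, combined with the bookkeeping that the same curve $\gamma\subset T$ must be admissible from both sides. I expect the cleanest route is to invoke the structure theorem that an essential surface in a Seifert fibered space is isotopic to a horizontal or vertical one, which pins down the two candidate slopes on each side, after which the incompatibility is the elementary determinant computation above.
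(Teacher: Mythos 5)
Your proposal is correct in substance, and for the first two claims it takes a genuinely different route from the paper. For $H_1(Y)$ the paper simply cites Motegi's computation, whereas your Mayer--Vietoris argument is self-contained and correct: the map $H_1(T^2)\to H_1(E_{a,b})\oplus H_1(E_{c,d})$ is indeed represented by $\left(\begin{smallmatrix} 1 & ab \\ -cd & -1\end{smallmatrix}\right)$, whose Smith form is $\diag(1,abcd-1)$. For non-Seifert-fiberedness the paper argues via its own classification: $Y$ is $SU(2)$-cyclic (quoted from Motegi, Zentner, Ni--Zhang), so by Theorem~\ref{thm:su2-abelian-sfs} a Seifert fibered $Y$ with finite cyclic $H_1$ would have to be a lens space, contradicting the fact that $\pi_1(Y)$ is infinite. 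Your vertical/horizontal argument avoids all of the $SU(2)$ machinery and the classification theorem entirely, which is a genuine plus in terms of self-containment; the paper's version is shorter only because that theorem is already in hand. For uniqueness of the incompressible torus your argument is essentially the paper's: minimize $|S\cap T|$, observe that the pieces are essential surfaces in torus knot exteriors with boundary slopes in $\{\lambda,\sigma\}$ (the paper cites Tsau for this, you invoke the vertical/horizontal structure theorem), and rule out a common slope; your four intersection numbers $\pm1,\pm1,\pm1,\pm(abcd-1)$ carry out the same bookkeeping as the paper's distance comparison ($1$ versus $0$ or $|cd|>1$), and then both arguments finish with smallness of torus knot exteriors.

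One step needs repair: your parenthetical claim that a horizontal torus in a Seifert fibered space ``is a fiber of a surface bundle over $S^1$, hence non-separating'' is false in general. A two-sided horizontal surface cuts a closed Seifert fibered space into $I$-bundles, and these $I$-bundles may be twisted: a union of two twisted $I$-bundles over the Klein bottle glued along their torus boundaries (for instance, the Euclidean manifolds fibering over $S^2(2,2,2,2)$ with Euler number zero) contains a \emph{separating} horizontal torus. So ``$T$ separates'' does not by itself exclude the horizontal case. The correct exclusion uses the same cutting statement: if $T=\partial E_{a,b}$ were horizontal, then since it separates, each of $E_{a,b}$ and $E_{c,d}$ would be a twisted $I$-bundle over the Klein bottle, which is absurd --- the Klein bottle group is solvable while torus knot groups contain free subgroups of rank two. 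With that one-line fix, your argument that $Y$ is not Seifert fibered goes through as written.
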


\begin{proof}
The claim about $H_1(Y)$ is \cite[Lemma~2]{motegi}.  To see that $Y$ is not Seifert fibered, we note that among the $SU(2)$-cyclic Seifert fibered spaces listed in Theorem~\ref{thm:su2-abelian-sfs}, the only ones with finite cyclic first homology are lens spaces.  On the other hand, $\pi_1(Y)$ is infinite since $Y$ contains an incompressible torus, so $Y$ cannot be a lens space.

Suppose that $S \subset Y$ is a closed, incompressible surface which is not isotopic to $\partial E_{a,b}$; we arrange for it to meet $\partial E_{a,b}$ transversely and in as few components as possible.  The components of $S \cap E_{a,b}$ are incompressible and boundary-incompressible, so they must be either annuli or Seifert surfaces, with boundary slope equal to that of $\sigma_{a,b}$ or $\lambda_{a,b}$ respectively \cite{tsau}.  In either case the boundary slope is distance 1 from that of $\mu_{a,b}$.  But $\partial (S \cap E_{c,d})$ likewise consists of curves parallel to either $\lambda_{c,d}$ or $\sigma_{c,d}$, whose distances to $\sigma_{c,d} = \mu_{a,b}$ are $|cd| > 1$ and $0$ respectively, and so these cannot be parallel to any of the components of $\partial (S \cap E_{a,b})$.  We conclude that $S \cap \partial E_{a,b}$ must be empty, but then $S$ is contained in one of $E_{a,b}$ and $E_{c,d}$, and this is only possible if $S$ is parallel to the boundary torus.
\end{proof}

We now begin our discussion of manifolds with many $SU(2)$-cyclic fillings.   We start with the only two hyperbolic knots we currently know of in $S^3$ with three nontrivial $SU(2)$-cyclic Dehn surgeries.
\begin{example} \label{ex:22n0-surgeries}
The one-cusped hyperbolic manifold $m016$ is the complement of the pretzel knot $P = P(-2,3,7) = 12n_{242}$.  In addition to its $S^3$ filling, it has two lens space fillings
\[ S^3_{18}(P) = L(18,5) \quad\mathrm{and}\quad S^3_{19}(P) = L(19,7), \]
and the surgery of slope $\frac{37}{2}$ is the toroidal, $SU(2)$-cyclic manifold $Y(T_{2,3},T_{-2,3})$.  Even without this identification, SnapPy says that
\[ \pi_1(S^3_{37/2}(P)) = \langle a,b \mid (a^3b)^2=b^3,\ (a^{-1}b^3)^2=a^{-3} \rangle, \]
and any $SU(2)$-representation $\rho$ is easily seen to have cyclic image:  if $\rho(a)^3=\pm1$ then the first relator gives $\rho(b)=1$, forcing the image to be cyclic, and likewise if $\rho(b)^3=
\pm1$ using the second relator.  Otherwise $\rho(b)^3 = \rho(a^3b)^2$ lies in a unique $U(1)$ subgroup of $SU(2)$, which then contains both $\rho(b)$ and $\rho(a^3b)$ and hence also $\rho(a^3)$.  But $\rho(a)^3\neq 1$, so $\rho(a)$ must lie in this subgroup as well and then $\rho$ has abelian (hence cyclic) image.

Similarly, the manifold $m118$ is the complement of the knot labeled $k4_4$ in the Callahan--Dean--Weeks census of hyperbolic knots \cite{callahan-dean-weeks}, where it is also identified as a twisted torus knot $T(4,7)_{2,1}$, i.e., the closure of the 4-stranded braid $(\sigma_3\sigma_2\sigma_1)^7 \sigma_1^2$.  It has lens space surgeries
\[ S^3_{30}(k4_4) = L(30,11) \quad\mathrm{and}\quad S^3_{31}(k4_4) = L(31,12), \]
and its $\frac{61}{2}$-surgery is the toroidal, $SU(2)$-cyclic manifold $Y(T_{-2,3},T_{2,5})$ up to orientation.  Indeed, SnapPy gives a presentation
\[ \pi_1(S^3_{61/2}(k4_4)) = \langle a,b \mid (a^5b)^2=b^3,\ (a^{-2}b^3)^2=a^{-5} \rangle, \]
and again one can directly prove this to be $SU(2)$-cyclic: one first shows that $\rho(a)^5=\pm1$ or $\rho(b)^3=\pm1$ implies $\rho(b)=1$ or $\rho(a)=1$, and if neither of these holds then $\rho(b)^3=\rho(a^5b)^2$ lies in a unique $U(1)$ subgroup, which contains $\rho(b)$ and $\rho(a^5b)$ and hence $\rho(a^5)$, and then also $\rho(a)$ since $\rho(a^5) \neq \pm 1$, so $\rho(a)$ and $\rho(b)$ commute after all.
\end{example}

We now give a family of infinitely many manifolds with four $SU(2)$-cyclic fillings each.  The starting point for our construction is \cite[Theorem~6.5]{zentner-simple}, which describes each $Y=Y(T_{a,b},T_{c,d})$ as the branched double cover of an alternating link $L(T_{a,b},T_{c,d})$; we will replace a crossing of this link (actually a knot for the cases in question) with any of several rational tangles, and then the Montesinos trick tells us that the resulting branched double covers all arise as Dehn surgeries on a single knot in $Y$.

\begin{theorem} \label{thm:four-fillings}
For each integer $g \geq 1$ there is a manifold $M_g$ with torus boundary which admits at least four $SU(2)$-cyclic fillings.  Three of these are the lens spaces
\[ L(2g+5,2),\ L(11g+3,11),\ \mathrm{and}\ L(13g+8,13), \]
and one is the graph manifold $Y(T_{2,3},T_{2,2g+1})$.
\end{theorem}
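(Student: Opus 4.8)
The plan is to use the branched-double-cover description of the splice together with the Montesinos trick, exactly as indicated in the paragraph preceding the statement. First I would invoke \cite[Theorem~6.5]{zentner-simple} to realize $Y_g := Y(T_{2,3},T_{2,2g+1})$ as the branched double cover $\dcover(L_g)$ of an explicit alternating link $L_g \subset S^3$, which for these parameters is in fact a knot. I would fix a standard diagram of $L_g$ and single out one crossing $c$ lying in a small ball $B$ meeting $L_g$ in a trivial rational tangle. Replacing the tangle inside $B$ by the rational tangle of slope $r$ produces a family $L_g(r)$, with $L_g$ itself arising from one particular slope. The preimage of $B$ in the branched double cover is a solid torus whose core is a knot $K_g \subset Y_g$, and I define $M_g := Y_g \setminus N(K_g)$, a manifold with torus boundary. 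The Montesinos trick then identifies each $\dcover(L_g(r))$ with a Dehn filling of $M_g$, the filling slopes being the images of the tangle slopes under the usual correspondence; in particular the slope recovering $L_g$ realizes $Y_g$ as a filling of $M_g$.

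Next I would carry out the tangle computation at four specific slopes. The slope recovering $L_g$ yields the filling $Y_g$, which is $SU(2)$-cyclic by the proposition above stating that every $Y(T_{a,b},T_{c,d})$ is $SU(2)$-cyclic. For three further slopes I claim that $L_g(r)$ simplifies to a two-bridge knot or link, whose branched double cover is therefore a lens space; since lens spaces have cyclic fundamental group they are automatically $SU(2)$-cyclic. Identifying the three lens spaces amounts to computing the two-bridge fractions of the modified diagrams, which I would verify to be $\frac{2g+5}{2}$, $\frac{11g+3}{11}$, and $\frac{13g+8}{13}$, giving the lens spaces $L(2g+5,2)$, $L(11g+3,11)$, and $L(13g+8,13)$. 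As a consistency check, and to confirm that the four fillings are genuinely distinct, I would compare first homology: Proposition~\ref{prop:splicing-facts} gives $|H_1(Y_g)| = |2\cdot 3\cdot 2\cdot(2g+1) - 1| = 24g+11$, while the three lens spaces have orders $2g+5$, $11g+3$, and $13g+8$. For every $g \geq 1$ these satisfy $2g+5 < 11g+3 < 13g+8 < 24g+11$, so the four filled manifolds are pairwise non-homeomorphic and hence correspond to four distinct slopes on $M_g$.

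I expect the main obstacle to be the explicit tangle-replacement computation: one must extract the precise alternating diagram of $L_g$ from \cite[Theorem~6.5]{zentner-simple}, choose the crossing $c$ so that the three relevant replacements each collapse $L_g(r)$ to a two-bridge knot or link, and then track the continued-fraction data carefully enough to pin down the fractions above. The bookkeeping is delicate because the parameter $g$ enters through the torus knot $T_{2,2g+1}$, so the diagram—and with it each branched double cover—varies linearly in $g$; verifying that the two-bridge fractions depend on $g$ with exactly the stated numerators and denominators is where the real content lies. Everything else—the Montesinos trick, the $SU(2)$-cyclicity of lens spaces, and the $SU(2)$-cyclicity of $Y_g$—is either standard or already established in the excerpt.
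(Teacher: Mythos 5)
Your proposal is correct and follows essentially the same route as the paper: realize $Y_g$ as $\dcover(L_g)$ via \cite[Theorem~6.5]{zentner-simple}, define $M_g = Y_g \setminus N(K_g)$ where $K_g$ is the lift of a chosen crossing ball, and use the Montesinos trick to identify three tangle replacements producing 2-bridge links with fractions $\tfrac{2g+5}{2}$, $\tfrac{11g+3}{11}$, $\tfrac{13g+8}{13}$ — exactly the computation the paper carries out pictorially in Figures~\ref{fig:Lg-1}--\ref{fig:Lg-3}. Your homology comparison ($24g+11$ versus the three lens space orders) to confirm the four slopes are distinct is a small but worthwhile addition that the paper leaves implicit.
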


\begin{proof}
We consider the knot $L_g = L(T_{2,3},T_{2,2g+1})$ from \cite[\S 6]{zentner-simple} whose branched double cover is $Y_g = Y(T_{2,3},T_{2,2g+1})$.  It is illustrated in Figure~\ref{fig:knot-Lg}.
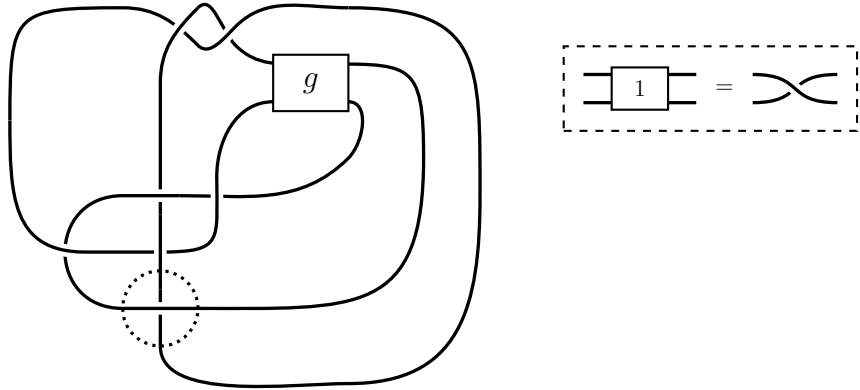
\begin{figure}
\begin{tikzpicture}
\tikzset{twistregion/.style={draw, fill=white, thick, minimum width=1cm, minimum height=0.75cm}}
\draw[link] (0,-2) to[out=90, in=270] (0,1.5) to[out=90,in=225,looseness=1] (0.5,2.5) to [out=45,in=135,looseness=1] (1,2) to[out=315,in=180,looseness=1] (2,1.75);
\draw[link] (2,1.75) to[out=0,in=90] (3.5,0.5) to [out=270,in=0,looseness=1.5] (0.5,-1.5) to[out=180,in=0] (-0.5,-1.5) to[out=180,in=180] (-0.5,0) to[out=0,in=180] (0.25,0);
\draw[link] (0.25,0) to[out=0, in=225,looseness=1] (2.5,0.5) to[out=45,in=0,looseness=1] (2.5,1.25) to[out=180,in=0] (1.5,1.25);
\draw[link] (1.5,1.25) to[out=180, in=90,looseness=1] (0.75,0.25) to[out=270,in=0] (0,-0.75) to[out=180,in=0] (-1,-0.75) to [out=180,in=270,looseness=1.25] (-2,1);
\draw[link] (-2,1) to[out=90,in=180] (-0.5,2.5) to[out=0,in=135,looseness=1] (0.5,2) to[out=315,in=225,looseness=1] (1,2.25) to[out=45,in=180,looseness=1] (2.5,2.5);
\draw[link] (2.5,2.5) to[out=0,in=90,looseness=1.5] (4.25,0) to[out=270,in=0,looseness=1.25] (2.5,-2.5) to[out=180,in=270,looseness=0.75] (0,-2);
\node[twistregion] at (2,1.5) {$g$};
\draw[link] (0,-1.25) to[out=90,in=270] (0,-0.25);
\draw[link] (0,1.5) to[out=90,in=225,looseness=1] (0.5,2.5);
\draw[very thick,dotted] (0,-1.5) circle (0.5);

\begin{scope}[scale=0.75, xshift=-0.5cm, yshift=1.9cm, transform shape]
\draw [dashed] (7.65,0.75) rectangle (12.85, -0.75);
\draw[link] (8,0.25) -- (10,0.25) (8,-0.25) -- (10,-0.25);
\node [twistregion] at (9,0) {$1$};
\node at (10.5,0) {$=$};
\draw[link] (11,-0.25) to[out=0,in=180,looseness=1.5] (12.5,0.25);
\draw[link] (11,0.25) to[out=0,in=180,looseness=1.5] (12.5,-0.25);
\end{scope}

\end{tikzpicture}
\caption{The knot $L_g = L(T_{2,3},T_{2,2g+1})$.} \label{fig:knot-Lg}
\end{figure}

We claim that the circled crossing in Figure~\ref{fig:knot-Lg} can be replaced with any of three rational tangles
\[ \begin{tikzpicture}
\draw[link] (0,-0.5) -- (0,0.5);
\draw[link] (-0.5,0) -- (0.5,0);
\draw[very thick, dotted] (0,0) circle (0.5);
\node at (1.25,0) {\Huge $\leadsto$};
\draw[link] (2,0) -- (3,0);
\draw[link] (2.5,-0.5) -- (2.5,0.5);
\draw[very thick, dotted] (2.5,0) circle (0.5);
\node at (3.25,-0.25) {\large ,};
\draw[link] (3.5,0) to[out=0,in=270] (4,0.5);
\draw[link] (4,-0.5) to[out=90,in=180] (4.5,0);
\draw[very thick, dotted] (4,0) circle (0.5);
\node at (4.75,-0.25) {\large ,};
\draw[link] (5,0) to[out=0,in=90] (5.5,-0.5);
\draw[link] (5.5,0.5) to[out=270,in=180] (6,0);
\draw[very thick, dotted] (5.5,0) circle (0.5);
\end{tikzpicture} \]
such that each tangle replacement converts $L_g$ into a 2-bridge link.  Each of these corresponds to a Dehn surgery on the same knot $K_g$ in $Y_g = \Sigma_2(L_g)$, a neighborhood of which is the preimage of a small ball containing the original crossing.  The Dehn surgery turns $Y_g$ into the branched double cover of the corresponding 2-bridge link, which is a lens space.  These tangle replacements are carried out in Figures~\ref{fig:Lg-1}, \ref{fig:Lg-2}, and \ref{fig:Lg-3}, along with some isotopies to make it visible that they are indeed 2-bridge links.

By putting the first of these 2-bridge links in normal form, we see that it corresponds to the continued fraction
\[ [g+2,2] = (g+2) + \frac{1}{2} = \frac{2g+5}{2}; \]
similarly, the second and third have continued fractions
\[ [g+1,-2,1,2,-2,-1] = \frac{11g+3}{11} \quad\mathrm{and}\quad [g,1,1,1,1,2] = \frac{13g+8}{13}. \]
Their branched double covers are thus the lens spaces $L(2g+5,2)$, $L(11g+3,11)$, and $L(13g+8,13)$ respectively.
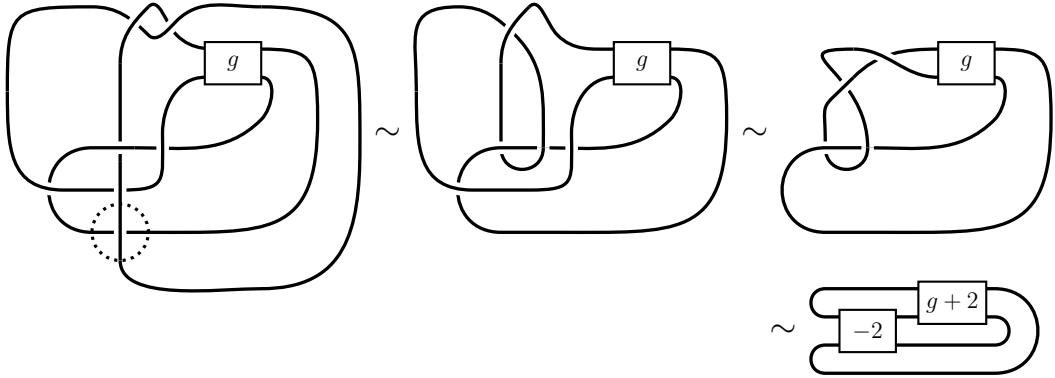
\begin{figure}
\begin{tikzpicture}[scale=0.75, transform shape]
\tikzset{twistregion/.style={draw, fill=white, thick, minimum width=1cm, minimum height=0.75cm}}
\begin{scope}
\draw[link] (0,-2) to[out=90, in=270] (0,1.5) to[out=90,in=225,looseness=1] (0.5,2.5) to [out=45,in=135,looseness=1] (1,2) to[out=315,in=180,looseness=1] (2,1.75);
\draw[link] (2,1.75) to[out=0,in=90] (3.5,0.5) to [out=270,in=0,looseness=1.5] (0.5,-1.5) to[out=180,in=0] (-0.5,-1.5) to[out=180,in=180] (-0.5,0) to[out=0,in=180] (0.25,0);
\draw[link] (0.25,0) to[out=0, in=225,looseness=1] (2.5,0.5) to[out=45,in=0,looseness=1] (2.5,1.25) to[out=180,in=0] (1.5,1.25);
\draw[link] (1.5,1.25) to[out=180, in=90,looseness=1] (0.75,0.25) to[out=270,in=0] (0,-0.75) to[out=180,in=0] (-1,-0.75) to [out=180,in=270,looseness=1.25] (-2,1);
\draw[link] (-2,1) to[out=90,in=180] (-0.5,2.5) to[out=0,in=135,looseness=1] (0.5,2) to[out=315,in=225,looseness=1] (1,2.25) to[out=45,in=180,looseness=1] (2.5,2.5);
\draw[link] (2.5,2.5) to[out=0,in=90,looseness=1.5] (4.25,0) to[out=270,in=0,looseness=1.25] (2.5,-2.5) to[out=180,in=270,looseness=0.75] (0,-2);
\node[twistregion] at (2,1.5) {$g$};
\draw[link] (0,-1.25) to[out=90,in=270] (0,-0.25);
\draw[link] (0,1.5) to[out=90,in=225,looseness=1] (0.5,2.5);
\draw[link] (0,-2) to[out=90,in=270] (0,-1);
\draw[very thick,dotted] (0,-1.5) circle (0.5);
\end{scope}
\node at (4.74,0.25) {\Large $\sim$};
\begin{scope}[xshift=7.25cm]
\draw[link] (-0.5,1.5) to[out=90,in=225,looseness=1] (0,2.5) to [out=45,in=135,looseness=1] (0.5,2) to[out=315,in=180,looseness=1] (1.5,1.75);
\draw[link] (2,1.75) to[out=0,in=90] (3.5,0.5) to [out=270,in=0,looseness=1.5] (0.5,-1.5) to[out=180,in=0] (-0.5,-1.5) to[out=180,in=180] (-0.5,0) to[out=0,in=180] (0.25,0);
\draw[link] (0.25,0) to[out=0, in=225,looseness=1] (2.5,0.5) to[out=45,in=0,looseness=1] (2.5,1.25) to[out=180,in=0] (1.5,1.25);
\draw[link] (1.5,1.25) to[out=180, in=90,looseness=1] (0.75,0.25) to[out=270,in=0] (0,-0.75) to[out=180,in=0] (-1,-0.75) to [out=180,in=270,looseness=1.25] (-2,1);
\draw[link] (-2,1) to[out=90,in=180,looseness=1.5] (-1.25,2.5) to[out=0,in=135,looseness=1] (-0.25,2) to[out=315,in=90,looseness=1] (0.25,0);
\draw[link] (0.25,0) to[out=270,in=270,looseness=1.75] (-0.5,0) to[out=90,in=270] (-0.5,1.5);
\node[twistregion] at (2,1.5) {$g$};
\draw[link] (-0.5,1.5) to[out=90,in=225,looseness=1] (0,2.5);
\begin{scope} \clip (-0.75,-0.25) rectangle (-0.25,0.25);
\draw[link] (-0.5,-1.5) to[out=180,in=180] (-0.5,0) to[out=0,in=180] (0.25,0);
\end{scope}
\end{scope}
\node at (11.25,0.25) {\Large $\sim$};
\begin{scope}[xshift=13cm]
\draw[link] (1.5,1.75) to[out=180,in=45,looseness=1] (0,1.25);
(-0.5,1.5) to[out=90,in=225,looseness=1] (0,2.5) to [out=45,in=135,looseness=1] (0.5,2) to[out=315,in=180,looseness=1] (1.5,1.75);
\draw[link] (2,1.75) to[out=0,in=90] (3.5,0.5) to [out=270,in=0,looseness=1.5] (0.5,-1.5) to[out=180,in=0] (-0.5,-1.5) to[out=180,in=180] (-0.5,0) to[out=0,in=180] (0.25,0);
\draw[link] (0.25,0) to[out=0, in=225,looseness=1] (2.5,0.5) to[out=45,in=0,looseness=1] (2.5,1.25) to[out=180,in=0] (1.5,1.25);
\draw[link] (1.5,1.25) to[out=180,in=0,looseness=1] (0,1.75);
\draw[link] (0.25,0) to[out=90,in=180,looseness=2] (0,1.75);
\draw[link] (0.25,0) to[out=270,in=270,looseness=1.75] (-0.5,0) to[out=90,in=225] (0,1.25);
\node[twistregion] at (2,1.5) {$g$};
\begin{scope} \clip (-0.75,-0.25) rectangle (-0.25,0.25);
\draw[link] (-0.5,-1.5) to[out=180,in=180] (-0.5,0) to[out=0,in=180] (0.25,0);
\end{scope}
\end{scope}

\node at (11.75,-3.25) {\Large $\sim$};
\begin{scope}[xshift=12.5cm, yshift=-4cm]
\tikzset{twistregion/.style={draw, fill=white, thick, minimum width=1cm, minimum height=0.75cm}}
\draw[link] (0,1.5) to[out=180,in=180] (0,1) to[out=0,in=180] (3,1) to[out=0,in=0] (3,0.5) to[out=180,in=0] (0,0.5) to[out=180,in=180] (0,0) to[out=0,in=180] (3,0) to[out=0,in=0] (3,1.5) -- cycle;
\node[twistregion] at (0.75,0.75) {$-2$};
\node[twistregion] at (2.25,1.25) {$g+2$};
\end{scope}
\end{tikzpicture}
\caption{The first tangle replacement on $L_g$, followed by an isotopy.} \label{fig:Lg-1}
\end{figure}
\begin{figure}
\begin{tikzpicture}[scale=0.75, transform shape]
\tikzset{twistregion/.style={draw, fill=white, thick, minimum width=1cm, minimum height=0.75cm}}
\clip (-2.25,2.8) rectangle (18.25,-4.3);
\begin{scope}
\draw[link] (0,-2) to[out=90, in=270] (0,1.5) to[out=90,in=225,looseness=1] (0.5,2.5) to [out=45,in=135,looseness=1] (1,2) to[out=315,in=180,looseness=1] (2,1.75);
\draw[link] (2,1.75) to[out=0,in=90] (3.5,0.5) to [out=270,in=0,looseness=1.5] (0.5,-1.5) to[out=180,in=0] (-0.5,-1.5) to[out=180,in=180] (-0.5,0) to[out=0,in=180] (0.25,0);
\draw[link] (0.25,0) to[out=0, in=225,looseness=1] (2.5,0.5) to[out=45,in=0,looseness=1] (2.5,1.25) to[out=180,in=0] (1.5,1.25);
\draw[link] (1.5,1.25) to[out=180, in=90,looseness=1] (0.75,0.25) to[out=270,in=0] (0,-0.75) to[out=180,in=0] (-1,-0.75) to [out=180,in=270,looseness=1.25] (-2,1);
\draw[link] (-2,1) to[out=90,in=180] (-0.5,2.5) to[out=0,in=135,looseness=1] (0.5,2) to[out=315,in=225,looseness=1] (1,2.25) to[out=45,in=180,looseness=1] (2.5,2.5);
\draw[link] (2.5,2.5) to[out=0,in=90,looseness=1.5] (4.25,0) to[out=270,in=0,looseness=1.25] (2.5,-2.5) to[out=180,in=270,looseness=0.75] (0,-2);
\node[twistregion] at (2,1.5) {$g$};
\draw[link] (0,-1.25) to[out=90,in=270] (0,-0.25);
\draw[link] (0,1.5) to[out=90,in=225,looseness=1] (0.5,2.5);
\draw[link] (0,-2) to[out=90,in=270] (0,-1);
\draw[very thick,dotted,fill=white] (0,-1.5) circle (0.5);
\draw[link] (-0.5,-1.5) to[out=0,in=270] (0,-1);
\draw[link] (0,-2) to[out=90,in=180] (0.5,-1.5);
\draw[very thick,dotted] (0,-1.5) circle (0.5);
\end{scope}
\node at (4.74,0.25) {\Large $\sim$};
\begin{scope}[xshift=7.25cm]
\draw[link] (0,-1) to[out=90, in=270] (0,1.5) to[out=90,in=225,looseness=1] (0.5,2.5) to [out=45,in=135,looseness=1] (1,2) to[out=315,in=180,looseness=1] (2,1.75);
\begin{scope} \clip (-0.25,-2)  -- (-0.25,-1) -- (0.25,-1) -- (0.25,0.25) -- (-1.5,0.25) -- (-1.5,-2) -- cycle;
\draw[link] (0.5,-1.5) to[out=180,in=0] (-0.5,-1.5) to[out=180,in=180] (-0.5,0) to[out=0,in=180] (0.25,0);
\end{scope}
\draw[link] (2.5,1.25) to[out=0,in=180,looseness=1] (3.5,1.75) to[out=0,in=0] (3.5,0) to[out=180,in=0] (0.25,0);
\draw[link] (1.5,1.25) to[out=180, in=90,looseness=1] (0.75,0.25) to[out=270,in=0] (0,-0.75) to[out=180,in=0] (-1,-0.75) to [out=180,in=270,looseness=1.25] (-2,1);
\draw[link] (2.5,1.75) to[out=0,in=180,looseness=1] (3.5,1.25) to[out=0,in=0] (3.5,0.5) to[out=180,in=0] (2,0.5) to[out=180,in=315,looseness=1] (0.5,2) to[out=135,in=0,looseness=1] (-0.5,2.5) to[out=180,in=90] (-2,1);
\node[twistregion] at (2,1.5) {$g$};
\draw[link] (0,-1.25) to[out=90,in=270] (0,-0.25);
\draw[link] (0,1.5) to[out=90,in=225,looseness=1] (0.5,2.5);
\draw[link] (-0.5,-1.5) to[out=0,in=270] (0,-1);
\end{scope}
\node at (12.25,0.25) {\Large $\sim$};
\begin{scope}[xshift=13.5cm]
\draw[link] (0,-1) to[out=90, in=270] (0,0.75) to[out=90,in=180,looseness=1] (1,1.75) to[out=0,in=180] (2,1.75);
\begin{scope} \clip (-0.25,-2)  -- (-0.25,-1) -- (0.25,-1) -- (0.25,0.25) -- (-1.5,0.25) -- (-1.5,-2) -- cycle;
\draw[link] (0.5,-1.5) to[out=180,in=0] (-0.5,-1.5) to[out=180,in=180] (-0.5,0) to[out=0,in=180] (0.25,0);
\end{scope}
\draw[link] (0,-2) to[out=0,in=315] (0.75,0.75) to[out=135,in=135,looseness=3] (1.1,1.1) to[out=315,in=180,looseness=1.5] (2,0.5) to[out=0,in=180] (3.5,0.5) to[out=0,in=0] (3.5,1.25) to[out=180,in=0,looseness=1] (2.5,1.75);
\draw[link] (2.5,1.25) to[out=0,in=180,looseness=1] (3.5,1.75) to[out=0,in=0] (3.5,0) to[out=180,in=0] (0.25,0);
\draw[link] (1.5,1.25) to[out=180, in=90,looseness=1] (0.75,0.25) to[out=270,in=0] (0,-0.75) to[out=180,in=180] (0,-2);
\draw[link] (0,-1.25) to[out=90,in=270] (0,-0.25);
\draw[link] (3.5,1.25) to[out=180,in=0,looseness=1] (2.5,1.75);
\begin{scope} \clip (0.85,0.85) rectangle (1.35,1.35);
\draw[link] (0.75,0.75) to[out=135,in=135,looseness=3] (1.1,1.1) to[out=315,in=180,looseness=1.5] (2,0.5);
\end{scope}
\node[twistregion] at (2,1.5) {$g$};
\draw[link] (-0.5,-1.5) to[out=0,in=270] (0,-1);
\end{scope}
\node at (7,-3.25) {\Large $\sim$};
\begin{scope}[xshift=7.75cm,yshift=-4cm]
\tikzset{twistregion/.style={draw, fill=white, thick, minimum width=1cm, minimum height=0.75cm}}
\draw[link] (0,1.5) to[out=180,in=180] (0,1) to[out=0,in=180] (9,1) to[out=0,in=0] (9,0.5) to[out=180,in=0] (0,0.5) to[out=180,in=180] (0,0) to[out=0,in=180] (9,0) to[out=0,in=0] (9,1.5) -- cycle;
\node[twistregion] at (0.75,0.75) {$1$};
\node[twistregion] at (2.25,0.25) {$-2$};
\node[twistregion] at (3.75,0.75) {$-2$};
\node[twistregion] at (5.25,0.25) {$1$};
\node[twistregion] at (6.75,0.75) {$2$};
\node[twistregion] at (8.25,1.25) {$g+1$};
\end{scope}
\end{tikzpicture}
\caption{The second tangle replacement on $L_g$, followed by an isotopy.} \label{fig:Lg-2}
\end{figure}
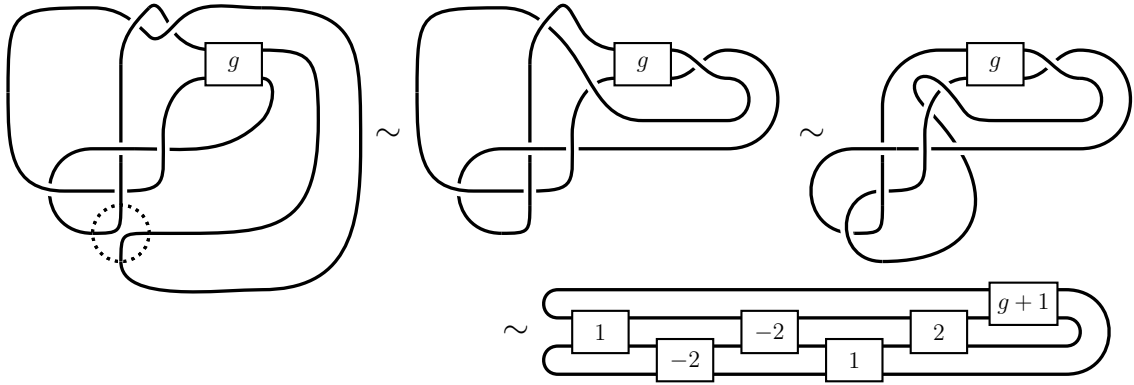
\begin{figure}
\begin{tikzpicture}[scale=0.75, transform shape]
\tikzset{twistregion/.style={draw, fill=white, thick, minimum width=1cm, minimum height=0.75cm}}
\begin{scope}
\draw[link] (0,-2) to[out=90, in=270] (0,1.5) to[out=90,in=225,looseness=1] (0.5,2.5) to [out=45,in=135,looseness=1] (1,2) to[out=315,in=180,looseness=1] (2,1.75);
\draw[link] (2,1.75) to[out=0,in=90] (3.5,0.5) to [out=270,in=0,looseness=1.5] (0.5,-1.5) to[out=180,in=0] (-0.5,-1.5) to[out=180,in=180] (-0.5,0) to[out=0,in=180] (0.25,0);
\draw[link] (0.25,0) to[out=0, in=225,looseness=1] (2.5,0.5) to[out=45,in=0,looseness=1] (2.5,1.25) to[out=180,in=0] (1.5,1.25);
\draw[link] (1.5,1.25) to[out=180, in=90,looseness=1] (0.75,0.25) to[out=270,in=0] (0,-0.75) to[out=180,in=0] (-1,-0.75) to [out=180,in=270,looseness=1.25] (-2,1);
\draw[link] (-2,1) to[out=90,in=180] (-0.5,2.5) to[out=0,in=135,looseness=1] (0.5,2) to[out=315,in=225,looseness=1] (1,2.25) to[out=45,in=180,looseness=1] (2.5,2.5);
\draw[link] (2.5,2.5) to[out=0,in=90,looseness=1.5] (4.25,0) to[out=270,in=0,looseness=1.25] (2.5,-2.5) to[out=180,in=270,looseness=0.75] (0,-2);
\node[twistregion] at (2,1.5) {$g$};
\draw[link] (0,-1.25) to[out=90,in=270] (0,-0.25);
\draw[link] (0,1.5) to[out=90,in=225,looseness=1] (0.5,2.5);
\draw[link] (0,-2) to[out=90,in=270] (0,-1);
\draw[very thick,dotted,fill=white] (0,-1.5) circle (0.5);
\draw[link] (-0.5,-1.5) to[out=0,in=90] (0,-2);
\draw[link] (0,-1) to[out=270,in=180] (0.5,-1.5);
\draw[very thick,dotted] (0,-1.5) circle (0.5);
\end{scope}
\node at (4.74,0.25) {\Large $\sim$};
\begin{scope}[xshift=7.5cm]
\draw[link] (0,-1) to[out=90, in=270] (0,1.5) to[out=90,in=225,looseness=1] (0.5,2.5) to [out=45,in=135,looseness=1] (1,2) to[out=315,in=180,looseness=1] (2,1.75);
\draw[link] (2,1.75) to[out=0,in=90] (3.5,0.5) to [out=270,in=0,looseness=1.5] (0.5,-1.5) to[out=180,in=0] (0.5,-1.5) to[out=180,in=270] (0,-1);
\draw[link] (0.25,0) to[out=0, in=225,looseness=1] (2.5,0.5) to[out=45,in=0,looseness=1] (2.5,1.25) to[out=180,in=0] (1.5,1.25);
\draw[link] (1.5,1.25) to[out=180, in=90,looseness=1] (0.75,0.25) to[out=270,in=0] (0,-0.75) to[out=180,in=0] (-1,-0.75) to [out=180,in=270,looseness=1.25] (-2,1);
\draw[link] (-2,1) to[out=90,in=180] (-0.5,2.5) to[out=0,in=135,looseness=1] (0.5,2) to[out=315,in=225,looseness=1] (1,2.25) to[out=45,in=0,looseness=1] (0.5,3);
\draw[link] (0.5,3) to[out=180,in=90] (-2.5,1) to[out=270,in=180,looseness=1] (-1.5,0) to[out=0,in=180,looseness=1] (-0.5,0) to[out=0,in=180] (0.25,0);
\node[twistregion] at (2,1.5) {$g$};
\draw[link] (0,-1.25) to[out=90,in=270] (0,-0.25);
\draw[link] (0,1.5) to[out=90,in=225,looseness=1] (0.5,2.5);
\draw[link] (-1,-0.75) to [out=180,in=270,looseness=1.25] (-2,1);
\end{scope}
\node at (11.45,0.25) {\Large $\sim$};
\begin{scope}[xshift=14.25cm]
\draw[link] (0,-1) to[out=90, in=270] (0,0.75) to[out=90,in=0,looseness=1] (-1.25,1.25) to[out=180,in=180,looseness=2] (-1.25,1.75) to[out=0,in=180] (1.5,1.75);
\draw[link] (2,1.75) to[out=0,in=90] (3.5,0.5) to [out=270,in=0,looseness=1.5] (0.5,-1.5) to[out=180,in=0] (0.5,-1.5) to[out=180,in=270] (0,-1);
\draw[link] (0.25,0) to[out=0, in=225,looseness=1] (2.5,0.5) to[out=45,in=0,looseness=1] (2.5,1.25) to[out=180,in=0] (1.5,1.25);
\draw[link] (1.5,1.25) to[out=180, in=90,looseness=1] (0.75,0.25) to[out=270,in=0] (0,-0.75) to[out=180,in=0] (0,-0.75) to [out=180,in=270,looseness=1.25] (-1,1);
\draw[link] (-1,1) to[out=90,in=0,looseness=1] (-1.75,2.25) to[out=180,in=90,looseness=1] (-2.5,1) to[out=270,in=180,looseness=1] (-1.5,0) to[out=0,in=180,looseness=1] (-0.5,0) to[out=0,in=180] (0.25,0);
\node[twistregion] at (2,1.5) {$g$};
\draw[link] (0,-1.25) to[out=90,in=270] (0,-0.25);
\begin{scope} \clip (-1.25,-0.5) rectangle (-0.25,0.5);
\draw[link] (0,-0.75) to [out=180,in=270,looseness=1.25] (-1,1);
\end{scope}
\draw[link] (0,0.75) to[out=90,in=0,looseness=1] (-1.25,1.25);
\end{scope}

\node at (7,-3.25) {\Large $\sim$};
\begin{scope}[xshift=7.75cm,yshift=-4cm]
\tikzset{twistregion/.style={draw, fill=white, thick, minimum width=1cm, minimum height=0.75cm}}
\draw[link] (0,1.5) to[out=180,in=180] (0,1) to[out=0,in=180] (9,1) to[out=0,in=0] (9,0.5) to[out=180,in=0] (0,0.5) to[out=180,in=180] (0,0) to[out=0,in=180] (9,0) to[out=0,in=0] (9,1.5) -- cycle;
\node[twistregion] at (0.75,0.75) {$-2$};
\node[twistregion] at (2.25,0.25) {$1$};
\node[twistregion] at (3.75,0.75) {$-1$};
\node[twistregion] at (5.25,0.25) {$1$};
\node[twistregion] at (6.75,0.75) {$-1$};
\node[twistregion] at (8.25,1.25) {$g$};
\end{scope}
\end{tikzpicture}
\caption{The third tangle replacement on $L_g$, followed by an isotopy.} \label{fig:Lg-3}
\end{figure}
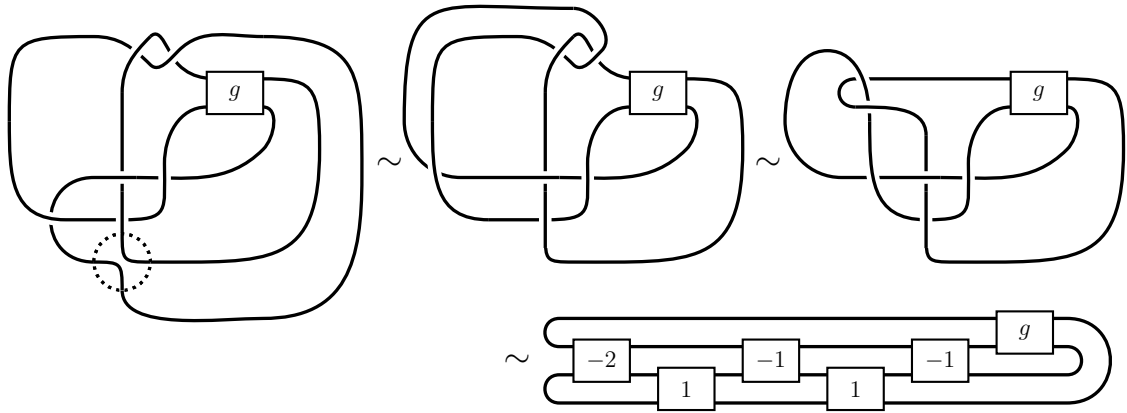
We can therefore take $M_g = Y_g \setminus K_g$.
\end{proof}

In the sequel, we will let $r_T$, $r_2$, $r_{11}$, and $r_{13}$ be the slopes on $\partial M_g$ with Dehn fillings $Y_g$, $L(2g+5,2)$, $L(11g+3,11)$, and $L(13g+8,13)$ respectively.  By construction the distance between $r_T$ and $r_2$ is $2$.

\begin{remark}
The construction of $M_g$ was inspired by looking through \cite{dunfield-exceptional}.  Based on this, we believe, but will not prove, that the first few $M_g$ are the following cusped hyperbolic manifolds:
\[ \def\arraystretch{1.25}
\begin{array}{c|cccccc}
g & 1 & 2 & 3 & 4 & 5 & 6 \\
\hline
M_g & m017 & m119 & m164 & s091 & v0172 & t00333 \\
M_{-g-1} & m016 & m118 & m163 & s092 & v0173 & t00332
\end{array} \]
The manifolds $M_{-g-1}$ in the second row of this table are constructed in the same way as the $M_g$, but with a $(-g-1)$-twist region in place of the $g$-twist region; they have three lens space fillings
\[ -L(2g-3,2), \qquad -L(11g+8,11), \qquad -L(13g+5,13) \]
of slopes $r_2$, $r_{11}$, and $r_{13}$ by exactly the same argument, and the filling $M_{-g-1}(r_T)$ appears to be $\pm Y(T_{2,3},T_{2,-(2g+1)})$.  We note that for $g=1,2$ the lens space filling $M_{-g-1}(r_2)$ is actually $S^3$, and indeed $M_{-2} = m016$ and $M_{-3} = m118$ are the complements in $S^3$ of $P(-2,3,7)$ and the twisted torus knot $k4_4$, which previously appeared in Example~\ref{ex:22n0-surgeries}.
\end{remark}

Most of the remainder of this section will be devoted to proving the hyperbolicity of the $M_g$.

\begin{theorem} \label{thm:M_g-hyperbolic}
The manifolds $M_g$, which have four $SU(2)$-cyclic Dehn fillings by Theorem~\ref{thm:four-fillings}, are hyperbolic for all $g \geq 1$.
\end{theorem}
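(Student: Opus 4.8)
The plan is to verify that each $M_g$ satisfies Thurston's criterion for a compact orientable irreducible $3$-manifold with toral boundary to have finite-volume hyperbolic interior: it must have incompressible boundary and contain no essential sphere, torus, or annulus while not being Seifert fibered. Since the family is infinite, I would not certify each $M_g$ numerically one at a time; instead I would realize all of the $M_g$ simultaneously as Dehn fillings of a single cusped manifold and combine Thurston's hyperbolic Dehn surgery theorem with a finite computation.

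\emph{Reformulating the $g$-dependence as twisting.} Let $A$ be the crossing circle bounding the disk that the two strands of the $g$-twist region of $L_g$ pass through, so that $L_g$ is obtained from the fixed link $L_0 \cup A$ by $(-1/g)$-surgery on $A$ (adding $g$ full twists). Since $A$ is disjoint from $L_g$, it lifts to the branched double cover $\dcover(L_0)$ as a one- or two-component link $\tilde A$ according to the parity of its linking number with $L_g$, and $Y_g = \dcover(L_g)$ is obtained from $\dcover(L_0)$ by Dehn surgery on $\tilde A$ with slopes determined by $g$. Drilling out the Montesinos-trick knot $K_g$ throughout, I obtain a fixed cusped manifold
\[ W = \dcover(L_0) \setminus \big( N(K_0) \cup N(\tilde A) \big), \]
so that $M_g$ is the filling of $W$ along the $\tilde A$-cusp(s) by the slope(s) corresponding to $(-1/g)$-twisting, with the $K_0$-cusp left open. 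As $g$ ranges over the positive integers these are infinitely many distinct filling slopes approaching a single limiting slope on $\partial N(\tilde A)$.

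\emph{Hyperbolicity of $W$ and the finite check.} I would exhibit a hyperbolic structure on $W$ using SnapPy \cite{snappy} and certify it rigorously (or identify $W$ with a census manifold and confirm the filling description against \cite{dunfield-exceptional}). Given hyperbolicity of $W$, Thurston's hyperbolic Dehn surgery theorem shows that all but finitely many of the fillings $M_g$ are hyperbolic, and one can bound the exceptional slopes a priori; the finitely many small values of $g$ are then handled by directly verifying that $M_g$ is irreducible, atoroidal, and anannular.

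The main obstacle, and the step requiring the most care, is bridging from ``all but finitely many $g$'' to the uniform statement ``all $g \geq 1$.'' The finite check is delicate because one must pin down exactly which slopes could be exceptional, and because the two natural decompositions of $M_g$ --- along the image of the JSJ torus $T$ from Proposition~\ref{prop:splicing-facts}, and along essential annuli coming from the Seifert structures on $E_{2,3}$ and $E_{2,2g+1}$ --- are precisely the features a toroidal or annular $M_g$ would exploit. The crux is therefore to show that drilling $K_g$ destroys $T$ (so that $K_g$ meets $T$ essentially) while introducing no new essential torus or annulus. If a fully uniform argument is preferred over the finite check, I would instead prove directly that $M_g$ is simple by a normal-surface and JSJ analysis: any essential torus or annulus could be isotoped to meet $T$ and the two torus-knot exteriors in incompressible, boundary-incompressible pieces, which the boundary-slope constraints recorded in the proof of Proposition~\ref{prop:splicing-facts} rule out.
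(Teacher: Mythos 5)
Your strategy is genuinely different from the paper's: you would realize all the $M_g$ at once as Dehn fillings of a single cusped manifold $W$ (untwisting the $g$-twist region along a crossing circle and lifting to the branched double cover), then combine certified hyperbolicity of $W$ with Thurston's hyperbolic Dehn surgery theorem and a finite check. The reduction itself is sound, since the circled crossing is disjoint from the twist box, so the Montesinos knot $K_g$ is the image of a fixed knot $K_0$ disjoint from the lifted crossing circle. The paper, by contrast, gives a uniform, purely topological and computation-free proof: irreducibility and non-Seifert-fiberedness follow from the three lens space fillings (Lemma~\ref{lem:mg-irreducible}); atoroidality follows from a cable-space analysis (Proposition~\ref{prop:not-cabled}) combined with \cite[Theorem~2.0.1]{cgls}, Gordon's classification of fillings of cable spaces, Scharlemann's theorem on reducible surgeries, and the arithmetic Lemma~\ref{lem:11-13-torus-surgery}; essential disks and annuli are then excluded by a formal argument from these properties.

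As written, however, your proposal has genuine gaps, and they sit exactly at the load-bearing steps. First, the hyperbolicity of $W$ is never established; it is deferred to an unperformed SnapPy certification, and without it the whole scheme collapses. Second, the passage from ``all but finitely many $g$'' to ``all $g \geq 1$'' is also deferred twice over: the a priori bound on exceptional slopes requires verified cusp geometry of $W$ (a 6-theorem or Hodgson--Kerckhoff computation you do not carry out), and the remaining finite check is described as ``directly verifying that $M_g$ is irreducible, atoroidal, and anannular'' --- which is precisely the content of the theorem for those $g$; the paper's entire proof is what such a verification looks like, so nothing has been saved. Third, your fallback ``uniform'' argument is flawed: the JSJ torus $T$ of $Y_g$ does not lie in $M_g$ at all. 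Indeed $K_g$ must intersect $T$, for otherwise $T$ would be incompressible in $M_g$ (any compressing disk in $M_g \subset Y_g$ would compress it in $Y_g$) and could not be boundary-parallel (else one of $E_{2,3}$, $E_{2,2g+1}$ would be a solid torus), contradicting the very atoroidality you are trying to prove. Consequently $M_g$ is cut by $T \cap M_g$ into torus-knot exteriors with arcs of $K_g$ removed, not torus-knot exteriors, and the boundary-slope constraints recorded in the proof of Proposition~\ref{prop:splicing-facts} (Tsau's theorem on annuli and Seifert surfaces, applied there to closed surfaces in $Y_g$) do not apply to those pieces. Bridging exactly this gap is what the paper's cable-space/CGLS/Scharlemann argument accomplishes, and nothing in your sketch replaces it.
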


\begin{proof}
By Thurston's hyperbolization theorem for Haken manifolds \cite{thurston} it suffices to show that $M_g$ has no essential spheres, disks, annuli, or tori.  We will prove the following:
\begin{itemize}
\item $M_g$ is irreducible (Lemma~\ref{lem:mg-irreducible});
\item $M_g$ is not Seifert fibered (also Lemma~\ref{lem:mg-irreducible});
\item All incompressible tori in $M_g$ are boundary-parallel (Proposition~\ref{prop:M_g-atoroidal}).
\end{itemize}
Assuming these for now, it remains to be shown that $M_g$ has no essential disks or annuli.  The argument below is entirely formal, using only the above properties of $M_g$ and the fact that it is orientable with boundary a single torus.

We first observe that any torus $T \subset M_g$ which admits a compressing disk $D$ must bound a solid torus $S^1\times D^2 \subset M_g$.  Indeed, a neighborhood of $T\cup D$ has in its boundary a 2-sphere, which by irreducibility bounds a ball $B$.  If $T$ lies inside $B$, then it bounds an $S^1\times D^2$ inside $B$.  Otherwise a neighborhood of $B \cup D$ is a solid torus with boundary $T$.

Now if $M_g$ contains an essential disk $D$, then the above argument says that $\partial M_g$ bounds a solid torus, hence $M_g \cong S^1 \times D^2$.  But then $M_g$ would be Seifert fibered, so no such disk can exist.

Next, we suppose that $M_g$ contains an essential annulus $A$; the pair of curves $\partial A$ must be parallel on the torus $\partial M_g$, which they divide into a pair of annuli $A_1$ and $A_2$.  If the two components of $\partial A$ have the same orientation on $\partial M_g$, then each $A \cup A_i$ is a Klein bottle $K_i$.  Pushing $K_1$ slightly into the interior of $M_g$, it has a tubular neighborhood $N_1$ which (by the orientability of $M_g$) is the twisted $I$-bundle over $K_1$, and whose boundary is a torus $T_1$.  Since $M_g$ is not Seifert fibered it cannot be homeomorphic to $N_1$, so $T_1$ is not boundary-parallel, hence it is compressible.  We conclude as above that $T_1$ bounds a solid torus, but this is impossible since it bounds $N_1 \not\cong S^1\times D^2$ on one side and contains $\partial M_g$ on the other.

We have shown that the two components of $\partial A$ must be oppositely oriented on $\partial M_g$, so we form two tori $T_i = A \cup A_i$ for $i=1,2$.  The $T_i$ cannot be boundary-parallel, since then $A$ can be isotoped into $\partial M_g$, so they are compressible and hence bound solid tori.  We now see that $A$ separates $M_g$, since otherwise the $T_i$ would be nonseparating, and that $M_g \setminus N(A)$ is a pair of solid tori bounded by $T_1$ and $T_2$.  The core $c$ of $A$ cannot bound a disk in either solid torus since $A$ is essential, so each solid torus admits a Seifert fibration in which $c$ is a fiber, and these glue together to give a Seifert fibration of $M_g$, which does not exist.
\end{proof}

We now prove the claims about $M_g$ which were assumed in the proof of Theorem~\ref{thm:M_g-hyperbolic}.

\begin{lemma} \label{lem:mg-irreducible}
Each $M_g$ is irreducible and is not Seifert fibered.
\end{lemma}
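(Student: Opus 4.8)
The plan is to treat $M_g$ as the complement $Y_g\setminus N(K_g)$ of the surgery dual $K_g$ inside the graph manifold $Y_g=Y(T_{2,3},T_{2,2g+1})$, and to derive both properties from the facts about $Y_g$ already in hand together with the list of fillings from Theorem~\ref{thm:four-fillings}. First I would record what I need about $Y_g$. It is irreducible: it is the union $E_{2,3}\cup_{T^2}E_{2,2g+1}$ of two torus knot exteriors, each irreducible with incompressible boundary, glued along the incompressible splicing torus, so any embedded $2$-sphere can be isotoped off that torus into one of the two irreducible pieces and hence bounds a ball. Moreover $\pi_1(Y_g)$ is infinite (it contains an incompressible torus, as noted in the proof of Proposition~\ref{prop:splicing-facts}), while $H_1(Y_g)$ is finite cyclic of order $|12(2g+1)-1|$; in particular $Y_g$ is none of $S^3$, a lens space, or $S^1\times S^2$.

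The key preliminary step is to show that $K_g$ is not contained in any ball of $Y_g$. Suppose instead that $K_g\subset B$ for some ball $B$, and consider the lens space filling $M_g(r_2)=L(2g+5,2)$ supplied by Theorem~\ref{thm:four-fillings}. Since the surgery is supported inside $B$, the sphere $S=\partial B$ survives as a $2$-sphere in this lens space, which is irreducible, so $S$ bounds a ball there. The outer piece $Y_g\setminus\inr(B)$ is untouched by the surgery and is not a ball (as $Y_g\neq S^3$), so it must be the inner, surgered ball that is a genuine ball, forcing $M_g(r_2)=Y_g$; this contradicts $L(2g+5,2)$ having finite fundamental group while $\pi_1(Y_g)$ is infinite. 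Irreducibility of $M_g$ then follows at once: an essential $2$-sphere $S\subset M_g$ bounds a ball $B$ in the irreducible manifold $Y_g$, and the connected set $N(K_g)$ lies on one side of $S$; if it lies outside $B$ then $B\subset M_g$ and $S$ is inessential, while if it lies inside then $K_g\subset B$, contradicting the previous step. Hence no essential sphere exists.

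For the Seifert fibered claim I would argue by contradiction through the fillings. If $M_g$ were Seifert fibered, then since $\partial M_g$ is a single torus its base orbifold is a disk, M\"obius band, or annulus with cone points; the annulus case is excluded because it would produce two boundary tori. If $M_g$ were a solid torus then every filling would be a lens space, $S^3$, or $S^1\times S^2$, but $Y_g=M_g(r_T)$ is none of these. Otherwise $M_g$ is Seifert fibered with one torus boundary and is not a solid torus, and the standard classification of such fillings says that $M_g(\gamma)$ is Seifert fibered over a closed base orbifold for every slope $\gamma$ except the regular-fiber slope, where it is reducible. Filling along $r_T$ yields $Y_g$, which is neither Seifert fibered (Proposition~\ref{prop:splicing-facts}) nor reducible (by the irreducibility just established), so both alternatives are impossible and $M_g$ cannot be Seifert fibered.

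The main obstacle here is not a deep difficulty but the careful invocation of standard $3$-manifold facts: that surgery on a knot contained in a ball is supported inside that ball (the engine of the second paragraph) and the fiber-versus-non-fiber dichotomy for Dehn fillings of a Seifert fibered space with a single torus boundary. Both are classical, so the real care goes into ordering the argument so that irreducibility of $M_g$ is available \emph{before} it is used to rule out the reducible-filling case, and into checking the small exceptional manifolds ($S^3$, lens spaces, $S^1\times S^2$) against the invariants of $Y_g$.
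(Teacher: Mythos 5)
Your proof is correct, but it takes a genuinely different route from the paper's on both claims. For irreducibility, the paper never works inside $Y_g$ at all: it first notes that every closed embedded surface in $M_g$ separates (a nonseparating surface would survive into $M_g(r_2)=L(2g+5,2)$ and give a non-torsion class in $H_2$ of a lens space), so an essential sphere would split off a closed summand $Y\not\cong S^3$ that is a common connected summand of the two lens spaces $M_g(r_{11})$ and $M_g(r_{13})$, which is absurd. You instead adopt the dual-knot viewpoint: you prove $Y_g$ irreducible (two irreducible torus-knot exteriors glued along an incompressible torus), show $K_g$ lies in no ball of $Y_g$ --- else the surgery would be supported in that ball and $M_g(r_2)\cong Y_g$, contradicting that $\pi_1(L(2g+5,2))$ is finite while $\pi_1(Y_g)$ is infinite --- and deduce irreducibility of the complement. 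Both arguments are sound; the paper's needs only the lens space fillings and elementary homology, yours needs one lens space filling plus structural facts about $Y_g$ which are in any case available from Proposition~\ref{prop:splicing-facts}. For the Seifert fibered claim, the paper fixes the fiber slope, splits on orientability of the base, handles non-orientable bases by arithmetic on the orders of the three lens space fillings (via Proposition~\ref{prop:possible-bases} and Corollary~\ref{cor:lens-space-rp2}), and invokes Heil's theorem only for orientable bases. Your solid-torus/non-solid-torus dichotomy disposes of both cases using the single filling $Y_g$, which is cleaner, but it silently relies on the fiber-slope classification (non-fiber fillings inherit the fibration; the fiber filling of a non-solid-torus is reducible) for \emph{all} base orbifolds, including non-orientable ones. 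That statement is true and is exactly what \cite{heil} provides, so you should cite it; note the paper structured its proof precisely so as to apply Heil only in the orientable case.

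Two small blemishes, neither a gap. First, your list of possible base orbifolds (``disk, M\"obius band, or annulus with cone points'') is incomplete --- a once-punctured torus, for instance, is also possible --- but your argument never actually uses this list, since the solid-torus dichotomy covers every base. Second, the fiber-slope filling is in general a connected sum of lens spaces and copies of $S^1\times S^2$; calling it ``reducible'' is correct only because you have already excluded the solid torus, which is exactly the case in which that connected sum can degenerate to a single (irreducible) lens space. It would be worth making that dependence explicit.
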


\begin{proof}
For the irreducibility, we observe that any closed, oriented, embedded surface $S \subset M_g$ must be separating: if it is nonseparating, then it remains nonseparating in any Dehn filling of $M_g$ and hence determines a non-torsion element of $H_2(M_g(r_2)) = H_2(L(2g+5,2)) = 0$, contradiction.  So if $M_g$ contains an embedded sphere $S$ which does not bound a ball, then we can decompose along $S$ to write $M_g = Y \# N$, where $Y \not\cong S^3$ is closed and $\partial N = T^2$, and then $Y$ is a connected summand of both $M_g(r_{11}) = L(11g+3,11)$ and $M_g(r_{13}) = L(13g+8,13)$, again a contradiction.

Suppose now that $M_g$ is Seifert fibered, and let $r$ be the fiber slope.  The Seifert fibering extends across any Dehn filling of $M_g$ except possibly $M_g(r)$, so if the base is non-orientable then at least two of the three lens space fillings admit Seifert fibrations with non-orientable base.  But then this base must be $\RP^2$ by Proposition~\ref{prop:possible-bases}, and Corollary~\ref{cor:lens-space-rp2} says that any such lens space has order a multiple of 4; this is never the case for $L(2g+5,2)$, and it cannot hold for both $L(11g+3,11)$ and $L(13g+8,13)$ since the difference $2g+5$ in their orders is odd.  Thus the base orbifold must be orientable.  In this case, Heil \cite{heil} showed that $M_g(r)$ must be a connected sum of lens spaces and copies of $S^1 \times S^2$.  But $M_g(r_T) = Y_g$ is neither Seifert fibered nor a connected sum of such manifolds, so $M_g$ must not be Seifert fibered after all.
\end{proof}

The following lemma will be useful in proving that $M_g$ is atoroidal.

\begin{lemma} \label{lem:11-13-torus-surgery}
If $L(11g+3,11)$ and $L(13g+8,13)$ both arise as surgeries on the same torus knot for some fixed $g \geq 1$, then that torus knot is $T_{\pm 3,17}$.
\end{lemma}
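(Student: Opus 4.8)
The plan is to combine the homological data of the two surgeries with Moser's classification of Dehn surgeries on torus knots. Write the torus knot as $T_{p,q}$ with $2 \le p < q$ and $\gcd(p,q)=1$, and suppose the two lens spaces arise, up to orientation, as $S^3_{m_1/n_1}(T_{p,q})$ and $S^3_{m_2/n_2}(T_{p,q})$ with $\gcd(m_i,n_i)=1$. Since the $m/n$-surgery on any knot has $H_1$ of order $|m|$, matching homology orders gives $|m_1| = 11g+3$ and $|m_2| = 13g+8$, so $m_1 = \eta_1(11g+3)$ and $m_2 = \eta_2(13g+8)$ for signs $\eta_1,\eta_2 \in \{\pm 1\}$. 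By Moser's classification, a surgery $S^3_{m/n}(T_{p,q})$ is a lens space exactly when $|m - npq| = 1$, so each $m_i$ satisfies $m_i \equiv \epsilon_i \pmod{pq}$ for some $\epsilon_i \in \{\pm 1\}$. The degenerate possibilities are excluded: $n_i \neq 0$ because $|m_i| \ge 14 > 1$, and the reducible case $m_i = n_i pq$ cannot occur because $|m_i - n_i pq| = 1$.

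First I would eliminate the variable $g$ by taking the integer combination $13m_1 \pm 11m_2$, choosing the sign so as to cancel the $g$-terms. Since $13(11g+3) = 143g+39$ and $11(13g+8) = 143g+88$, exactly one of $13m_1 - 11m_2$ and $13m_1 + 11m_2$ has vanishing $g$-coefficient, and that one evaluates to the constant $\pm 49$. Reducing the same combination modulo $pq$ via $m_i \equiv \epsilon_i$ gives a residue equal to $13\epsilon_1 \pm 11\epsilon_2$, which lies in $\{\pm 2,\pm 24\}$. Hence $pq$ divides the nonzero integer $\pm 49 - r$ for some $r \in \{\pm 2,\pm 24\}$, whose absolute value is one of $51$, $47$, $25$, or $73$.

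Finally I would invoke the number-theoretic fact that among $25,47,51,73$ only $51 = 3\cdot 17$ can be written as a product of two coprime integers each at least $2$: indeed $47$ and $73$ are prime, and $25 = 5^2$ is not a product of coprime factors $\geq 2$. Since $T_{p,q}$ is a nontrivial torus knot, $pq$ must be such a product, so the divisibility $pq \mid N$ with $|N| \in \{25,47,51,73\}$ forces $pq = 51$ and therefore $\{p,q\} = \{3,17\}$. Allowing for the mirror image, which reverses orientation and replaces $T_{p,q}$ by $T_{p,-q}$, yields precisely $T_{\pm 3,17}$. The main obstacle here is purely bookkeeping: one must track the four sign combinations for $(\eta_1,\eta_2)$ and for $(\epsilon_1,\epsilon_2)$ and confirm that in every case the surviving $g$-independent quantity has absolute value in the same four-element set $\{25,47,51,73\}$, so that no choice of orientations or of the residues $\epsilon_i$ escapes the conclusion $pq = 51$.
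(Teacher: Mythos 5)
Your proof is correct and follows essentially the same route as the paper's: both invoke Moser's classification to get $m_i \equiv \pm 1 \pmod{pq}$, eliminate $g$ via the combination $13m_1 \pm 11m_2$ (the paper writes it as $11(13g+8+\epsilon)-13(11g+3+\delta) = 49+11\epsilon-13\delta$), land in the same set $\{25,47,51,73\}$, and finish with the observation that only $51$ is divisible by a product of two coprime factors each at least $2$. The only difference is that you track the orientation signs $\eta_i$ explicitly, which the paper sidesteps by working directly with the (unsigned) orders of $H_1$.
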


\begin{proof}
Let $T_{a,b}$ be the torus knot in question.  Its lens space surgeries have slopes $ab + \frac{1}{n}$ for nonzero integers $n$, hence first homology of order $|nab+1| \equiv \pm1 \pmod{|ab|}$.  In particular, $ab$ must divide both $11g+3+\delta$ and $13g+8+\epsilon$ for some $\delta,\epsilon \in \{\pm1\}$, and hence it divides
\[ 11(13g+8+\epsilon) - 13(11g+3+\delta) = 49 + 11\epsilon - 13\delta \in \{25, 47, 51, 73 \}. \]
But $a$ and $b$ are coprime and different from $\pm1$, so their product can only divide $51 = 3\cdot 17$ since the rest of these are prime powers, and then $|a|$ and $|b|$ are equal to $3$ and $17$ in some order.
\end{proof}

Suppose now that $M_g$ contains an incompressible torus $S$ which is not boundary-parallel.  Again, $S$ must be separating, so we write
\[ M_g = N \cup_S P \]
where $\partial N = S$ and $\partial P = S \sqcup \partial M_g$.  We note that $P$ must also be irreducible and $\partial$-irreducible.  For the irreducibility, any essential sphere must again be separating, so it gives us a decomposition $P = P_1 \# P_2$ where $S \subset \partial P_1$, and we have $M_g = (N \cup_S P_1) \# P_2$.  Either $P_2$ is closed and not $S^3$, or its boundary is $\partial M_g$ and then $N \cup_S P_1$ is closed and not $S^3$ since it has an incompressible torus.  Either way, every filling of $M_g$ has a connected summand in common, but this is clearly false for $M_g(r_{11})$ and $M_g(r_{13})$, so $P$ is irreducible is claimed.  For the $\partial$-irreducibility, we argue as in Theorem~\ref{thm:M_g-hyperbolic} that since $P$ is irreducible, any compressing disk would force the corresponding component of $\partial P$ to bound a solid torus in $P$, which is absurd.

\begin{proposition} \label{prop:not-cabled}
The manifold $P$ is not a cable space.
\end{proposition}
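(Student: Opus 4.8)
The plan is to assume, for contradiction, that $P$ is a cable space and to show that this forces the companion piece $N$ to be the exterior of a torus knot carrying incompatible lens-space surgeries. Recall that a cable space is a Seifert fibration over the annulus with a single exceptional fiber of some order $q\ge 2$; its two torus boundary components $S$ and $\partial M_g$ share a regular fiber slope $h$. The key local fact I would record first is that a Dehn filling of the outer boundary $\partial M_g$ yields a solid torus precisely when the filling slope meets $h$ once, and that in this case the meridian $\mu$ of the resulting solid torus $W$ satisfies $\Delta(\mu,h)=q$ on $S$ (here $\Delta(\cdot,\cdot)$ denotes the distance, i.e.\ geometric intersection number, of slopes), because the core of $W$ is the exceptional fiber and the regular fibers wind around it $q$ times.

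Next I would feed in the three lens-space fillings $M_g(r_2)=L(2g+5,2)$, $M_g(r_{11})=L(11g+3,11)$, and $M_g(r_{13})=L(13g+8,13)$. A lens space contains no incompressible surface, so in each of these the torus $S$ must compress. Since $N$ is irreducible (being a piece of the irreducible $M_g$ cut along the incompressible $S$) and is not a solid torus — otherwise $S$ would already compress in $M_g$ — the torus $S$ is incompressible in $N$, so the compression must occur on the $P$ side. The argument from the proof of Theorem~\ref{thm:M_g-hyperbolic} (a compressible torus in an irreducible manifold bounds a solid torus) then shows that $P(r_2)$, $P(r_{11})$, $P(r_{13})$ are all solid tori. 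Consequently each $M_g(r_\bullet)=N\cup_S P(r_\bullet)$ is a Dehn filling $N(\mu_\bullet)$ of $N$, and by the local fact the three distinct slopes $\mu_2,\mu_{11},\mu_{13}$ all satisfy $\Delta(\mu_\bullet,h)=q$; hence they differ pairwise by multiples of $h$ and are pairwise at distance at least $q\ge 2$.

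I would then identify $N$. Its boundary $S$ is incompressible, and it admits three cyclic Dehn fillings, two of which are at distance at least two, so the cyclic surgery theorem \cite{cgls} forces $N$ to be Seifert fibered. A Seifert fibered space with a single torus boundary and a lens-space filling is the exterior of a torus knot $T_{a,b}$ in $S^3$, whose lens-space fillings are exactly the lens-space surgeries on $T_{a,b}$; these have first homology of order $\equiv \pm1 \pmod{ab}$. Now $L(11g+3,11)$ and $L(13g+8,13)$ are two such surgeries, so Lemma~\ref{lem:11-13-torus-surgery} gives $T_{a,b}=T_{\pm3,17}$ and $ab=51$. But the pair $L(2g+5,2)$, $L(11g+3,11)$ forces $ab$ to divide $11(2g+5+\delta)-2(11g+3+\epsilon)=49+11\delta-2\epsilon\in\{36,40,58,62\}$ for some $\delta,\epsilon\in\{\pm1\}$, and $51$ divides none of these. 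This contradiction completes the argument.

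The main obstacle will be the identification step: turning ``$P(r_\bullet)$ is a solid torus'' into the clean statement that $N$ is a torus knot exterior in $S^3$ to which Lemma~\ref{lem:11-13-torus-surgery} literally applies. I expect to need care in confirming the incompressibility of $S$ in $N$ and the irreducibility of $N$ so that the cyclic surgery theorem is applicable, and in ruling out the degenerate Seifert structures on $N$ (over the disk with two non-coprime exceptional fibers, or over the M\"obius band) that do not arise from a torus knot in $S^3$. Once $N$ is known to be a torus knot exterior, the final numerical step is routine, given Lemma~\ref{lem:11-13-torus-surgery} and the congruence constraints in its proof.
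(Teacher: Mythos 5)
There is a genuine gap, and it occurs at the pivotal step: the claim that $P(r_2)$, $P(r_{11})$, $P(r_{13})$ are all solid tori. Your justification (the compression of $S$ happens on the $P$ side, a compressible torus in an irreducible manifold bounds a solid torus, and $N$ is not a solid torus) overlooks the possibility that one of the three slopes is the cabling slope $r_c$, i.e.\ your fiber slope $h$ on $\partial M_g$. By \cite[Lemma~7.2]{gordon}, $P(r_c) = (S^1\times D^2)\,\#\,L(q,p)$: here $S$ still compresses on the $P$ side, $P(r_c)$ is \emph{not} a solid torus, and $M_g(r_c) = \bigl(N\cup_S (S^1\times D^2)\bigr)\#\,L(q,p)$ is still a lens space whenever $N\cup_S(S^1\times D^2)\cong S^3$. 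In that filling $S$ bounds a solid torus on neither side of an irreducible lens space, which also shows that the unqualified statement ``a compressible torus in an irreducible manifold bounds a solid torus'' is false (the torus can lie in a ball, bounding a knot exterior on one side and a punctured connected sum on the other); the cabling slope realizes exactly this failure mode. Worse, this is not an avoidable corner case: the paper shows that one of the three slopes \emph{must} equal $r_c$ (otherwise $r_c,r_2,r_{11},r_{13}$ would be four slopes at pairwise distance one), and in fact $r_c=r_2$. So under the cable-space hypothesis your conclusion is false, and your endgame collapses with it: once $M_g(r_2)$ is no longer a Dehn filling of $N$, only the two surgeries $L(11g+3,11)$ and $L(13g+8,13)$ remain on $N$; Lemma~\ref{lem:11-13-torus-surgery} then yields $N = E_{\pm3,17}$, which is not by itself absurd (for $g\equiv 23\pmod{51}$ both orders are $\equiv 1\pmod{51}$), and your congruence involving $L(2g+5,2)$ is unavailable. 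This is precisely why the paper, after invoking Gordon's classification and concluding that $N$ is a knot exterior in $S^3$ with two lens space surgeries of non-consecutive orders (hence a torus knot exterior by \cite{cgls}), must use the fourth, toroidal slope $r_T$: Moser's theorem \cite{moser} and the uniqueness of the incompressible torus in $Y_g$ (Proposition~\ref{prop:splicing-facts}) force $N$ to be $E_{2,3}$ or $E_{2,2g+1}$, contradicting Lemma~\ref{lem:11-13-torus-surgery}. Your proposal never uses $r_T$, and some appeal to it (or to the structure of $Y_g$) appears unavoidable.

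A second gap is the one you flag but defer: ``a Seifert fibered space with a single torus boundary and a lens-space filling is the exterior of a torus knot'' is false, and ruling out the exceptional fibrations is not routine. A Seifert fibration over the disk with two exceptional fibers of non-coprime orders (the simplest being the twisted $I$-bundle over the Klein bottle, with two fibers of order $2$) has infinitely many lens space fillings, namely all slopes at distance one from the fiber slope, and these lie at arbitrarily large pairwise distances, so the cyclic surgery theorem step cannot distinguish them from torus knot exteriors. Moreover, if the two fiber orders are divisible by $d$ then every lens space filling has order divisible by $d$; since $11(2g+5)-2(11g+3) = 13(2g+5)-2(13g+8) = 49$, the value $d=7$ is consistent with all three orders $2g+5$, $11g+3$, $13g+8$ whenever $g\equiv 1 \pmod 7$. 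Hence no argument using only the orders of $H_1$ (the tool in your final paragraph) can close this case; one needs finer lens space invariants, or, as in the paper, a route that first exhibits $N$ as a knot exterior in $S^3$ so that the cyclic surgery theorem for knots applies directly.
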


\begin{proof}
Suppose that $P$ is the complement of a $(p,q)$-cable in $S^1\times D^2$, with $S = S^1 \times \partial D^2$ and $\partial M_g$ the boundary of a neighborhood of the cable.  Let $r_c = pq$ denote the slope of the cabling annulus.  Then the fillings $P(r)$ are classified by \cite[Lemma~7.2]{gordon}: we have
\[ P(r) = \begin{cases} S^1\times D^2 \# L(q,p) & r=r_c \\ S^1\times D^2 & \Delta(r,r_c)=1, \end{cases} \]
and $P(r)$ is a Seifert fibered space with incompressible boundary if $\Delta(r,r_c) > 1$.

This last case cannot happen for any $r \in \{r_2,r_{11},r_{13}\}$, because if the torus $S$ is incompressible in both $N$ and $P(r)$ then it remains so in the lens space $M_g(r)$, which is atoroidal.  Then one of them must be $r_c$, because otherwise the four slopes $r_c$, $r_2$, $r_{11}$, $r_{13}$ all have pairwise distance 1 and this is impossible.  The filling
\[ M_g(r_c) = N \cup_S P(r_c) = N \cup_S (S^1\times D^2 \# L(q,p)) \]
is a lens space, so the corresponding $N \cup_S (S^1\times D^2)$ must be $S^3$, and in particular $N$ is the complement of a knot $K$ in $S^3$.

The knot $K$ has two nontrivial lens space surgeries, since $P(r) = S^1\times D^2$ for whichever two of $r_2$, $r_{11}$, and $r_{13}$ differ from $r_c$.  If $K$ is not a torus knot, then the cyclic surgery theorem \cite{cgls} says that the slopes of these surgeries must be consecutive integers.  But the orders of these lens spaces are among
\[ 2g+5, \quad 11g+3, \quad 13g+8, \]
and no two of these are consecutive integers, so $K$ must be a torus knot.

We now consider the toroidal Dehn filling
\[ Y_g = M_g(r_T) = N \cup_S P(r_T). \]
If $\Delta(r_c,r_T) = 1$ then $P(r_T) = S^1\times D^2$ and so $Y_g$ is a Dehn filling of $N$, contradicting the fact that $Y_g$ is not surgery on a torus knot by \cite{moser}.  We must therefore have $\Delta(r_c,r_T) \geq 2$, so then $r_c = r_2$ and $P(r_T)$ is Seifert fibered with incompressible boundary $S$ which remains incompressible in $Y_g = N \cup_S P(r_T)$.  Then $S$ must be the unique incompressible torus in $Y_g$ up to isotopy, so $N$ is either $E_{2,3}$ or $E_{2,2g+1}$.  But $M_g(r_{11}) = L(11g+3,11)$ and $M_g(r_{13}) = L(13g+8,13)$ are both Dehn fillings of $N$, so Lemma~\ref{lem:11-13-torus-surgery} says that $N$ is actually $E_{\pm3,17}$ and we have a contradiction.
\end{proof}

\begin{proposition} \label{prop:M_g-atoroidal}
Every incompressible torus $S \subset M_g$ is boundary-parallel.
\end{proposition}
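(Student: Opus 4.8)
The plan is to complete the contradiction already set up: assuming $S$ is incompressible but not boundary-parallel, we have the splitting $M_g = N \cup_S P$ with $P$ irreducible, $\partial$-irreducible, and, by Proposition~\ref{prop:not-cabled}, not a cable space. A compressing disk for $S$ lying in $N$ or in $P$ would compress $S$ in $M_g$, so $S$ is incompressible in both $N$ and $P$; I will derive a contradiction by showing that the three lens-space fillings nevertheless force $P$ to be a cable space.

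First I would analyze the fillings along the lens-space slopes. For each $r \in \{r_2,r_{11},r_{13}\}$ the manifold $M_g(r) = N \cup_S P(r)$ is a lens space of order at least $7$, hence irreducible and atoroidal, so the torus $S$ compresses in $M_g(r)$. Since $S$ remains incompressible in the unchanged piece $N$, this compression takes place in $P(r)$, and thus the single boundary torus $S = \partial P(r)$ is compressible. Because $M_g(r)$ is prime, any reducing sphere of $P(r)$ would split off a closed connected summand of $M_g(r)$; hence either $P(r)$ is irreducible --- and therefore a solid torus, being irreducible with compressible torus boundary --- or $P(r)$ is the connected sum of a solid torus with a lens space, in which case $N \cup_S (\text{solid torus}) = S^3$ and $N$ is a nontrivial knot exterior in $S^3$.

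In the principal case, where each $P(r)$ is a solid torus, I would set $V = P(r_2)$ and regard $P$ as the exterior of a knot $k \subset V$, with $\partial M_g = \partial N(k)$, $S = \partial V$, and $r_2$ the meridian of $k$; note that $k$ is not contained in a ball, for otherwise a meridian disk of $V$ pushed off that ball would compress $S$ in $P$. The three fillings are lens spaces of distinct orders $2g+5$, $11g+3$, $13g+8$, so $r_{11}$ and $r_{13}$ are distinct from $r_2$ and from each other, and they present $k$ with two distinct nontrivial solid-torus surgeries. By Gabai's theorem on surgery on knots in solid tori, such a $k$ is then a $0$- or $1$-bridge braid; as a $1$-bridge (Berge--Gabai) braid admits at most one nontrivial solid-torus surgery, $k$ must be a $0$-bridge braid, i.e.\ a torus knot in $V$, whence $P = V \setminus N(k)$ is a cable space, contradicting Proposition~\ref{prop:not-cabled}.

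To dispose of the exceptional case, in which some $P(r)$ is reducible and $N$ is a knot exterior in $S^3$, I would pass to the graph-manifold filling $Y_g = M_g(r_T)$. If $S$ stays incompressible in $Y_g$, then by Proposition~\ref{prop:splicing-facts} it is isotopic to the unique incompressible torus of $Y_g = E_{2,3} \cup_{T^2} E_{2,2g+1}$, so $N$ is one of the torus-knot exteriors $E_{2,3}$, $E_{2,2g+1}$; the fillings $L(11g+3,11)$ and $L(13g+8,13)$ then realize two lens-space surgeries on the same torus knot $T_{2,b}$, which by Lemma~\ref{lem:11-13-torus-surgery} would have to be $T_{\pm 3,17}$, an impossibility since $b \in \{3, 2g+1\}$. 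If instead $S$ compresses in $Y_g$, one obtains a fourth solid-torus filling $P(r_T)$ and reduces again, via the knot-in-solid-torus analysis above, to the cable-space contradiction. The main obstacle I anticipate is the solid-torus surgery step --- invoking the Gabai--Berge classification of knots in a solid torus to turn two distinct solid-torus surgeries into a cabling --- together with the bookkeeping needed to separate the irreducible and reducible fillings $P(r)$ and to verify that the reducible case always lands in the torus-knot situation governed by Lemma~\ref{lem:11-13-torus-surgery}.
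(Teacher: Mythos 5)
Your setup and your ``principal case'' are fine, and that case is handled by a genuinely different argument from the paper's: when $P(r_2),P(r_{11}),P(r_{13})$ are all solid tori, you view $P$ as the exterior of a knot in the solid torus $V=P(r_2)$ with two nontrivial solid-torus surgeries and appeal to Gabai and Berge--Gabai, whereas the paper never touches knots-in-solid-tori theory (it first shows, via \cite[Theorem~2.0.1]{cgls} and Proposition~\ref{prop:not-cabled}, that $S$ is the splice torus of $Y_g$, and then kills this configuration with Lemma~\ref{lem:11-13-torus-surgery}). Two caveats even here: the claim that a $1$-bridge braid admits at most one nontrivial solid-torus surgery is a substantial citation (it is part of the Berge/Gabai classification, not a formal consequence of Gabai's theorem on surgery in solid tori), and a $0$-bridge braid can be the core of $V$, whose exterior is $T^2\times I$ rather than a cable space --- that subcase is excluded because $S$ is not boundary-parallel, not by Proposition~\ref{prop:not-cabled}.

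The genuine gap is in your exceptional case, and it is exactly the configuration for which the paper needs Scharlemann's theorem \cite{scharlemann}. In that case you assert that $L(11g+3,11)$ and $L(13g+8,13)$ are both Dehn fillings of $N$, but this holds only when the reducible filling is $P(r_2)$. If instead $P(r_{11})$ (say) is the reducible one, then $P(r_{11})\cong (S^1\times D^2)\,\#\,L(11g+3,11)$, the corresponding filling of $N$ is $S^3$, and the only lens spaces known to be fillings of $N$ are $L(2g+5,2)$ and $L(13g+8,13)$. Lemma~\ref{lem:11-13-torus-surgery} says nothing about this pair, and its divisibility trick fails for it: $ab$ would divide $13(2g+5+\delta)-2(13g+8+\epsilon)=49+13\delta-2\epsilon\in\{34,38,60,64\}$, and since $6$ divides $60$ the trefoil is not excluded by orders alone (for $g\equiv 3\pmod 6$ both $2g+5$ and $13g+8$ are $\equiv\pm1\pmod 6$), so closing this sub-case would require Moser's classification of the actual lens spaces, which you never invoke. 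Your second sub-case has the same defect: when $S$ compresses in $Y_g$ you assert that $P(r_T)$ is a solid torus, but since $Y_g$ is irreducible the alternative $P(r_T)\cong(S^1\times D^2)\,\#\,Y_g$ (with the filling of $N$ equal to $S^3$) is precisely what your exceptional case permits; moreover you give no argument that at most one of the three lens-space fillings is reducible (uniqueness of the $S^3$ filling of $N$ only forces the compression slopes on $S$ to coincide, not a contradiction). In either event you no longer have the three solid-torus fillings that your Gabai argument needs, and the proof stalls. The paper resolves exactly these leftover configurations by applying Scharlemann's theorem to $M=P(r_j)$ (which has compressible boundary), $M'=P(r_i)$ (reducible, not a solid torus), and $K$ the core of the $r_j$-filling, concluding that $K$ is cabled and contradicting Proposition~\ref{prop:not-cabled}; alternatively, your compressing sub-case could be eliminated at the outset, as the paper does, from $\Delta(r_2,r_T)=2$ and \cite[Theorem~2.0.1]{cgls}. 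Without some such ingredient, the exceptional case of your proof is open.
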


\begin{proof}
Let $S$ be an incompressible torus which is not boundary-parallel.  We first claim that $S$ remains incompressible inside $M_g(r_T) = Y_g$.  If not, then it also compresses in the atoroidal $M_g(r_2) = L(2g+5,2)$, and $\Delta(r_2,r_T)=2$, so \cite[Theorem~2.0.1]{cgls} asserts that $S$ and $\partial M_g$ cobound a cable space, contradicting Proposition~\ref{prop:not-cabled}.  Since $S$ is incompressible in $Y_g$, Proposition~\ref{prop:splicing-facts} tells us that $S$ is isotopic to the torus which separates $Y_g$ into two torus knot exteriors.  Thus $N$ and $P(r_T)$ are $E_{2,3}$ and $E_{2,2g+1}$ in some order, each with incompressible boundary $S$.

The torus $S \subset P$ must compress in each of the fillings $P(r_2)$, $P(r_{11})$, and $P(r_{13})$, because otherwise it would remain incompressible in the corresponding lens space fillings of $M_g$.  Then a neighborhood of $\partial P(r_2)$ together with a compressing disk is $(S^1\times D^2) \setminus B^3$, and likewise for $P(r_{11})$ and $P(r_{13})$, so we can write
\[ P(r_k) = (S^1\times D^2) \# Y_k, \qquad k=2,11,13 \]
for some closed manifolds $Y_k$.  In particular, the lens space $M_g(r_k)$ is the connected sum of $Y_k$ with some Dehn filling of the torus knot complement $N$, and so one of $Y_k$ and this Dehn filling are $S^3$ while the other is the lens space.

Lemma~\ref{lem:11-13-torus-surgery} tells us that $Y_{11}$ and $Y_{13}$ cannot both be $S^3$, so we fix $i$ and $j$ to be $11$ and $13$ in some order so that $Y_i$ is a lens space, hence $P(r_i) = (S^1\times D^2) \# Y_i$ is reducible.  We apply the main theorem of \cite{scharlemann}, with (in the notation of \cite{scharlemann}) $M = P(r_j)$ and $K\subset M$ the core of the $r_j$-filling and with $M' = P(r_i)$.   Then $M \setminus N(K) = P$ is irreducible and $\partial$-irreducible, and $\partial M$ compresses in $M = (S^1\times D^2) \# Y_j$ while $M'$ is neither $S^1\times D^2$ nor irreducible, so we conclude that $K$ is cabled.  If we replace $S$ with the boundary of the cable space then this also contradicts Proposition~\ref{prop:not-cabled}.  We conclude that $S$ cannot exist.
\end{proof}

This completes the proof of Theorem~\ref{thm:M_g-hyperbolic}. \hfill\qedsymbol

\begin{remark} \label{rem:infinitely-many-example}
We can identify a tangle $T$ in Figure~\ref{fig:knot-Lg} as the complement of the box with $g$ half-twists.  The branched double cover $Y=\dcover(T)$ is another 3-manifold with torus boundary, one which does not depend on $g$, and filling it in with the branched double cover of the $g$ half-twist tangle which was removed exhibits $Y(T_{2,3},T_{2,2g+1})$ as a Dehn filling of $Y$.  Thus $Y$ has infinitely many $SU(2)$-cyclic Dehn fillings.

We note that $Y$ is neither hyperbolic nor Seifert fibered.  Indeed, in the first case all but finitely many Dehn fillings would be hyperbolic, and in the second case all but at most one filling would be Seifert fibered.  However, none of the $Y(T_{2,3},T_{2,2g+1})$ are hyperbolic since they have incompressible tori, and they are not Seifert fibered either by Proposition~\ref{prop:splicing-facts}.
\end{remark}

\begin{example} \label{ex:four-sporadic}
By examining \cite{dunfield-exceptional} we have found that the hyperbolic manifolds $m259$ and $s337$ also have four $SU(2)$-cyclic Dehn fillings each, namely
\begin{align*}
m259(0,1) &= L(9,2), & m259(1,0) &= L(45,19), \\
m259(1,1) &= S^2((3,2),(3,2),(3,2)), & m259(2,1) &= \pm Y(T_{2,5},T_{2,5}); \\[1em]
s337(0,1) &= L(3,1), & s337(1,0) &= L(69,19), \\
s337(-1,1) &= S^2((3,1),(3,2),(3,5)), & s337(-2,1) &= \pm Y(T_{2,5},T_{2,-7}).
\end{align*}
It would not be surprising to see that these fall into an infinite family of examples, just as in Theorem~\ref{thm:four-fillings}.
\end{example}

\bibliographystyle{myalpha}
\bibliography{References}

\end{document}